\documentclass[a4paper,11pt]{article}
\usepackage[utf8]{inputenc}
\usepackage[hmargin=3cm, vmargin=3.5cm,marginparwidth=2cm]{geometry}
\usepackage{amssymb,amsthm,amsmath,amstext}
\usepackage{mathtools}
\usepackage{graphicx}
\usepackage[shortlabels]{enumitem}
\usepackage[dvipsnames]{xcolor}
\usepackage{marginnote}
\usepackage{stmaryrd}
\usepackage{constants}
\usepackage{verbatim}
\usepackage{pgfplots}
\usepackage{tikz}
\usepackage{hyperref}

\pgfplotsset{compat=1.16}
\usetikzlibrary{decorations, arrows, arrows.meta, hobby, plotmarks, backgrounds}
\hypersetup{
  hidelinks,
  breaklinks=true,
  bookmarksnumbered=true,
  plainpages=false,
  unicode=true,
  pdfauthor={M. Birkner, A. Callegaro, J. Černý},
  pdftitle={Linear spreading speed in non-monotone population models}
}

\numberwithin{equation}{section}
\setlist[enumerate]{label=(\alph*)}
\theoremstyle{plain}
\newtheorem{theorem}{Theorem}[section]
\newtheorem{corollary}[theorem]{Corollary}
\newtheorem{lemma}[theorem]{Lemma}
\newtheorem{proposition}[theorem]{Proposition}

\theoremstyle{definition}
\newtheorem{definition}[theorem]{Definition}
\newtheorem{assumption}{Assumption}
\newtheorem{remark}[theorem]{Remark}
\newtheorem*{claim*}{Claim}
\newtheorem{example}[theorem]{Example}

\newcommand{\E}{\mathbb{E}}
\renewcommand{\L}{\mathbb{L}}
\newcommand{\N}{\mathbb{N}}
\renewcommand{\P}{\mathbb{P}}
\newcommand{\Q}{\mathbb{Q}}
\newcommand{\R}{\mathbb{R}}
\newcommand{\Z}{\mathbb{Z}}
\newcommand{\ind}{\mathbf{1}}
\newcommand{\D}{\mathop{}\!\mathrm{d}}

\newcommand{\Block}{\mathtt{Block}}

\newcommand{\bl}{\mathrm{bl}}

\newcommand{\dens}{\mathrm{dens}}

\newcommand{\ext}{\mathrm{ext}}
\newcommand{\hit}{\mathrm{hit}}

\newcommand{\loc}{{\mathrm{loc}}}

\newcommand{\sspace}{{\mathrm{space}}}
\newcommand{\ttime}{{\mathrm{time}}}
\newcommand{\tv}{\mathrm{TV}}

\DeclareMathOperator{\Supp}{Supp}

\DeclareMathOperator{\argmin}{argmin}
\DeclareMathOperator{\argmax}{argmax}

\DeclarePairedDelimiter{\abs}{\lvert}{\rvert}
\DeclarePairedDelimiter{\ceil}{\lceil}{\rceil}
\DeclarePairedDelimiter{\floor}{\lfloor}{\rfloor}
\DeclarePairedDelimiter{\bbracket}{\llbracket}{\rrbracket}
\DeclarePairedDelimiter{\bbfloor}{\llfloor}{\rrfloor}
\DeclarePairedDelimiter{\bbceil}{\llceil}{\rrceil}

\makeatletter
\newcommand{\subjclass}[2][2020]{%
  \let\@oldtitle\@title%
  \gdef\@title{\@oldtitle\footnotetext{#1
      \emph{Mathematics subject classification:} #2}}%
}
\newcommand{\keywords}[1]{%
  \let\@@oldtitle\@title%
  \gdef\@title{\@@oldtitle\footnotetext{\emph{Key words and phrases:} #1.}}%
}
\makeatother

\title{Linear spreading speed in non-monotone population models}
\author{Matthias Birkner, Alice Callegaro, Jiří Černý }
\subjclass{60K35; 92D95.}
\keywords{Non-monotone interacting particle system,
  linear spreading, shape theorem, branching annihilating random walk}

\begin{document}

\maketitle

\begin{abstract}
  For a broad class of discrete-time, finite-range interacting particle
  systems on $\Z$, we establish a linear spreading speed and a
  one-dimensional shape theorem on the event of survival, without assuming
  monotonicity or attractiveness of the dynamics. The method requires
  that the system admits a coupling with supercritical oriented percolation on
  a coarse-grained lattice. The central technical step is an approximate
  subadditivity property for the hitting times, obtained through
  a `shifted coupling' that compensates for the absence of monotonicity. As a
  concrete application, we show that a discrete-time branching annihilating random walk fits into this framework, and consequently
  exhibits a linear spreading speed.
\end{abstract}

\tableofcontents

\section{Introduction and main results}
\label{sect:Intro+main}

In this paper we develop a method to establish a linear spreading speed and a
one-dimensional shape theorem for a broad class of discrete-time interacting
particle systems on $\Z$, without assuming monotonicity or attractiveness.
The method applies whenever the dynamics satisfies a set of general
assumptions, the most important being the existence, after
coarse-graining, of a coupling with supercritical oriented percolation. At
its core, the proof relies on a classical subadditive argument, but
establishing the required (approximate) subadditivity is far from
straightforward in the absence of monotonicity. The key ingredient is a
new coupling that allows us to compare the growth of two copies of the dynamics
without invoking stochastic ordering. As a concrete application, we show that
the branching annihilating random walk introduced in~\cite{BCC24} fits into
this framework.

The study of shape theorems for random growth models has a long history, going
back at least to the study of the Eden
model for tumour growth~\cite{E61}. The existence of a linear speed was later
proved for the wider class of Richardson models~\cite{Ric73}, including the
Eden model, using subadditivity methods introduced
in~\cite{HammersleyWelsh:1965} for first-passage percolation. Since then,
Kingman's subadditive ergodic theorem~\cite{Kingman:AOP1} has become a
central tool in proving shape results for random growth models.

In Richardson models, extinction cannot occur and the occupied set is
non-decreasing in time. This is the case also in the frog model, for which a
shape theorem was obtained in~\cite{RS04} (continuous time) and
in~\cite{AMPY02} (discrete time), and for the set of visited sites in the
branching random walk in random environment from~\cite{CP07}.

For models in which extinction is possible, the shape result is proved
conditioning on survival. The possibility of extinction poses technical
difficulties because hitting times could be infinite, and conditioning on
survival can break independence. For example, in the contact process
(see~\cite{DG82}, and later~\cite{GM12} for random environment),
this is resolved by considering only \emph{essential} hitting times, that is,
hitting times such that the process started with a single particle at the hit
site does not go extinct. In these cases, a shape result can be obtained with
an approximate version of subadditivity, in which an error term is added to
Kingman's theorem (we refer to the lemma in~\cite{Hammersley:AOP2}, page 674).

Another degree of difficulty arises when the process under consideration is
not monotone. In this case, there is no obvious way to compare the future
evolution of the dynamics with the process started only from a subset of the
current configuration, and thus sub-additivity arguments fail. Results on existence of a shape without using (literal) sub-additivity have been proved for a specific spatial infection model~\cite{KS08}. Recently,~\cite{MP24}
considered a non-monotone branching random walk with competition and proved
that, under small competition effects, the asymptotic shape is close to that
of the model without competition, in terms of Hausdorff distance. The
existence of a (linear) speed, however, remains open for their model.

The methods developed in this paper address precisely this gap, for a large
class of models satisfying a set of five
general assumptions that give us sufficient control over the dynamics. They
are inspired by, but not identical to, the set presented in \cite{BCD16} (see
  Section 3 therein) to understand the dynamics of ancestral lineages in
population models. Besides the existence of the linear spreading speed, these
assumptions imply the existence of a unique non-trivial ergodic invariant
measure.
Later in the paper we will show that they hold
for the model of the branching annihilating random walk that we studied in
\cite{BCC24}.
Our assumptions are rather natural, and we believe that they can be verified
for many models of interest, including a wider class of
probabilistic cellular automata and other population models with different
types of regulation and noise. We postpone a more detailed discussion to
Section~\ref{sec:outlook}.

\medskip

We are now in a position to present our setting, assumptions, and main results.
We consider
a discrete-time process $\eta=(\eta_n)_{n \ge 0}$ with values in $E^{\Z}$
where $(E, \mathcal E)$ is a measurable space. We assume that $E$ contains a
special state~$0$, representing the empty site. (When $\eta_n(x) \neq 0$, we
  sometimes say that the site $(x,n)$ contains a particle). We use
$B_r(x) \coloneq \{ y \in \Z : \abs{x-y} \le r\}$ to denote the ball around
$x\in \mathbb Z$ with radius $r\in \mathbb N_0$ and write
$V_r \coloneq \abs{ B_r(0)} = 2r+1$ for its volume.

Our first assumption requires that the process $\eta$ evolves from a fixed
starting configuration as a stochastic flow on $E^\Z$ whose dynamics is
`local' in time and space. Moreover, the process should have a \emph{finite
  range}, and the empty state should be (locally and globally) a fixed point
of the dynamics.

\begin{assumption}
  \label{ass:flow}
  Let $U = \{ U(y,j): y \in \Z, \, j \in \N \}$ be a family of i.i.d.~random
  variables uniformly distributed in $[0,1]$. There exists a
  measurable function $F: E^{\Z}\times [0,1]^{\Z} \to E^\Z$ so that
  for every $n \in \N_0$, given $\eta_n$,
  \begin{equation}\label{eqn:flow1}
    \eta_{n+1} = F(\eta_n,U(\cdot, n+1)).
  \end{equation}
  $F$ is translation invariant, that is, denoting by $\theta_x$ the
  translation acting naturally on $\mathbb Z$, $E^\Z$ and
  $[0,1]^\Z$,
  \begin{equation*}
    F(\theta_x \eta , \theta_x U) = \theta_x F(\eta ,U)
    \quad\text{for all $x\in \mathbb Z$, $\eta \in E^\Z$, and
      $U\in [0,1]^{\mathbb Z}$.}
  \end{equation*}
  Moreover, there exists $R\in \mathbb N$ so that, for every $x \in \Z$,
  $\eta_{n+1}(x)$ depends only on $U(y,n+1)$ and $\eta_n(y)$ with
  $y\in B_{R}(x)$. Finally, for $x\in \mathbb Z$, if
  $\eta_n(y)=0$ for all $y\in B_{R}(x)$, then
  $\eta_{n+1}(x)=0$.
\end{assumption}

As a consequence of~\eqref{eqn:flow1}, for every $n,m \in \N_0$ with $n>m$,
given $\eta_m=\zeta$, $\eta_n$ can be represented as
\begin{equation}
  \label{eqn:eta_U_flow}
  \eta_{n}=F_{m,n}(\zeta, U),
\end{equation}
where the function
$F_{m,n}(\zeta,\cdot):  [0,1]^{\Z \times \N} \to E^{\Z}$ is measurable
with respect to the $\sigma $-algebra
\begin{equation}
  \label{eqn:calU}
  \mathcal{U}_{m,n}\coloneq \sigma\big(\{U(y,j): y \in \Z, m < j \le n\}\big).
\end{equation}
The locality of the dynamics implies that $\eta_n(x)$ depends on
$\eta_m(y)$ only if $y\in B_{(n-m)R}(x)$.

Our second assumption will allow us to compare $\eta $ with oriented
percolation, using a coarse-graining construction. We start with introducing
some notation. For $L_\bl, T_\bl \in \mathbb N$ we set
\begin{equation}
  \label{eqn:L_bl}
  \L_\sspace \coloneq L_\bl \Z,
  \qquad \L_\ttime \coloneq T_\bl \N_0,
\end{equation}
and define the coarse-grained lattice
\begin{equation}
  \L \coloneq \L_\sspace \times \L_\ttime.
\end{equation}
For $(z,t) \in \mathbb{L}$, we set
\begin{equation}
  \label{eqn:block}
  \Block(z,t)  \coloneq
  \big\{(x,n) \in \Z \times \N_0 :
    x\in B_{K L_\bl}(z), \, t < n \le t+T_\bl \big\},
\end{equation}
where $K$ is an integer constant that can be fixed arbitrarily, but must
satisfy
\begin{equation}
  \label{eqn:K}
  K \ge 4.
\end{equation}
Note that for $z' \in \L_\sspace$ with $\abs{z-z'} \le 2KL_\bl$, the blocks
$\Block(z,t)$ and $\Block(z',t)$ overlap spatially. On the other hand, blocks
at distinct times $t \neq t'$ in $\L_\ttime$ are disjoint, see
Figure~\ref{fig:blocka}.

We will use the following notation to impose certain local properties of
$\eta_n$ and $U$. For any $G^\eta\subset E^{B_{K L_\bl}(0)}$,
$x\in \mathbb Z$ and $\eta \in E^\Z$ we write
\begin{equation}
  \label{eqn:Geta}
  \eta \in G_\loc^\eta(x)
  \quad \text{if} \quad \{\eta(x+y):y\in B_{K L_\bl}(0)\} \in G^\eta
\end{equation}
(that is, $\eta_n$---translated so that $x$ becomes the origin, and
  restricted to $B_{K L_\bl}(0)$---is in~$G^\eta$). Similarly, for any
$G^U\subset [0,1]^{\Block(0,0)}$, $(x,t) \in \mathbb L$, and
$U\in [0,1]^{\Z\times \N}$ we write
\begin{equation}
  \label{eqn:GU}
  U \in G_\loc^U(x,t)
  \quad \text{if} \quad
  \{{U(x+y,t+s):(y,s)\in \Block(0,0)}\} \in G^U.
\end{equation}

\begin{assumption}
  \label{ass:coarse}
  There are $L_\bl, T_\bl \in \mathbb N$ and $K$ as in
  \eqref{eqn:K} such that
  \begin{equation}
    \label{eqn:L}
    L_\bl \ge 2R,
  \end{equation}
  and (`good') sets
  \begin{equation}
    \label{eqn:goodsets}
    G^\eta\subset E^{B_{K L_\bl}(0)}\setminus \{ 0 \}^{B_{K L_\bl}(0)}
    \quad \text{and}
    \quad G^U \subseteq [0,1]^{\Block(0,0)}
  \end{equation}
  with the following properties holding for every $(z,t)\in \L$:
  \begin{enumerate}
    \item (uniform local survival) There is $\varepsilon_2>0$ such that if
    $\eta_n\in G_\loc^\eta (z)$, $n\in \mathbb N_0$, then
    \begin{equation}
      \label{eqn:locgood_to1}
      \P(\eta_{n+1}(x) \neq 0 \mid \eta_n)
      \ge \varepsilon_2 \quad \text{ for } \abs{x-z} \leq L_\bl.
    \end{equation}

    \item (spreading of `goodness')  If $U\in G_\loc^U(z,t)$
    and $\zeta \in G_\loc^\eta(z)$, then
    \begin{equation*}
      \begin{split}
        F_{t,t+T_\bl}(\zeta,U) &\in G_\loc^\eta(z')
        \text{ for every }
        z' \in \{z-L_\bl, z, z+L_\bl\}.
      \end{split}
    \end{equation*}

    \item (production of local agreement) If $U\in G_\loc^U(z,t)$
    and $\zeta, \zeta' \in G_\loc^\eta(z)$, then
    \begin{equation*}
      F_{t,t+T_\bl}(\zeta,U)(x) =F_{t,t+T_\bl}(\zeta',U)(x)
      \quad \text{for all }
      x \in B_{2L_\bl}(z'),  z' \in \{z-L_\bl, z, z+L_\bl\}.
    \end{equation*}

    \item (coupling preservation)
    If $U\in G_\loc^U(z,t)$, $\zeta ,\zeta' \in G_\loc^\eta (z)$ and
    $\zeta(x)=\zeta'(x)$ for all $x \in B_{2 L_\bl}(z)$, then
    \begin{equation*}
      F_{t,t+n}(\zeta,U)(x) = F_{t,t+n}(\zeta',U)(x)
      \quad\text{for all } x \in B_{L_\bl/2}(z)
      \text{ and }  n=1,\dots,T_\bl.
    \end{equation*}
  \end{enumerate}
\end{assumption}

We call the block $\Block(z,t)$ \emph{good} if $U\in G^U_\loc(z,t)$.
Similarly, we say that the block $\Block(z,t)$ (or its base $B_{K L_\bl}(z)$)
is \emph{well-started} if $\eta_t \in G^\eta_\loc(z)$. For an illustration of
Assumption~\ref{ass:coarse}, see Figure~\ref{fig:blocka}.

\begin{figure}[ht]
  \centering
  \usetikzlibrary{decorations.pathreplacing}

\begin{tikzpicture}[xscale=0.45,yscale=0.7]
  %background
  \draw[step=0.1cm,black!5](-13,-.3) grid (13,4.4);
  %good blocks
  \fill [solid,gray!40] (-10,0) rectangle (10,2);
  %agreement region
  \draw[blue] (-1.25,2) rectangle (1.25,0);
  \foreach \x in {0, 0.1, ..., 2} {
    \draw[blue] (-1.25,\x) rectangle (1.25,\x);
  }
  % grid and labels
  \draw[xstep=2.5cm,ystep=2] (-13,-.3) grid (13,4.4);
  \node[below] at (-10,-0.15){\small $z-K L_\bl$};
  \node[below] at (-2.5,-0.15){\small $z-L_\bl$};
  \node[below] at (0,-0.2){\small $z$};
  \node[below] at (2.5,-0.15){\small $z+L_\bl$};
  \node[below] at (10,-0.15){\small $z+K L_\bl$};
  \node[right] at (13,0){\small $t$};
  \node[right] at (13,2){\small $t+T_\bl$};
  % well-started configurations
  \draw[thick,decorate,decoration={brace,amplitude=4pt,mirror,raise=1pt}]
  (-10,0) -- (10,0);
  \draw[thick,decorate,decoration={brace,amplitude=4pt,raise=2pt}]
  (-12.5,2) -- (7.5,2);
  \draw[thick,decorate,decoration={brace,amplitude=4pt,mirror,raise=2pt}]
  (-10,2) -- (10,2);
  \draw[thick,decorate,decoration={brace,amplitude=4pt,raise=2pt}]
  (-7.5,2) -- (12.5,2);
  %agreement
  \foreach \x in {-5, -4.5, -4, ..., 5} {
    \draw plot[mark=x, mark size=4pt, mark options={thick,blue}]
    coordinates {(\x,0)} ;
  }
  \foreach \x in {-7.5, -7, ..., 7.5} {
    \draw plot[mark=x, mark size=4pt, mark options={thick,red}]
    coordinates {(\x,2)} ;
  }
\end{tikzpicture}
  \caption{Illustration for Assumption~\ref{ass:coarse} with $K=4$. The thin
    black lines correspond to the lattice $\L$. The dark grey rectangle is
    $\Block(z,t)$ which is assumed to be good and well started here.
    Well-started blocks are depicted by thick black braces. The fact that
    $\Block(z,t)$ is well started and good implies that there are three
    well-started blocks at level $t+T_\bl$, by part~(b) of the assumption. In
    the region marked with red crosses, the configuration $\eta_{t+T_\bl}$
    depends only on $U$'s in this block, but not on
    $\eta_t \in G^\eta_\loc(z)$, by part~(c). If two configurations in
    $G^\eta_\loc(z)$ agree in the region marked with blue crosses, then they
    agree in the blue rectangle, by part~(d).}
  \label{fig:blocka}
\end{figure}

The next assumption implies that good blocks are typical.

\begin{assumption}
  \label{ass:domination}
  Define
  \begin{equation}
    \label{eqn:Yzt}
    Y(z,t) \coloneq \ind_{U\in G^U_\loc(z,t)}, \qquad (z,t) \in \L.
  \end{equation}
  There is a coupling of $(Y(z,t))_{(z,t) \in \mathbb{L}}$ with an
  i.i.d.~Bernoulli field
  $(\tilde Y(z,t))_{(z,t) \in \mathbb{L}}$ satisfying
  \begin{enumerate}
    \item $Y(z,t) \ge \tilde Y(z,t)$ for every $(z,t) \in \L$,
    \item $p \coloneq \mathbb P(\tilde Y(0,0) = 1) > p_c$,
    where $p_c$ is the critical parameter of an oriented site percolation
    process on $\L$ (where $(z,t),(z',t')\in \mathbb L$ are connected by an
      oriented edge iff $t'=t+T_\bl$ and $\abs{z-z'} \le L_\bl$).
  \end{enumerate}
\end{assumption}

\begin{remark}
  \label{rem:LSS}
  By construction, the random variables $Y$ have a finite range of
  dependence. Therefore, a field $\tilde Y$ satisfying
  Assumption~\ref{ass:domination}(a) always exists by
  \cite{liggett1997domination}. For (b) to hold, the probability that a block
  is good, $\P(Y(0,0)=1)$, should be made large enough.
\end{remark}

The fourth assumption states that any non-zero starting configuration
produces, with uniformly positive probability, a well-started block locally
and relatively quickly. Note that this assumption implies a certain
`irreducibility' of the process.

\begin{assumption}
  \label{ass:irreducibility}
  There are $\varepsilon_4 >0$ and $t_4 \in \L_\ttime$ such that, for every
  $\zeta \in E^\Z $  satisfying
  $\{x:\zeta(x)\neq 0\}\cap B_{L_\bl}(0) \neq \emptyset $,
  \begin{equation}
    \mathbb P\big(\eta_{t_4}\in G^\eta_\loc(0) \mid \eta_0 = \zeta  \big)
    \ge \varepsilon_4.
  \end{equation}
\end{assumption}

The final assumption is the most technical one. It requires that any two
configurations that have no particles to the right of the origin and a
particle in $B_{L_\bl}(0)$ have a positive probability of becoming identical
near their right edge, with a well-started block close to that edge.

\begin{assumption}
  \label{ass:coupling}
  There are $\varepsilon_5 >0$ and $t_5 \in \L_\ttime$ such that for every
  $\zeta^1$, $\zeta^2$ such that $\zeta^i(z) =0$ for all $z >0$ and
  $\{x:\zeta^i(x)\neq 0\}\cap B_{L_\bl}(0)\neq \emptyset$, $i=1,2$, there is a
  coupling (denoted $\mathbb P_{\zeta^1,\zeta^2}$) of processes $\eta^1$,
  $\eta^2$ started from $\zeta^1$ and $\zeta^2$ respectively, so that
  \begin{equation}
    \label{eqn:asscoupling}
    \mathbb P_{\zeta^1,\zeta^2}\big(\eta^1_{t_5} \in G_\loc^\eta(0),
      \eta^1_{t_5}(z) = \eta^2_{t_5}(z)
      \ \forall z\ge - K L_\bl\big)\ge \varepsilon_5.
  \end{equation}
\end{assumption}

\begin{remark}
  \label{rmk:Ass5_Efinite}
  Depending on the specific model, Assumption~\ref{ass:coupling} can be easy
  or difficult to check, mainly due to the fact that it requires the
  configurations $\eta^1$, $\eta^2$ to be \emph{exactly equal} over a certain
  space interval. Its verification is straightforward if the state space $E$
  is finite and the model is `irreducible enough'.  In this case the event
  in \eqref{eqn:asscoupling} happens with (tiny, but) positive probability just
  by chance while running the two copies independently. This is the case in
  the example application we consider in Section~\ref{sec:barw}. On the other
  hand, if $E$ is not finite, or even uncountable, proving
  Assumption~\ref{ass:coupling} may require some non-trivial contraction
  arguments; see also the discussion in Section~\ref{sec:outlook}.
\end{remark}

In what follows, we write $\mathbb P_\zeta $ for the distribution of the
process conditioned on $\eta_0 = \zeta \in E^\Z$.
Setting
\begin{equation*}
  \tau_\ext  \coloneq \inf \{ n \in \N : \eta_n\equiv 0\},
\end{equation*}
we use
\begin{equation}
  \label{eqn:barPzeta}
  \bar \P_\zeta (\,\cdot\,)
  \coloneq \P_\zeta (\,\cdot\mid \tau_\ext = \infty)
\end{equation}
to denote the law of the process conditioned on survival. Note that
  Assumptions~\ref{ass:flow}--\ref{ass:irreducibility} imply that
  survival has positive probability for any non-trivial initial condition
  (see Remark~\ref{rem:survival} below). Finally, we set
\begin{equation*}
  \tau_\hit(x)  \coloneq
  \inf \{ n \in \N: \exists z\ge x \text{ such that }\eta_n(z) \neq 0 \}
\end{equation*}
to be the first time there is an occupied site to the right of $x$.
We can now state our main result.

\begin{theorem}
  \label{thm:shape1}
  Under Assumptions \ref{ass:flow}--\ref{ass:coupling}, there exists
  $v\in (0,\infty)$ such that for every $\zeta \in E^\Z$ satisfying
  $\zeta (0)\neq 0$ and $\zeta(y)=0$ for $y > 0$,
  \begin{equation*}
    \lim_{x \to \infty } \frac{x}{\tau_\hit(x)}
    = v, \qquad \bar\P_\zeta \text{-almost surely.}
  \end{equation*}
\end{theorem}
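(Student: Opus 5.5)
The proof will go through an approximately subadditive family of \emph{essential} hitting times together with Kingman's theorem in the form allowing an error term (the lemma of~\cite{Hammersley:AOP2}). The monotonicity that is missing in this setting will be replaced by Assumption~\ref{ass:coupling}.

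\emph{Oriented percolation skeleton.} By Assumptions~\ref{ass:coarse} and~\ref{ass:domination}, a well-started good block at $(z,t)$ produces well-started blocks at $(z\pm L_\bl,t+T_\bl)$ and $(z,t+T_\bl)$. Hence every open path of the dominated percolation $\tilde Y$ that starts from a well-started block consists of well-started blocks. Supercritical oriented percolation has a right edge $r_t$ with $r_t/t\to\alpha>0$ on survival, and it has exponential tails for finite clusters. From this I will obtain:
\begin{itemize}
\item[(i)] a linear upper bound on $\tau_\hit$ on the event that the skeleton survives; the finite range $R$ gives the bound $x/\tau_\hit(x)\le R$ in the other direction;
\item[(ii)] that, on $\{\tau_\ext=\infty\}$, the process almost surely eventually contains a well-started block whose percolation cluster is infinite. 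For this I use Assumption~\ref{ass:irreducibility} and repeated trials with geometric waiting, and I also note that such a block is produced near the right edge within bounded time with positive probability.
\end{itemize}
Property (ii) lets me reduce the theorem to starting configurations that have a well-started block at the right edge and an infinite cluster.

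\emph{Essential hitting times and shifted coupling.} For $x\in L_\bl\N$, I define $\sigma(x)$ as the first time $t\in\L_\ttime$ at which, near position $x$ at the right edge, there is a well-started block at $(x',t)$ with $x'\ge x$ whose skeleton cluster is infinite. I then compare $\sigma(x+y)$ with $\sigma(x)+\sigma'(y)$, where $\sigma'$ is computed for a fresh copy started at time $\sigma(x)$ from a reference configuration $\zeta^*$ shifted to $x'$. The difficulty is that the true configuration $\eta_{\sigma(x)}$ is not $\zeta^*$, and without monotonicity it cannot be dominated by it. This is where the shifted coupling enters. Using Assumption~\ref{ass:coupling} (applied after translating so that the rightmost occupied site is at the origin; the actual configuration is zero to the right of the edge), in each trial of length $t_5$ the two copies agree on $[x'-KL_\bl,\infty)$ with probability at least $\varepsilon_5$ and are well-started. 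Once they agree on a well-started block, parts (c) and (d) of Assumption~\ref{ass:coarse} propagate this agreement along the good blocks of the infinite cluster. The consequence is that the right edges of the two processes coincide from then on, up to a bounded shift.

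The trials fail only a geometric number of times, and each failure costs $O(t_5)$ in time and $O(Rt_5)$ in space. This gives $\sigma(x+y)\le\sigma(x)+\sigma'\circ\theta(y)+W_{x,y}$, where $\sigma'\circ\theta(y)$ has the same law as $\sigma(y)$ and is independent of $\mathcal U_{0,\sigma(x)}$, and where $W_{x,y}$ has uniformly exponential tails. The stationarity and moment conditions also follow: $E\sigma(x)\le Cx$ by (i), and the variables are shift-covariant by the translation invariance in Assumption~\ref{ass:flow}.

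\emph{Conclusion.} The Hammersley approximate-subadditivity lemma then gives $\sigma(x)/x\to 1/v$ almost surely, with $v\in(0,\infty)$ because $\alpha>0$ and by finite range. Next, $\tau_\hit(x)\le\sigma(x)$. In the other direction, the particle that first hits $x$ reaches an essential block within a time of order $o(x)$, by a Borel--Cantelli argument over the geometric trials in (ii), so $\tau_\hit(x)\ge\sigma(x-o(x))-o(x)$. Finally, the shifted coupling with $\zeta^*$ shows that the limit does not depend on the initial condition $\zeta$.

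The main obstacle is the shifted-coupling step. The agreement produced by Assumption~\ref{ass:coupling} must be maintained forever, which requires that the agreeing region lies on an infinite good cluster. The point in space where the fresh copy is restarted must also be a stopping location so that independence is preserved. I would attempt this first by restarting at the right edge of $\eta_{\sigma(x)}$, and by choosing $\zeta^*$ to be that same edge configuration, randomized over a fixed-law family.
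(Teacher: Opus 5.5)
There is a genuine gap, and it sits exactly at the step you flag as the main obstacle. Your proposed way around it does not work.

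Your time $\sigma(x)$ is defined through the event that the $\tilde Y$-cluster of the selected block is infinite, so it is not a stopping time. Conditionally on $\{\sigma(x)=t\}$, the driving noise after $t$ is biased by an event of probability $q=\P(\tilde\tau^0_\ext=\infty)$, which is not close to $1$. Unless you use a restart scheme as in~\cite{GM12}, it is also biased by earlier candidate clusters that die after $t$.

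For the contact process this is harmless. By attractiveness, the process restarted from the essential site is a subprocess driven by the same noise, and conditioning on its survival produces exactly the law of the fresh copy. Here, however, your fresh copy is tied to the original through the coupling of Assumption~\ref{ass:coupling}, which is an arbitrary coupling of \emph{unconditioned} processes. Once the original's noise is conditioned on its own skeleton cluster being infinite, nothing guarantees that $\sigma'(y)$ is independent of the past and distributed as $\sigma(y)$. Because $q$ is not close to $1$, a total-variation correction is not available either.

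Restarting from the actual edge configuration, ``randomized over a fixed-law family'', is circular. The law of $\eta_{\sigma(x)}$ seen from its edge depends on $x$, and a stationary law of the configuration seen from the tip is essentially the open question of Section~\ref{sec:outlook}.

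There is a second problem: nothing forces the offset $\Delta=M^1-M^2$ at the successful trial to be nonnegative. If it is negative, the original stays behind forever. You would then have to control the fresh copy's hitting-time increment over the distance $\abs{\Delta}$. You would also have to turn lattice-aligned essential blocks of one copy into aligned essential blocks of the other when $\Delta\notin L_\bl\Z$. So the claim that each failed trial costs only $O(t_5)$ in time does not follow.

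The paper sidesteps both issues by never introducing essential times. It works with the stopping time $\tau_\hit(x)$. Through the backward-in-time argument of Lemma~\ref{lem:neverwalkalone} and Proposition~\ref{prop:hitandblock}, it shows that at that time there are, with high probability, order $\rho$ well-started blocks within $\rho^2$ blocks of $x$.

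The fresh copy starts from a single particle at the \emph{deterministic} site $x$ under $\bar\P_{\theta_x\zeta}$, so independence and the correct law of $\bar\sigma_\hit(y)$ come for free. The original is given a head start $\ell=\lfloor x^{1/3}\rfloor$ by running an extra $\lceil C(\rho^2+\ell)\rceil$ steps, and this delay is the error $V(x,y)$. Proposition~\ref{prop:couplefronts} shows that the shifted coupling succeeds before the fresh copy can close the gap, so $M^1_{T+k}\ge M^2_k$ for all $k$.

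Since both configurations carry many well-started blocks, conditioning on survival costs only $Ce^{-c\rho}$ in total variation (Lemma~\ref{lem:shiftedconditioned}). The resulting error satisfies only $\E^\Q[V(x,y)^2]\le C(x+y)$, not exponential tails, but that is all Hammersley's lemma requires.
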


This theorem and its proof imply the following (one-dimensional) shape
theorem for the occupied set. There, in addition, we require that
$F$ is reflection invariant, that is, $F(r\eta ,r U) = r F(\eta ,U)$ for all
$\eta $ and $U$, where $r$ is the reflection around the origin acting
naturally on all relevant spaces.

\begin{theorem}
  \label{thm:real_shape}
  If Assumptions~\ref{ass:flow}--\ref{ass:coupling} hold and $F$ is
  reflection invariant, then there is $c<\infty$ such that for every
  $\varepsilon >0$ and $\zeta$
  which is non-zero only at the origin, there is a
  $\bar \P_\zeta $-a.s.~finite random time $T$ such that,
  $\bar \P_\zeta $-a.s.~for all $n\ge T$
  (with  $v\in (0,\infty)$ as in Theorem~\ref{thm:shape1}):
  \begin{enumerate}
    \item
    $\{x\in \mathbb Z :\eta_n(x) \neq 0\}
    \subset [-(1+\varepsilon) nv,(1+\varepsilon)nv]$,

    \item
    $B_{c\log n}(y) \cap \{x\in \mathbb Z: \eta_n(x) \neq 0\} \neq \emptyset$
    for every $y\in  [-(1-\varepsilon) nv,(1-\varepsilon)nv]\cap \mathbb Z$.
  \end{enumerate}
\end{theorem}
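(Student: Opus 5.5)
We deduce Theorem~\ref{thm:real_shape} from Theorem~\ref{thm:shape1}, its reflected version, and the oriented percolation comparison of Assumptions~\ref{ass:coarse}--\ref{ass:domination}.

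\emph{Step 1: part~(a), the outer bound.} Let $r_n \coloneq \max\{x:\eta_n(x)\neq 0\}$. By the finite range in Assumption~\ref{ass:flow}, $r_n \le nR$ is finite. Theorem~\ref{thm:shape1} applies to $\zeta=\delta_0$ and gives $\tau_\hit(x)/x\to 1/v$. On the event $\{r_n\ge (1+\varepsilon)nv\}$ we have $\tau_\hit(\lceil(1+\varepsilon)nv\rceil)\le n$. If this happened for infinitely many $n$, it would force
\[
\liminf_{x\to\infty}\frac{\tau_\hit(x)}{x}\le \frac{1}{(1+\varepsilon)v},
\]
which is a contradiction. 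Hence $r_n<(1+\varepsilon)nv$ eventually. By reflection invariance of $F$ and of $\delta_0$, the reflected process has the same law, so the same statement holds for the leftmost particle $\ell_n$. This proves~(a).

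\emph{Step 2: part~(b), the inner filling.} We need a renewal-type statement: $\bar\P$-a.s., for all large $n$ there are \emph{well-started blocks} in the coarse lattice, lying on an infinite open oriented-percolation path, with their rightmost positions at least $(1-\varepsilon/2)nv$. To construct them we use the fact that $\tau_\hit(x)\approx x/v$. At the hitting time, Assumption~\ref{ass:irreducibility} (or Assumption~\ref{ass:coupling}) produces a well-started block near $x$ within time $t_4$ with probability at least $\varepsilon_4$. Repeating this trial along a geometric number of attempts, each costing $O(1)$ time and space, shows that the right edge of the ``well-started and percolating'' region moves at speed $v$. The paper's proof of Theorem~\ref{thm:shape1} presumably already constructs essential hitting times of this kind, and I would reuse them. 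Do the same on the left by reflection.

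Once we have two well-started blocks at positions near $\pm(1-\varepsilon/2)nv$ at times close to $n$, connected to infinity, we use Assumption~\ref{ass:domination} and Assumption~\ref{ass:coarse}(b). Well-startedness propagates along open paths, and the standard $1$D supercritical oriented percolation facts apply: the open cluster of the percolating region between the left and right edges of two infinite open clusters is dense. Concretely, the probability that a space-time box of side $m$ contains no wet site of the percolation, given it lies between the edges of the infinite cluster, decays like $e^{-cm}$. Assumption~\ref{ass:coarse}(a) then converts a well-started block into an actual particle within $B_{L_\bl}(z)$ with conditional probability at least $\varepsilon_2$, and independent fresh $U$'s make this repeatable.

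\emph{Step 3: Borel--Cantelli.} For $y$ in the interval with $|y|\le(1-\varepsilon)nv$, the probability that $B_{c\log n}(y)$ is empty is bounded by
\[
e^{-c' c\log n} + (1-\varepsilon_2)^{c\log n/L_\bl},
\]
using the blocks in a window of length $c\log n$ around $y$ and the percolation density. Union over $O(n)$ values of $y$ and sum over $n$; choosing $c$ large makes the sum finite. Conditioning on survival costs only a constant factor, since $\P(\tau_\ext=\infty)>0$ by the remark on survival.

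The main obstacle is Step~2: showing that the percolating well-started region keeps pace with $\tau_\hit$, that is, that the front of the occupied set is within $o(n)$ of the front of the coarse-grained infinite cluster. I would first try to extract this from the proof of Theorem~\ref{thm:shape1}, which must use such essential hitting times together with the coupling of Assumption~\ref{ass:coupling}. Failing that, I would argue via restart attempts and show that failures cost only $O(\log n)$ extra distance.
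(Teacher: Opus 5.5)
Your Step~1 is correct and is exactly the paper's argument for part~(a). Step~2, which you flag yourself as the main obstacle, has a genuine gap: you never show that well-started blocks fill the \emph{bulk} $[-(1-\varepsilon)nv,(1-\varepsilon)nv]$ at time $n$ with a failure probability summable in $n$. There are two concrete problems.

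\emph{The structural claim does not give the bulk.} Two infinite clusters seeded near $\pm(1-\varepsilon/2)nv$ at times close to $n$ are still small at time $n$, so they say nothing about the interior. The classical edge-coupling fact gives density of a \emph{single} cluster between its \emph{own} edges. A single cluster seeded early moves only at the edge speed of the $\tilde Y$-percolation, and this speed need not reach $(1-\varepsilon)v$: the paper only knows $v\ge \tilde v L_\bl/T_\bl$. What you actually need is reseeding at essentially every position $x$, at its own hitting time, with an error summable over $x$.

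\emph{Your seeding mechanism does not give such an error cheaply.} Assumption~\ref{ass:irreducibility} yields a single well-started block with probability $\varepsilon_4$, and its cluster survives only with probability $q<1$. So every attempt fails with probability bounded away from zero. Turning this into a summable bound needs a restart scheme. That scheme must cope with the fact that survival of a seeded cluster is not determined at a stopping time, and it must relocate a particle near the front after each failure. None of this is done. Note also that the paper's proof of Theorem~\ref{thm:shape1} contains no essential hitting times you could reuse.

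The missing idea is to use Proposition~\ref{prop:hitandblock}, in the grid-aligned form of Remark~\ref{rem:hitandblock}, with $\rho=x^{1/4}$. This gives, at time $\bbfloor{\tau_\hit(x)}$, of order $x^{1/4}$ well-started blocks within distance $\sqrt{x}$ of $x$, except with probability $Ce^{-cx^{1/4}}$.
\begin{itemize}
\item By Lemma~\ref{lem:percolation}(a),(d), as in Lemma~\ref{lem:shift_blocks}, the cluster these blocks seed survives. From time $\bbfloor{\tau_\hit(x)}+C'\sqrt{x}$ on, it dominates the stationary cluster $\tilde Z^\nu$ at $\bbracket{x}$. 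This is the estimate \eqref{eq:hitthendominate} for the domination time $\tau_{\mathrm{dom}}(x)$.
\item Apply Borel--Cantelli over $x$, and combine with $\tau_\hit(x)\le (1+\varepsilon/2)\abs{x}/v$ from Theorem~\ref{thm:shape1} (and its reflection). This gives $\tau_{\mathrm{dom}}(x)\le t$ for all $\abs{x}\le(1-\varepsilon)vt$ and all large $t\in\L_\ttime$.
\item The bulk then inherits, by Borel--Cantelli in $t$, the absence of gaps of length $C\log t$ in $\tilde Z^\nu_t$.
\end{itemize}

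Your detour through Assumption~\ref{ass:coarse}(a) is unnecessary. Since $G^\eta$ excludes the zero configuration, $\bbracket{x}\in W(\eta_n)$ already places a particle within $KL_\bl$ of $\bbracket{x}$. What you still need is the passage from $n\in\L_\ttime$ to all $n$, which follows from the finite range.
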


\begin{remark}
  We could alternatively define $\tau_\hit(x)$ to be the first time at which
  there is a particle exactly at $x$, or in $B_R(x)$. Our choice slightly
  simplifies the proof of Theorem~\ref{thm:perturbed_subadditivity}. On the
  other hand, our techniques (see Lemma~\ref{lem:incoupled} and the proof of
    Lemma~\ref{lem:hittingtime} below) imply that the difference between the
  various possible definitions of $\tau_\hit(x)$ has (at most) stretched exponential
  tails. As a consequence, Theorem~\ref{thm:shape1} holds for all these
  definitions.
\end{remark}

The main tool in proving Theorems~\ref{thm:shape1} and~\ref{thm:real_shape} is
the following approximate subadditivity property for the hitting times.

\begin{theorem}
  \label{thm:perturbed_subadditivity}
  Under Assumptions~\ref{ass:flow}--\ref{ass:coupling}, for every starting
  condition $\zeta $ that is non-zero only at the origin and for every
  $x,y\in \mathbb N$ there is a probability space
  $(\Omega , \mathcal A, \mathbb Q)$ with random variables
  $\bar \tau_\hit^\zeta(x)$, $\bar \tau_\hit^\zeta(x+y)$,
  $\bar \sigma^\zeta_\hit(y)$, and $V(x,y)$ such that
  \begin{enumerate}
    \item $\bar \tau_\hit^\zeta(x)$ has the same distribution as
      $\tau_\hit(x)$ under $\bar\P_\zeta $,
      \\ $\bar \tau_\hit^\zeta(x+y)$ has the same distribution as
      $\tau_\hit(x+y)$ under $\bar\P_\zeta $,
      \\ $\bar \sigma_\hit^{\zeta}(y)$ has the same
      distribution as $\tau_\hit(y)$ under $\bar \P_\zeta$, and is
      independent of $\bar \tau_\hit^{\zeta}(x)$,

    \item $\Q$-a.s.,
    $\bar \tau_\hit^{\zeta }(x+y)
    \le \bar \tau_\hit^{\zeta }(x)+ \bar \sigma_\hit^{\zeta }(y)+V(x,y)$,

    \item there is $C<\infty$ (independent of $x$ and $y$) such that
    $\E^\Q[V(x,y)^2] \le C(x+y)$.
  \end{enumerate}
\end{theorem}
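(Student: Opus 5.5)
We construct everything on one probability space that carries the driving noise of the original process and an independent family of fresh uniforms. Let $\eta$ be the process started from $\zeta$ and driven by $U$, and condition on survival. This gives $\bar\tau^\zeta_\hit(x) := \tau_\hit(x)$ and $\bar\tau^\zeta_\hit(x+y) := \tau_\hit(x+y)$, so the two marginal requirements in (a) hold by definition; only the joint law of the pair is fixed by the coupling. Write $\tau=\tau_\hit(x)$. At time $\tau$ there is a particle at some site $z_0\ge x$. By Assumption~\ref{ass:flow} and the finite range $R$, $z_0\le x+\tau R$, and in fact $z_0 \in [x, x+R]$ because the rightmost particle moves at most $R$ per step.

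Next we build an independent copy $\sigma$ started at the shifted point. Take an independent process $\eta'$ from $\theta_{-z_0}\zeta$, i.e.\ a single particle at $z_0$, driven by fresh noise and conditioned on survival. Set $\bar\sigma^\zeta_\hit(y)$ equal to its hitting time of $z_0+y$ minus its hitting time of $z_0$ (which is $0$). By translation invariance its law is that of $\tau_\hit(y)$ under $\bar\P_\zeta$. It is independent of $\tau$ because the fresh noise is independent and the conditioning on survival of $\eta'$ is independent of everything else. The real task is to relate $\eta$ after time $\tau$ to $\eta'$. Because of non-monotonicity we cannot simply discard particles; this is where the `shifted coupling' enters.

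The coupling in time proceeds through repeated trials. The configuration $\theta_{z_0}\eta_\tau$ has no particles to the right of $0$ beyond distance $R$ and a particle in $B_{L_\bl}(0)$; the same holds for $\eta'$ at time $0$. By Assumption~\ref{ass:coupling}, within time $t_5$ we can couple the two so that, with probability at least $\varepsilon_5$, they agree on $[z_0-KL_\bl,\infty)$ and the common configuration is well started. On failure we retry from the new rightmost particles. Each trial costs time $t_5$ plus a re-hitting delay, and a spatial shift controlled by $R t_5$. After a geometric number of trials, the two right edges agree and sit on a well-started block. From then on we use the same $U$ for both processes, and Assumption~\ref{ass:coarse}(b)--(d) together with Assumption~\ref{ass:domination} show that the agreement propagates along the infinite oriented-percolation cluster of good blocks. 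Hence the right edges stay equal, except on a bad event of stretched-exponentially small probability.

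Failures occur when a trial leaves a process extinct, or when the percolation cluster dies. Here we use survival conditioning: on the survival event, Assumption~\ref{ass:irreducibility} gives a new well-started block within a geometrically distributed time. This yields
\[
\bar\tau(x+y)\le \tau+\bar\sigma(y)+V,
\]
where $V$ collects the total delay of the trials plus the time lag from the spatial shifts. The shifts are at most $O(\text{trials})$ in space; converting them into time requires hitting-time increments over bounded distances, which have exponential tails.

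The main obstacle is the last step. The bound must hold almost surely, and the error must satisfy $\E^{\Q}[V^2]\le C(x+y)$. The $\eta$-process conditioned on survival need not stay coupled forever, because $\eta'$'s cluster may wander. To control this, we would restart the coupling at each decoupling. Decoupling events occur at rate governed by the percolation estimates, so over the time horizon $O(x+y)$ (by linear hitting bounds from oriented percolation) there are $O(x+y)$ opportunities. Each contributes an error with uniform exponential moments, giving the variance bound linear in $x+y$ rather than uniform.
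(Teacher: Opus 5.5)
Your construction has a genuine gap at the point you call the main obstacle, and the repair you propose does not work. First, the premise is off. Once the shifted coupling is installed at a well-started block whose oriented-percolation cluster is infinite, the difference of the two fronts stays constant for all later times; this is the paper's observation \eqref{eqn:orderedforever}. There is no later ``decoupling'' to repair. Second, the repair would not give (c) anyway. A sum of $N\asymp x+y$ nonnegative errors with uniformly bounded moments has second moment of order $N^2$ unless most of the errors vanish, and you give no estimate on how many restarts actually occur. So $\E^\Q[V^2]\le C(x+y)$ is not established.

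The real difficulties come earlier, and your sketch does not handle them.

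(i) You start the trials at $\tau_\hit(x)$, when both copies may consist of a thin tip near $z_0$. During the random-length trial phase nothing prevents $\eta'$ from moving ahead of $\eta$, so the final shift can have either sign. You propose to turn a negative shift into a time lag using ``hitting-time increments over bounded distances with exponential tails''. No such estimate follows from the assumptions. In a non-monotone model the configuration at a hitting time can be a fragile tip. The only available control (Proposition~\ref{prop:hitandblock} plus the percolation comparison) has errors of order $e^{-c\rho+2\log x}$, and it requires first waiting for many well-started blocks.

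(ii) Assumption~\ref{ass:coupling} and the propagation of agreement under shifted identical noise are statements about the \emph{unconditioned} dynamics. From a single particle, $\bar\P$ and $\P$ are not close in total variation, so you cannot simply run these couplings for two processes conditioned on survival. Appealing to Assumption~\ref{ass:irreducibility} does not produce a coupling of the conditioned laws.

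(iii) Independence of $\bar\sigma^\zeta_\hit(y)$ cannot come from ``fresh noise'' if $\eta'$ is then coupled to $\eta$. It has to come from the conditional law of $\eta'$, given the past of $\eta$, being a fixed (translated) $\bar\P$.

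The missing idea is to delay the comparison so that the original process is both robust and ahead. The paper restarts at $T=\tau_\hit(x)+\bbceil{C(\rho^2+\ell)}$, with $\rho=\floor{x^{1/6}}$ and $\ell=\floor{x^{1/3}}$.
\begin{itemize}
\item By Lemma~\ref{lem:delaybounds}, with probability $1-Ce^{-cx^{1/6}}$ the configuration $\eta_T$ has order $\rho$ well-started blocks near $x_{\rho,\ell}$.
\item The copy started at $x$ (not at $z_0$) acquires order $\rho$ well-started blocks within about $\rho^2$ steps, while its front stays at most at $x_{\rho,\ell}-\ell$.
\item Many blocks make survival nearly certain. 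So the conditioned and unconditioned laws are $Ce^{-c\rho}$-close, and the unconditioned shifted coupling transfers (Lemma~\ref{lem:shiftedconditioned}).
\item The lead of size $\ell$ absorbs the whole trial phase of Proposition~\ref{prop:couplefronts}. Hence $M^1_{T+k}\ge M^2_k$ for all $k\ge 0$, and the final shift is automatically nonnegative.
\end{itemize}
Then $V$ equals the deterministic $\bbceil{C(\rho^2+\ell)}=O(x^{1/3})$ on the good event and $\bar\tau_\hit(x+y)$ off it. Part (c) follows from Cauchy--Schwarz and Corollary~\ref{cor:hit_moments}, after reducing to $x\ge y$ so that $e^{-cx^{1/6}}$ beats $(x+y)^2$.
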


Theorem~\ref{thm:shape1} will follow directly from
Theorem~\ref{thm:perturbed_subadditivity} using the classical arguments from
\cite{Hammersley:AOP2, BG80-II}, with some additional steps needed to cover
all initial configurations considered in Theorem~\ref{thm:shape1}.

Theorem~\ref{thm:perturbed_subadditivity} is the central
innovation of this paper. It shows that subadditive arguments,
which are usually rather straightforward in the context of monotone systems,
can also be used for non-monotone models satisfying Assumptions
\ref{ass:flow}--\ref{ass:coupling}.

\paragraph{Organisation of the paper.}

The rest of the paper is organised in two parts. The first one, giving the
proofs of Theorems~\ref{thm:shape1}--\ref{thm:perturbed_subadditivity},
consists of five sections. Section~\ref{sec:percol} reviews some standard
results on oriented percolation, and then explains how these can be used to
control the process $\eta$.  In particular, it gives a first (non-optimal)
lower bound on the speed of spreading of the occupied region.
Section~\ref{sec:liverpool} establishes an intermediate result leading to
Theorem~\ref{thm:perturbed_subadditivity}. It shows that at time
$\tau_\hit(x)$ there are, with high probability, many well-started blocks
close to $x$ (see Proposition~\ref{prop:hitandblock}).  This is a key
property that allows us to restart at time $\tau_\hit(x)$ with a well-behaved
configuration, which then leads to the proof of
Theorem~\ref{thm:perturbed_subadditivity} in Section~\ref{sec:mainproof}.
Its central tool, and the new
technical ingredient of the paper, is a \emph{shifted coupling} (introduced
  in Section~\ref{sec:couplefronts}) that compares two copies of the dynamics
started from well-behaved configurations without invoking stochastic
ordering. Relatively short proofs of Theorems~\ref{thm:shape1} and
\ref{thm:real_shape} are presented in Sections~\ref{sec:proof_speed} and
\ref{sec:proof_shape}.

In the second part of the paper, we first show, in Section~\ref{sec:barw},
how to apply the techniques developed in the first part to a specific model,
namely the branching annihilating random walk whose study we initiated in
\cite{BCC24}. In the closing Section~\ref{sec:outlook} we then comment on
other models which (potentially) satisfy our assumptions and discuss future
directions.

\paragraph{Additional notation.}
We frequently use the following notation related to the coarse-grained
lattice $\L$. For $r\ge 0$ and $z\in \L_\sspace$ we write
\begin{equation}
  B_r^{\bl}(z)  \coloneq
  \big\{ z' \in \mathbb{L}_\sspace : \abs{z-z'} \le r L_\bl \big\}.
\end{equation}
For $z\in \mathbb Z$,  we use $\bbracket z$ to denote the point
in $\L_\sspace$ closest
to $z$, that is,
\begin{equation}
   \bbracket{z} = y \quad \text{if } y \in \L_\sspace \text{ and }
   -L_\bl /2 < z-y \le L_\bl /2.
\end{equation}
Similarly, we use $\bbfloor z  = \floor{z/L_\bl}L_\bl$ and
$\bbceil z = \ceil{z/L_\bl}L_\bl$ to denote the closest points on
$\L_\sspace$ to the left and right of $z$, respectively. With some abuse of
notation, for temporal variables $t\in \N_0$, we write
$\bbfloor t =\floor {t/T_\bl}  T_\bl$ and $\bbceil t =\ceil{ t/T_\bl} T_\bl$.

Throughout the paper we use $c, C, \tilde c, \dots$ to denote finite positive
constants whose value may change from line to line. Constants whose values are
fixed throughout the paper are numbered.

\section{Spreading of well-started configurations}
\label{sec:percol}

The goal of this section is to establish some relatively standard, but rather
explicit control of our process in terms of oriented percolation. Its main
result, Lemma~\ref{lem:spread_goodconf} below, roughly states that if
$\eta_0$ contains a sufficiently large number of well-started blocks then,
with high probability, at later times there will be a large number of
well-started blocks in specified regions of space. Another important result
is Lemma~\ref{lem:manygoodblocks} ensuring that the probability of ever
reaching a configuration with many well-started blocks is bounded from below,
uniformly over all non-trivial initial conditions.

\subsection{Some results on oriented percolation}
\label{sec:comparison_perc}

We start by recalling some well-known results on oriented percolation,
restated in the notation used in this paper.
Recall from Assumption~\ref{ass:domination} that
$(\tilde Y(z,t))_{(z,t) \in \L}$ are i.i.d.~Bernoulli random variables with
success probability $p>p_c$, where $p_c$ is the critical parameter of the
oriented percolation.

A path from $(z,t) \in\L$ to $(z',t') \in \L$ (with $t'\ge t$) is a sequence
\begin{equation*}
  \big((z_i,t_i)\in \L: i = 0,\dots, m \big),
\end{equation*}
so that $m=(t'-t)/T_\bl$, $(z,t)=(z_0,t_0)$, $(z',t') = (z_m,t_m)$, and
$t_i-t_{i-1} = T_\bl$, $\abs{z_i-z_{i-1}}\le L_\bl$ for every $1\le i \le m$.
Such a path is called open in $\tilde Y$ if $\tilde Y(z_i,t_i) =1 $ for all
$0\le i < m$. Note that we do not require
$\tilde Y(z_m,t_m) = \tilde Y(z',t')=1$ at the final point of the path. The
reason for this is that the values of $U$ in $\Block(z',t')$ do not
influence the values of $\eta_{t'}$, by Assumption~\ref{ass:flow}.

Later, by Assumption~\ref{ass:coarse}, open paths will be used to propagate
well-started configurations. Therefore, for $(z,t)\in \L$ with $t \geq T_\bl$
and $A\subset \L_\sspace$ we define
\begin{equation}
\label{eq:clusterindicator}
  \tilde Z^A_t(z)
  \coloneq \ind\big\{\exists z'\in A
    \text{ such that there is an open path in $\tilde Y$
      from $(z',0)$ to $(z,t)$}
    \big\}.
\end{equation}
Further, we define the hitting time of $z \in \L_\sspace$ and the extinction
time related to this percolation process,
\begin{equation}
  \label{eqn:hit_times_percol}
  \begin{split}
    \tilde \tau_\hit^A(z)
    &\coloneq \inf \big\{t \in \mathbb{L}_\ttime : \tilde Z^A_t(z) = 1 \big\},
    \\ \tilde \tau_\ext^A &\coloneq
    \inf \big\{t \in T_\bl \mathbb N : \tilde Z_t^A(\cdot)\equiv 0\big\}.
  \end{split}
\end{equation}
If $A = \{z\}$ we write only $\tilde Z^{z}_t(\cdot)$,
$\tilde \tau^{z}_\hit(\cdot)$, etc.

It is well known that supercritical oriented Bernoulli site percolation has a
non-trivial upper invariant measure~\cite{Durrett1984} (also note that our
  underlying directed graph is literally the `alternative model' in the
  notation of \cite{GrimmettHiemer2002}). We denote this measure by $\nu$.
Taking the initial condition $A$ to be random and $\nu $-distributed, we define
the stationary process $(\tilde Z^\nu_t(z): (z,t)\in \L)$ (note that this
  process uses the same $\tilde Y$'s as $\tilde Z^A$). Finally, we set
\begin{equation}
    \label{eq:p_nu}
    p_\nu = \P (\tilde Z^\nu_0(0) =1)
\end{equation}
to be the density of $\nu $.

Given $z\in \L_\sspace$ and $A\subset \L_\sspace$, we define the coupled regions
\begin{equation}
  \label{eqn:Kdef}
  \tilde K_t^z \coloneq
  \big\{ z' \in \mathbb{L}_\sspace : \, \tilde Z_s^z(z')= \tilde Z_s^\nu(z')
    \text{ for all } s\in \L_\ttime, s\ge t\big\},
  \qquad \tilde K_t^A \coloneq \bigcup_{z\in A}\tilde K_t^z.
\end{equation}

We summarise known results about oriented percolation in the next lemma.
For proofs, see for example~\cite{GM12},~\cite{GM14},~\cite{DG82},
\cite{Durrett1984}, \cite[Thm.~2.30]{Liggett1999book},
\cite[Lemma~A.1]{BirknerCernyDepperschmidtGantert2013}. Part
\eqref{perc:manyzeros} for all $p>p_c$ was recently proved in Lemma~B.2
of~\cite{BBDS26}.

\begin{lemma}
  \label{lem:percolation}
  There exist constants  $\tilde C < \infty$, $\tilde \gamma>0$
  and $\tilde v > 0$ such that
  \begin{enumerate}[ref=\alph*]
    \item \label{perc:extinction}
    $\P(\tilde \tau_\ext^A < \infty)
    \le (1-p_\nu)\wedge\tilde Ce^{-\tilde\gamma\abs A}$
    for every $A \subseteq \mathbb{L}_\sspace$, $A\neq \emptyset$;

    \item \label{perc:die_late}
    $ \P(t< \tilde \tau_\ext^0 <\infty) \le \tilde Ce^{-\tilde \gamma t}$ for
    every $t \in \mathbb{L}_\ttime$;

    \item \label{perc:hit_late}
    $\P(\tilde \tau_\hit^0(z)>t, \tilde \tau_\ext^0=\infty) \le \tilde
    Ce^{-\tilde \gamma t}$ for every $(z,t) \in \L$ with
    $z \in B_{\tilde v t/T_\bl}^\bl(0)$;

    \item\label{perc:couple_late}
    there is a coupling of $\tilde Z^\nu $ and $\tilde Z^0$ so that
    $\P(z \notin \tilde K_t^0, \tilde \tau_\ext^0=\infty) \le \tilde
    Ce^{-\tilde \gamma t}$
    for every $(z,t) \in \L$ with $z \in B_{\tilde v t/T_\bl}^\bl(0)$;

    \item\label{perc:large_dev}
    letting
    $\tilde S_r^\nu(z,t) \coloneq \sum_{z' \in B_r^\bl(z)} \tilde Z_t^\nu(z')$,
    there exists $r_0>0$ such that
    \begin{equation*}
      \P \big( \tilde S_r^\nu(z,t) < p_\nu  V_r/2 \big)
      \le \tilde Ce^{-\tilde \gamma r} \quad
      \text{for every } r \ge r_0 \text{ and } (z,t) \in \L;
    \end{equation*}

    \item\label{perc:manyzeros}
    for every $V=\{(z_i,t_i):i=1,\dots,k\}\subset \mathbb L$ with
    $0\le t_1<\dots < t_k$,
    \begin{equation*}
      \mathbb P(\tilde Z_{t}^\nu(z) = 0 \text{ for all } (z,t)\in V)
      \le (1-p_\nu)\wedge \tilde C e^{-\tilde \gamma k}.
    \end{equation*}
  \end{enumerate}
\end{lemma}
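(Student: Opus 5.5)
Since this lemma collects standard facts about supercritical oriented site percolation on $\L$, the plan is to reduce each part to a known statement for the usual model, checking only that the graph and conventions here (oriented edges $(z,t)\to(z',t+T_\bl)$ with $\abs{z-z'}\le L_\bl$; an open path only needs its non-terminal sites open) match the `alternative model' of \cite{GrimmettHiemer2002}. Rescaling space by $L_\bl$ and time by $T_\bl$ identifies $\L$ with $\Z\times\N_0$ and nearest-or-same-site steps. Our convention of not requiring the endpoint to be open only shifts $\tilde Z$ by one time step relative to the convention in which all sites are open. It therefore changes neither extinction nor the asymptotic estimates, beyond adjusting constants.

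For parts \eqref{perc:extinction}--\eqref{perc:couple_late} I would invoke the classical results for $p>p_c$ in \cite{Durrett1984}, \cite[Thm.~2.30]{Liggett1999book} and \cite{DG82}, which are stated there for oriented bond or site percolation and carry over verbatim to all $p>p_c$ via the Grimmett--Hiemer block argument. The bound $\P(\tilde\tau^A_\ext<\infty)\le 1-p_\nu$ follows from self-duality: the survival probability from $A\ne\emptyset$ is at least that from a single site, which equals $p_\nu$. The exponential bound in $\abs A$ comes from a renormalisation comparing with highly supercritical percolation, as in \cite{BirknerCernyDepperschmidtGantert2013}. Part \eqref{perc:die_late} is the exponential tail of finite cluster lifetimes. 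For parts \eqref{perc:hit_late} and \eqref{perc:couple_late}, I would use the edge speed $\alpha>0$ of the cluster and the standard fact that, on survival, $\tilde Z^0_t$ and $\tilde Z^\nu_t$ agree inside the cone $\{\abs z\le \tilde v t\}$ for any $\tilde v<\alpha$. The exponential tail of the coupling error in that cone is exactly \cite[Lemma~A.1]{BirknerCernyDepperschmidtGantert2013} (see also \cite{GM12,GM14}). Choosing $\tilde v$ smaller than $\alpha$ and converting units via $B^\bl_{\tilde v t/T_\bl}$ gives the stated form.

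For part \eqref{perc:large_dev}, I would use that $\nu$ restricted to a line has exponentially decaying correlations in the sense of a renormalisation or finite-range domination. A block argument then gives a large-deviation lower bound for $\tilde S^\nu_r$. Concretely, realise $\tilde Z^\nu_t$ on $B^\bl_r(z)$ through its dependence on $\tilde Y$ in a time window of length $c r$, up to an error $e^{-\gamma r}$ coming from part \eqref{perc:couple_late}. On this window the indicators are $O(r^{1/2})$-dependent (or finitely dependent after a further coarse-graining), so a Chernoff bound for block sums yields the claim for $r\ge r_0$. Stationarity of $\tilde Z^\nu$ removes the dependence on $(z,t)$.

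Part \eqref{perc:manyzeros} is the delicate one, and for general $p>p_c$ I would simply cite \cite[Lemma~B.2]{BBDS26}. The heuristic is as follows. At each time $t_i$, the event $\tilde Z^\nu_{t_i}(z_i)=0$ means that no infinite backward path starts at $(z_i,t_i)$. Conditionally on the past up to the previous point, this has probability bounded away from $1$, uniformly, by a finite-energy / local-modification argument. Iterating over the $k$ increasing times gives the bound $\tilde Ce^{-\tilde\gamma k}$, while the single-point bound $1-p_\nu$ is trivial. I expect the main obstacle to be precisely this uniform conditional bound. Information about the past (non-percolation of earlier points) is not monotone in a convenient direction, so the iteration has to go through a renormalised highly supercritical model. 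I would therefore not reprove it here, but rely on the cited lemma.
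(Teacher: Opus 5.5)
Your plan of reducing everything to the classical results, and citing Lemma~B.2 of \cite{BBDS26} for (f), is what the paper does: its proof consists only of references. However, the one step you actually spell out fails, and it fails exactly where you say the convention ``only adjusts constants''.

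Self-duality in the form ``survival from a single site equals the density of the upper invariant measure'' holds when every site of a path, endpoint included, must be open. Here the forward and backward conventions are not symmetric. Since endpoints need not be open, $\tilde Z_{t+T_\bl}$ is the $L_\bl$-neighbourhood of the open sites of $\tilde Z_t$. Reversing time in the i.i.d.\ field therefore gives $\P(\tilde Z^{\L_\sspace}_t(0)=1)=\P(\tilde Z^{\{-L_\bl,0,L_\bl\}}_t\not\equiv 0)$, and letting $t\to\infty$, $p_\nu=\P(\tilde\tau^{\{-L_\bl,0,L_\bl\}}_\ext=\infty)$. Survival from $\{0\}$ instead requires $\tilde Y(0,0)=1$, followed (independently) by survival from $\{-L_\bl,0,L_\bl\}$ one level up. Hence $\P(\tilde\tau^0_\ext=\infty)=p\,p_\nu$, not $p_\nu$.

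For $p<1$ this gives $\P(\tilde\tau^0_\ext<\infty)=1-p\,p_\nu>1-p_\nu$. So the ``$(1-p_\nu)\wedge$'' part of (a) is false for singletons, and no argument can establish it as stated. Your monotonicity step does correctly give $\P(\tilde\tau^A_\ext<\infty)\le 1-p\,p_\nu<1$. That is all the paper ever uses: positivity of $\P(\tilde\tau^0_\ext=\infty)$ in Lemma~\ref{lem:manygoodblocks} and Proposition~\ref{prop:couplefronts}. The right fix is to weaken (a) accordingly.

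Your sketch for (e) does not reach the stated rate either. With a time window of length $cr$, the truncated indicators on $B^\bl_r(z)$ are driven by backward cones of width of order $r$. They are therefore dependent across the whole ball, not $O(r^{1/2})$-dependent, and a Chernoff bound gives nothing. Balancing a window of length $s$ trades the union-bound truncation error $V_r e^{-\gamma s}$ against an $O(s)$-dependent Chernoff bound $e^{-cV_r/s}$, and this yields only $e^{-c\sqrt r}$.

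A further coarse-graining does not obviously help, because the event $\tilde Z^\nu_t(z')=1$ is not determined by any finite time window of $\tilde Y$. The exponential rate in $r$ is a genuine one-dimensional large-deviation estimate for the density of the upper invariant measure, of Durrett--Schonmann type and based on planar edge arguments. As in the paper, it should be taken from the literature rather than derived by this truncation argument.
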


\begin{remark}
  \label{rem:lowerboundspeed}
  The constant $\tilde v$ is a lower bound on the speed of spreading of the
  oriented percolation (on the block scale). Converted to the space-time scale
  of the process $\eta $, it will provide a lower bound on the speed $v$ of
  spreading of the population $\eta $ from Theorems~\ref{thm:shape1}
  and~\ref{thm:real_shape},
  \begin{equation}
    \label{eq:vlowerbound}
    v \ge  \tilde v L_\bl/T_\bl.
  \end{equation}
\end{remark}

The next technical result extends
Lemma~\ref{lem:percolation}\eqref{perc:couple_late} and controls the
coupled region $\tilde K^A$ when $\abs A$ is large, showing that it then
spreads at least linearly, with high probability.

\begin{lemma}
  \label{lem:shift_blocks}
  Let $\ell, \rho \in \N$ and $\emptyset \neq A\subseteq B_{\ell}^\bl(0)$. If
  $(z,t)\in \L$ with $t\ge T_\bl (\abs{z}L_\bl^{-1}+\rho+\ell)/\tilde v$, then
  \begin{equation}
    \label{eq:shift_blocks_v1}
    \P( B_{\rho}^\bl(z) \nsubseteq \tilde K_t^A)
    \le \tilde C e^{-\tilde \gamma \abs{A}}
    + \tilde C V_\rho \abs{A} e^{-\tilde\gamma t}
    \le \tilde C' (1+V_\rho) e^{-\tilde \gamma' \abs{A}},
  \end{equation}
  where $\tilde C, \tilde \gamma$ and $\tilde v$ are as in
  Lemma~\ref{lem:percolation}, and $\tilde C', \tilde \gamma '\in (0,\infty)$.
\end{lemma}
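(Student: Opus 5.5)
We prove the lemma by splitting according to whether the percolation cluster started from $A$ survives. Write
\begin{equation*}
  \P\big( B_{\rho}^\bl(z) \nsubseteq \tilde K_t^A\big)
  \le \P(\tilde\tau_\ext^A<\infty)
  + \P\big(B_{\rho}^\bl(z) \nsubseteq \tilde K_t^A,\ \tilde\tau_\ext^A=\infty\big).
\end{equation*}
The first term is at most $\tilde C e^{-\tilde\gamma\abs A}$ by Lemma~\ref{lem:percolation}\eqref{perc:extinction}. For the second term, we do not require every $z_0\in A$ to survive. On the event $\tilde\tau_\ext^A=\infty$, at least one $z_0\in A$ has $\tilde\tau_\ext^{z_0}=\infty$, since the cluster from $A$ is the union of the clusters from its points. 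Because $\tilde K_t^A=\bigcup_{z_0\in A}\tilde K^{z_0}_t$, it suffices that every $z'\in B^\bl_\rho(z)$ lies in $\tilde K^{z_0}_t$ for some surviving $z_0$. Hence
\begin{equation*}
  \P\big(B_{\rho}^\bl(z) \nsubseteq \tilde K_t^A,\ \tilde\tau_\ext^A=\infty\big)
  \le \sum_{z_0\in A}\sum_{z'\in B^\bl_\rho(z)}
  \P\big(z'\notin \tilde K_t^{z_0},\ \tilde\tau^{z_0}_\ext=\infty\big).
\end{equation*}
Here we use the following observation: if $z'\notin\tilde K^A_t$, then $z'\notin \tilde K^{z_0}_t$ for every $z_0$, in particular for a surviving one. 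So we may bound by a union over pairs $(z_0,z')$ of the event that $z_0$ survives and $z'\notin\tilde K^{z_0}_t$.

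Each summand is handled by translation invariance and Lemma~\ref{lem:percolation}\eqref{perc:couple_late}. Shifting by $z_0$, the event becomes $\{z'-z_0\notin\tilde K^0_t,\ \tilde\tau^0_\ext=\infty\}$. Its probability is at most $\tilde Ce^{-\tilde\gamma t}$ provided $\abs{z'-z_0}\le \tilde v t L_\bl/T_\bl$. This is where the hypothesis on $t$ enters: $\abs{z'-z_0}\le \abs z+(\rho+\ell)L_\bl$, and $t\ge T_\bl(\abs z L_\bl^{-1}+\rho+\ell)/\tilde v$ gives exactly this.

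Two points need care in the translation step. First, the coupling in Lemma~\ref{lem:percolation}\eqref{perc:couple_late} must be usable simultaneously for all $z_0$. This holds because $\tilde Z^{z_0}$ and $\tilde Z^\nu$ are built from the same field $\tilde Y$, and the law of $(\tilde Y,\tilde Z^\nu)$ is invariant under spatial shifts by $\L_\sspace$, since $\nu$ is translation invariant. Second, the set $\tilde K$ is defined through the same $\tilde Z^\nu$ for every $z_0$, so the shifted event has the same probability as the unshifted one. With these, the double sum contributes at most $\abs A V_\rho\tilde Ce^{-\tilde\gamma t}$, which gives the first inequality.

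For the second inequality, note that $t\ge T_\bl\ell/\tilde v$ and $\abs A\le V_\ell=2\ell+1$. Hence $t\ge c\abs A$ for some $c>0$ (using $\ell\ge1$), so $e^{-\tilde\gamma t}\le e^{-\tilde\gamma c\abs A}$. The remaining factor $\abs A$ is absorbed by lowering the exponent, since $\abs A e^{-\tilde\gamma c\abs A}\le C e^{-\tilde\gamma c\abs A/2}$. Choosing $\tilde\gamma'=\min(\tilde\gamma,\tilde\gamma c/2)$ and $\tilde C'$ accordingly completes the bound. The only step that requires thought is the reduction of the second term to single starting points via a surviving $z_0$; the rest is bookkeeping.
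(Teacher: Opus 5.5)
Your proof is correct and follows the paper's argument essentially step for step. You use the same split on $\{\tilde\tau^A_\ext<\infty\}$, the same reduction to a union over pairs $(z_0,z')$ with $z_0$ surviving, bounded via Lemma~\ref{lem:percolation}(\ref{perc:extinction},\ref{perc:couple_late}) and translation invariance, and the same use of $\abs{A}\le V_\ell$ with $t\ge T_\bl\ell/\tilde v$ for the second inequality. Your remark on the joint shift-invariance of $(\tilde Y,\tilde Z^\nu)$ only makes explicit what the paper covers by citing the translation invariance of the model.
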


\begin{proof}
  Using the definition of $\tilde K^A_t$ in~\eqref{eqn:Kdef}, observing that
  $\{\tilde{\tau}_\ext^x<\infty\} \subset \{ \tilde{K}^x_t = \emptyset \}$, we
  obtain
  \begin{align*}
    \P\big( B_{\rho}^\bl(z) \nsubseteq \tilde K_t^A\big)
    & \le \P \big(  \tilde \tau_\ext^A <\infty \big)
    + \P \Big( \exists \, x \in A, y \in B_{\rho}^\bl(z) :
      \, \tilde \tau_\ext^x = \infty, y \not\in \tilde K_t^x \Big)
    \\ & \le \P \big(  \tilde \tau_\ext^A <\infty \big)
    + \sum_{x \in A} \sum_{y \in B_{\rho}^\bl(z)}
      \P \big( \tilde \tau_\ext^x = \infty, y \not\in \tilde K_t^x \big).
  \end{align*}
  If $x \in A \subset B_{\ell}^\bl(0)$ and $y \in B_{\rho}^\bl(z)$, then
  $t \ge T_\bl(\abs{z}/L_\bl+\rho+\ell)/\tilde v$ implies
  $y \in B_{\tilde v t/T_\bl}^\bl(x)$. The first inequality in
  \eqref{eq:shift_blocks_v1} then follows by
  Lemma~\ref{lem:percolation}(\ref{perc:extinction},\ref{perc:couple_late})
  together with the translation invariance of the model. The second
  inequality in \eqref{eq:shift_blocks_v1} follows by observing that
  ${\abs A \le 2\tilde{v} t/T_\bl + 1}$ by assumption. %
\end{proof}

\subsection{Spreading lemma}

We now transfer the results from the previous section to the context of
the process $\eta$. For this, Assumptions~\ref{ass:coarse} and
\ref{ass:domination} are instrumental.

Recall the sets $G^\eta $ and $G_\loc^\eta(x)$ from
Assumption~\ref{ass:coarse} and the process $Y$ from
Assumption~\ref{ass:domination}. For $\eta \in E^\Z$, let
\begin{equation}
  \label{eqn:Weta}
  W(\eta) \coloneq \{ z \in \L_\sspace : \eta \in G_\loc^\eta(z) \}
\end{equation}
be the set of block centres where $\eta$ is well started. By
Assumption~\ref{ass:coarse}(b), if $\eta_0 \in G_\loc^\eta (z)$ and there is
an open path in $Y$ from $(z,0)$ to $(z',t)$, then
$\eta_{t} \in G_\loc^\eta (z')$. Moreover, by
Assumption~\ref{ass:domination}, $Y(z,t)\ge \tilde Y(z,t)$, so the existence
of an open path in $\tilde Y$ implies that the same path is open in $Y$. From
this reasoning it follows that, for $(z,t)\in \L$,
\begin{equation}
  \label{eqn:fromZtoeta}
  \tilde Z^{W(\eta_0)}_t(z) = 1 \qquad \implies \qquad z\in W(\eta_t).
\end{equation}

The next lemma controls the size of the intersections of $W(\eta_n)$ with
sufficiently large balls, given that the initial condition contains
sufficiently many well-started blocks. (Note that this lemma is formulated on
  the space and time scale of the process $\eta $, and not of the blocks.)

\begin{lemma}
  \label{lem:spread_goodconf}
  Let $\tilde C, \tilde \gamma, \tilde v, \tilde C', \tilde \gamma'$ be as in
  Lemmas~\ref{lem:percolation},~\ref{lem:shift_blocks}. For every
  $\ell, \rho  \in \N$, $a \in \{1,\dots,V_\ell\}$, $\eta_0 \in E^\Z$ such
  that $\abs{ B_\ell^\bl(0) \cap W(\eta_0)}  \ge a$, $z\in \mathbb Z$, and
  $n \in \L_\ttime$ such that $n \ge (\rho+\abs{z}/L_\bl+\ell+1)T_\bl/\tilde v$
  \begin{equation*}
    \P_{\eta_0} \Big(
      \abs[\big]{B_\rho^\bl(\bbracket z ) \cap W(\eta_n)}
      < \frac{p_\nu V_{\rho}}{2} \Big)
    \le \tilde C e^{-\tilde \gamma \rho}
    + \tilde C' (1+V_\rho) e^{ -\tilde \gamma' a},
  \end{equation*}
  and
  \begin{equation*}
    \P_{\eta_0}(\tau_\ext< \infty) \le  \tilde C e^{-\tilde \gamma a}.
  \end{equation*}
\end{lemma}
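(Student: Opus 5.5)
The plan is to reduce both bounds to statements about the percolation process $\tilde Z$, using the implication~\eqref{eqn:fromZtoeta}. Choose a set $A\subseteq B_\ell^\bl(0)\cap W(\eta_0)$ with $\abs{A}=a$; this is possible by hypothesis. Couple $\eta$ with $\tilde Y$ through Assumption~\ref{ass:domination}. This requires some care about independence. The field $\tilde Y$ is built from the $U$'s, and $\eta_0$ is deterministic under $\P_{\eta_0}$, so $A$ is deterministic and $\tilde Y$ is i.i.d.\ Bernoulli($p$) independent of the choice of $A$. Since $\tilde Z^{A}_t\le \tilde Z^{W(\eta_0)}_t$ pointwise (monotonicity of reachability in the starting set), \eqref{eqn:fromZtoeta} gives $\{z'\in\L_\sspace:\tilde Z^A_n(z')=1\}\subseteq W(\eta_n)$ for $n\in\L_\ttime$, $n\ge T_\bl$.

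\textbf{Extinction bound.} If $\tilde\tau^A_\ext=\infty$, then for every $t\in\L_\ttime$ some block is well started at time $t$. Because $G^\eta$ excludes the all-zero configuration, $\eta_t\not\equiv 0$. By Assumption~\ref{ass:flow} the state $0$ is absorbing globally, so $\eta_m\not\equiv0$ for all $m$, since a zero configuration at an intermediate time would stay zero. Hence $\{\tau_\ext<\infty\}\subseteq\{\tilde\tau_\ext^A<\infty\}$, and Lemma~\ref{lem:percolation}\eqref{perc:extinction} gives the bound $\tilde C e^{-\tilde\gamma a}$.

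\textbf{Count bound.} Set $z_0=\bbracket z$, so that $\abs{z_0}/L_\bl\le \abs z/L_\bl+1/2$. The hypothesis $n\ge(\rho+\abs z/L_\bl+\ell+1)T_\bl/\tilde v$ then implies $n\ge T_\bl(\abs{z_0}/L_\bl+\rho+\ell)/\tilde v$. The extra $+1$ in the hypothesis absorbs this rounding and ensures $n\ge T_\bl$. Work on the intersection of two events:
\[
E_1=\{B_\rho^\bl(z_0)\subseteq\tilde K^A_n\},\qquad E_2=\{\tilde S^\nu_\rho(z_0,n)\ge p_\nu V_\rho/2\}.
\]
On $E_1$, each $z'\in B^\bl_\rho(z_0)$ lies in $\tilde K^{x}_n$ for some $x\in A$, so $\tilde Z^x_n(z')=\tilde Z^\nu_n(z')$. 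If $\tilde Z^\nu_n(z')=1$, this gives $\tilde Z^A_n(z')\ge\tilde Z^x_n(z')=1$. Therefore $\abs{B^\bl_\rho(z_0)\cap W(\eta_n)}\ge\tilde S^\nu_\rho(z_0,n)$. Lemma~\ref{lem:shift_blocks} bounds $\P(E_1^c)\le\tilde C'(1+V_\rho)e^{-\tilde\gamma' a}$. Lemma~\ref{lem:percolation}\eqref{perc:large_dev} bounds $\P(E_2^c)\le\tilde Ce^{-\tilde\gamma\rho}$ for $\rho\ge r_0$. A union bound then gives the claim.

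\textbf{Main obstacle.} The delicate point is small $\rho<r_0$, where \eqref{perc:large_dev} is not available. I would handle it by enlarging $\tilde C$ so that $\tilde Ce^{-\tilde\gamma r_0}\ge1$, which makes the bound trivial in that range. This is legitimate because the constants in Lemma~\ref{lem:percolation} are only asserted to exist.

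A second, minor issue is the coupling of $\tilde K^x$ for different $x$ with a single $\tilde Z^\nu$. The definition of $\tilde K^A_t$ presupposes one joint coupling in which the same $\tilde Y$ drives all the $\tilde Z^x$ and $\tilde Z^\nu$, and Lemma~\ref{lem:shift_blocks} was stated in exactly that setting, so it applies directly.
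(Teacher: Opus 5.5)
Your proposal is correct and is essentially the paper's proof: Lemma~\ref{lem:shift_blocks} controls the event that $B^\bl_\rho(\bbracket{z})$ is not contained in the coupled region, Lemma~\ref{lem:percolation}\eqref{perc:large_dev} controls the density of the stationary cluster on that ball, and \eqref{eqn:fromZtoeta} together with Lemma~\ref{lem:percolation}\eqref{perc:extinction} gives the extinction bound. The paper works with $\tilde K_n^{W(\eta_0)}$ and $W(\eta_0)\cap B^\bl_\ell(0)$ rather than a subset of size exactly $a$, which makes no difference; your checks that $\abs{\bbracket{z}}\le\abs{z}+L_\bl/2$ is absorbed by the hypothesis on $n$, and that $\rho<r_0$ is handled by enlarging $\tilde C$, fill in details the paper leaves implicit.
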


\begin{proof}
  By Lemma~\ref{lem:shift_blocks}, for $\eta_0$, $a$, $z$, and $n$ as in the
  statement,
  \begin{equation*}
    \P_{\eta_0}(B^\bl_\rho(\bbracket z) \nsubseteq \tilde K_n^{W(\eta_0)})
    \le  \tilde C' (1+V_\rho )
    e^{-\tilde \gamma ' \abs{W(\eta_0) \cap B_\ell^\bl(0)}}
    \le  \tilde C' (1+V_\rho )
    e^{-\tilde \gamma ' a}.
  \end{equation*}
  In addition, by definition of $\tilde K_n^A$ and
  Lemma~\ref{lem:percolation}\eqref{perc:large_dev},
  \begin{equation*}
    \begin{split}
      &\P\bigg(\Big\{\abs[\big]{\{z'\in B_\rho^\bl(\bbracket z):
              \tilde Z^{W(\eta_0)}_n(z') =1\}} < \frac{p_\nu V_\rho }2\Big\}
        \cap \{B^\bl_\rho({\bbracket z}) \subseteq \tilde K_n^{W(\eta_0)}\}
        \bigg)
      \\&\qquad \le\P\Big(
        \tilde S^\nu_\rho (\bbracket z, n) < \frac{p_\nu  V_\rho }{2}\Big)
      \le \tilde Ce^{-\tilde \gamma \rho }.
    \end{split}
  \end{equation*}
  Together with \eqref{eqn:fromZtoeta}, this implies the first claim of the
  lemma. The second one directly follows from
  Lemma~\ref{lem:percolation}\eqref{perc:extinction} and \eqref{eqn:fromZtoeta}.
\end{proof}

\subsection{Reaching many well-started blocks}

In order to apply Lemma~\ref{lem:spread_goodconf}, we must show that, with
positive probability, a sufficient number of well-started blocks can be
produced from any non-trivial initial configuration. Assumption
\ref{ass:irreducibility} ensures this.

\begin{lemma}
  \label{lem:manygoodblocks}
  There exist $\varepsilon'>0$ and $c_0, r_1 \in \N$ such that for every
  $\ell, r\in \N$ with $r\ge r_1$, letting $n= r c_0 T_\bl \in \L_\ttime$,
  \begin{equation}
    \label{manygoodblocks}
    \P_{\zeta }\Big( \exists z \in B_{\ell}(0) \cap \mathbb{L}_\sspace :
      \abs[\big]{ B^\bl_{r}(z) \cap W(\eta_n) }
      \ge \frac{1}{2} p_\nu V_r \Big)
    \ge \varepsilon'
  \end{equation}
  uniformly over all $\zeta  \in E^{\Z}$ such that
  $\{ x \in B_\ell(0) : \zeta (x) \neq0 \} \neq \emptyset$.
\end{lemma}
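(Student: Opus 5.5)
The plan is to combine Assumption~\ref{ass:irreducibility}, which turns one particle into one well-started block, with the oriented percolation estimates of Lemma~\ref{lem:percolation}, which turn one surviving cluster into a positive density of well-started blocks. Lemma~\ref{lem:spread_goodconf} cannot be used directly here: with only $a=1$ well-started block, its error term $\tilde C'(1+V_\rho)e^{-\tilde\gamma' a}$ is not small. Instead we work on the event that the single percolation cluster survives, which has probability at least $p_\nu$.

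\emph{Step 1: one well-started block.} Fix $\zeta$ and some $x\in B_\ell(0)$ with $\zeta(x)\neq 0$. Let $z_0\in\L_\sspace$ be the point $\bbfloor x$ or $\bbceil x$ lying between $0$ and $x$; for $x\ge 0$ this is $\bbfloor x$, and for $x<0$ it is $\bbceil x$. Then $|z_0|\le |x|\le \ell$, so $z_0\in B_\ell(0)\cap\L_\sspace$, and $|x-z_0|<L_\bl$, so $\theta_{z_0}\zeta$ has a particle in $B_{L_\bl}(0)$. By translation invariance (Assumption~\ref{ass:flow}) and Assumption~\ref{ass:irreducibility},
\[
\P_\zeta\big(\eta_{t_4}\in G^\eta_\loc(z_0)\big)\ge\varepsilon_4 .
\]

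\emph{Step 2: spreading after time $t_4$.} Condition on $\mathcal U_{0,t_4}$ and on this event. The variables $U$ after time $t_4$ are independent of it. Since $t_4\in\L_\ttime$, the blocks after $t_4$ are aligned with the lattice, so we can rerun the coarse-graining from time $t_4$ with the fresh field $\tilde Y$ (time-shifted). By \eqref{eqn:fromZtoeta}, applied from time $t_4$ with $A=\{z_0\}\subset W(\eta_{t_4})$, it suffices to show: with $m\coloneq n-t_4$,
\[
\P\Big(\sum_{z'\in B_r^\bl(z_0)}\tilde Z^{z_0}_m(z')\ge \tfrac12 p_\nu V_r\Big)\ge p_\nu/2 .
\]
This bound will be split as follows:
\[
\P(\tilde\tau^{z_0}_\ext=\infty)
-\sum_{y\in B^\bl_r(z_0)}\P\big(y\notin\tilde K^{z_0}_m,\ \tilde\tau^{z_0}_\ext=\infty\big)
-\P\big(\tilde S^\nu_r(z_0,m)<p_\nu V_r/2\big).
\]
On survival, if $B^\bl_r(z_0)\subseteq\tilde K^{z_0}_m$, then $\tilde Z^{z_0}_m$ agrees with $\tilde Z^\nu_m$ on the ball, so the sum equals $\tilde S^\nu_r(z_0,m)$. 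The three terms are bounded as follows.
\begin{itemize}
\item The first term is at least $p_\nu$ by Lemma~\ref{lem:percolation}(\ref{perc:extinction}).
\item Each summand in the second term is at most $\tilde Ce^{-\tilde\gamma m}$ by Lemma~\ref{lem:percolation}(\ref{perc:couple_late}) and translation invariance. This requires $y\in B^\bl_{\tilde v m/T_\bl}(z_0)$, i.e.\ $r\le\tilde v m/T_\bl$.
\item The third term is at most $\tilde C e^{-\tilde\gamma r}$ by Lemma~\ref{lem:percolation}(\ref{perc:large_dev}), for $r\ge r_0$.
\end{itemize}

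\emph{Step 3: choice of constants.} Take $c_0\in\N$ with $c_0>2/\tilde v$. Then $m=rc_0T_\bl-t_4\ge rT_\bl/\tilde v$ as soon as $r$ is large compared with $t_4/T_\bl$, and $n\in\L_\ttime$ holds automatically. The total error is then at most
\[
V_r\tilde Ce^{-\tilde\gamma(rc_0T_\bl-t_4)}+\tilde Ce^{-\tilde\gamma r}.
\]
Choose $r_1\ge r_0$ so large that this is at most $p_\nu/2$ for all $r\ge r_1$. Multiplying with Step~1 gives \eqref{manygoodblocks} with $\varepsilon'=\varepsilon_4p_\nu/2$, uniformly in $\ell$ and $\zeta$.

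The main obstacle is the bookkeeping in Step~2. One must check that restarting the block construction at the lattice time $t_4$ is legitimate, i.e.\ that the $\tilde Y$ coupling can be taken for the shifted $U$'s independently of $\mathcal U_{0,t_4}$. One must also check that the open-path/well-started implication \eqref{eqn:fromZtoeta} applies from a single block.
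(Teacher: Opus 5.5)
Your proof is correct and follows the same route as the paper's proof. Assumption~\ref{ass:irreducibility} with translation invariance gives one well-started block in $B_\ell(0)$ at time $t_4$; then the Markov property, parts (a), (d), (e) of Lemma~\ref{lem:percolation} for the single-site percolation cluster, and \eqref{eqn:fromZtoeta} give the claim, and you are simply more explicit about details the paper leaves implicit (choosing $z_0$ between $0$ and $x$ so it lies in $B_\ell(0)$, taking $\tilde\varepsilon=p_\nu/2$, and restarting the $\tilde Y$-coupling with the fresh noise after $t_4$).
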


\begin{proof}
  Assumption~\ref{ass:irreducibility} and the translation invariance of the
  model imply that there are $t_{\ref{ass:irreducibility}}\in \L_\ttime$ and
  $\varepsilon_{\ref{ass:irreducibility}} >0$ such that, for any $\zeta $ as
  in the statement of the lemma,
  \begin{equation*}
    \P_\zeta(W(\eta_{t_{\ref{ass:irreducibility}}}) \cap B_{\ell}(0)
      \neq \emptyset)
    \ge \varepsilon_{\ref{ass:irreducibility}}.
  \end{equation*}
  If the event above occurs, let $y\in B_\ell(0)\cap \L_\sspace$ be an
  element of $W(\eta_{t_{\ref{ass:irreducibility}}})\cap B_\ell(0)$ chosen by
  some arbitrary rule. Using the Markov property, after time
  $t_{\ref{ass:irreducibility}}$ we can control $\eta$ using a comparison
  with the oriented percolation started from the site $y$. By Lemma
  \ref{lem:percolation}%
  (\ref{perc:extinction},\,\ref{perc:couple_late},\,\ref{perc:large_dev}),
  using the notation of this lemma, there are $\tilde \varepsilon >0$ and
  $r_1'\in \mathbb N$ such that if $r>r_1'$ and $s>r T_\bl/ \tilde v$,
  $s\in \L_\ttime$,
  \begin{equation*}
    \P\big(B^\bl_r(y) \subset \tilde K^y_s,
      \tilde S_r^\nu (y, s) > p_\nu V_r/2\big)
    > \tilde \varepsilon.
  \end{equation*}
  Implication \eqref{eqn:fromZtoeta} with $W(\eta_0) = \{y\}$ then implies
  the claim of the lemma with
  $\varepsilon' = \varepsilon_{\ref{ass:irreducibility}}\tilde\varepsilon$,
  $c_0 = 2 \ceil{1 /  \tilde v}$, and  $r_1\ge r_1'$ so that
  $t_{\ref{ass:irreducibility}}< r_1 T_\bl/\tilde v$.
\end{proof}

\begin{remark}
  \label{rem:survival}
  Lemmas~\ref{lem:spread_goodconf}, \ref{lem:manygoodblocks} together imply
  that the survival probability $\P_\zeta({\tau_\ext=\infty})$ is bounded
  from below by a positive constant uniformly over all starting
  configurations $\zeta\not\equiv 0$, that is, over all initial conditions
  containing at least one particle.
\end{remark}

\begin{remark}
  For future reference, observe that the proof of
  Lemma~\ref{lem:spread_goodconf} only requires
  Assumptions~\ref{ass:flow}--\ref{ass:domination}.
  Lemma~\ref{lem:manygoodblocks}
  requires, in addition, Assumption~\ref{ass:irreducibility}.
\end{remark}

\section{Well-started blocks close to the right edge}
\label{sec:liverpool}

The goal of this section is to provide control of the configuration
$\eta_t$ close to its right edge. This control will be essential in order to
prove the approximate subadditivity stated in
Theorem~\ref{thm:perturbed_subadditivity}. The main result of this section,
Proposition~\ref{prop:hitandblock} below, shows that for $x>0$ large, at the
time $\tau_\hit(x)$ when there is an occupied site to the right of $x$ for the
first time, there are, with high probability, many well-started blocks in the
configuration $\eta_{\tau_\hit(x)}$ at distance $O(\log x)$ from $x$.

To state the result, we recall $W(\eta )$ from \eqref{eqn:Weta},
and for $\rho\in \mathbb N$ and $\eta\in E^{\mathbb Z}$ we define
\begin{equation}
  \label{def:D_rho(x,eta)}
  D_\rho(x, \eta)
  \coloneq \inf\bigg\{ r \in \N: \exists \, z \in B_r^\bl(\bbracket x )
    \text{ such that } \abs{W(\eta) \cap B_\rho^\bl(z)}
    \ge \frac{p_\nu V_\rho}{2}\bigg\}
\end{equation}
to be the distance (at the block scale) from $\bbracket x$ to the centre of
the first ball of radius $\rho$ containing sufficiently many well-started
blocks in $\eta$.

\begin{proposition}
  \label{prop:hitandblock}
  Under Assumptions~\ref{ass:flow}--\ref{ass:irreducibility}
  there are constants $\Cl{c:liverbig}, \Cl{c:liversmall}\in (0,\infty)$, and
  $\varepsilon \in (0,1)$ such that for every $\eta_0$ which is non-zero only at
  the origin, every $x>0$, and every $\rho < \varepsilon \sqrt x$,
  \begin{equation*}
    \P_{\eta_0}\big( D_\rho(x, \eta_{\tau_\hit(x)})> \rho^2,
      \tau_\hit(x)<\infty\big)
    < \Cr{c:liverbig}\big(e^{-\Cr{c:liversmall} \sqrt x}
      + e^{-\Cr{c:liversmall} \rho + 2\log x}\big).
  \end{equation*}
\end{proposition}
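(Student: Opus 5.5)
The plan is to reduce the event at the random time $\tau_\hit(x)$ to a union of events at deterministic space-time points. We then bound each such event by a renewal-type argument that combines Lemma~\ref{lem:manygoodblocks} with Lemma~\ref{lem:spread_goodconf}.

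First, we localise the hitting point. By definition, at time $\tau_\hit(x)-1$ all sites $z\ge x$ are empty. By Assumption~\ref{ass:flow}, any site occupied at time $\tau_\hit(x)$ and lying to the right of $x$ must then belong to $[x,x+R)$. Also $\tau_\hit(x)\ge x/R$, since $\eta$ spreads at most $R$ sites per step. Next, we show that
\[
\P_{\eta_0}\big(x^2<\tau_\hit(x)<\infty\big)\le Ce^{-c\sqrt x}.
\]
To do this, split $[0,x^2]$ into windows of length $c_0 r T_\bl$ with $r\asymp\sqrt x$. At the start of each window the process is alive, so by Lemma~\ref{lem:manygoodblocks} and the Markov property it produces, with probability at least $\varepsilon'$, a ball containing at least $p_\nu V_r/2$ well-started blocks. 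Once this has happened, Lemma~\ref{lem:spread_goodconf} with $a\asymp\sqrt x$ shows that, up to probability $e^{-c\sqrt x}$, the process survives and spreads at speed $\tilde v L_\bl/T_\bl$. It therefore hits $x$ well before time $x^2$. The probability of failing in all of the many windows is $(1-\varepsilon')^{\#\text{windows}}$, which is also smaller than $e^{-c\sqrt x}$. Hence it suffices to bound, uniformly over $n\in[x/R,x^2]$ and $y\in[x,x+R)$, the probability
\[
p(y,n)\coloneq\P_{\eta_0}\big(\eta_n(y)\neq0,\ D_\rho(x,\eta_n)>\rho^2\big).
\]
A union bound over the at most $Rx^2$ such pairs then costs the factor $e^{2\log x}$ in the statement.

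To bound $p(y,n)$, go backwards from time $n$. Set $m=c_0\rho T_\bl$ and consider the $\rho$ disjoint time windows $[n-(j+1)m',\,n-jm']$ for $j=1,\dots,\rho$, where $m'=m+(\text{spreading time})$ is of order $\rho$ block steps. This requires $n\gtrsim\rho^2$, which holds because $\rho<\varepsilon\sqrt x$ and $n\ge x/R$; the constant $\varepsilon$ is chosen here. Since $\eta_n(y)\neq0$, Assumption~\ref{ass:flow} shows that at the start $s_j$ of each window there is a particle in $B_{R(n-s_j)}(y)$, so within distance $O(\rho^2)$ of $x$.

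The argument then proceeds in steps. For each $j$, fix a particle at time $s_j$ by some deterministic rule. On its occurrence, Lemma~\ref{lem:manygoodblocks} (with $\ell$ a fixed multiple of $L_\bl$ around that particle, and $r=\rho$) gives, conditionally on $\mathcal U_{0,s_j}$, probability at least $\varepsilon'$ that at time $s_j+m$ there is a ball $B_\rho^\bl(z_j)$ with at least $p_\nu V_\rho/2$ well-started blocks. If this happens, Lemma~\ref{lem:spread_goodconf} (with $\ell=\rho$, $a=p_\nu V_\rho/2$, target $\bbracket x$ shifted by $O(\rho^2)$, and time $n-s_j-m$ large enough by our choice of $m'$) shows that, except with probability at most $\tilde C e^{-\tilde\gamma\rho}+\tilde C'(1+V_\rho)e^{-\tilde\gamma' a}\le Ce^{-c\rho}$, we get $\abs{B_\rho^\bl(\bbracket x)\cap W(\eta_n)}\ge p_\nu V_\rho/2$. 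That is, $D_\rho(x,\eta_n)\le\rho^2$.

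So on the event defining $p(y,n)$, either all $\rho$ attempts failed, or one succeeded but the subsequent spreading failed. Strictly, we should not condition on the future event $\eta_n(y)\neq0$. Instead, we use that each window start where the process is alive near $y$ is a stopping-type event, and we bound the probability of $\rho$ consecutive failures by the Markov property: this gives at most $(1-\varepsilon')^\rho$. Adding $\rho\,Ce^{-c\rho}$ for the spreading failures gives $p(y,n)\le Ce^{-c\rho}$. Summing over $(y,n)$ and adding the $e^{-c\sqrt x}$ term yields the claim.

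The main obstacle is the bookkeeping in this backward-window argument. The particle whose presence we use at time $s_j$ is defined through the future event $\eta_n(y)\neq0$. The clean way is to run the attempts forward in time: at each $s_j$, if any particle lies in the relevant light cone $B_{R(n-s_j)}(y)$, make an attempt from it. Then note that if all attempts fail and the process has not died out in the cone, we are on an event of probability $(1-\varepsilon')^\rho$. The second technical point is checking the spatial and time constraints of Lemma~\ref{lem:spread_goodconf}. We need $n-s_j\ge(\rho+\abs{z_j-\bbracket x}/L_\bl+\rho+1)T_\bl/\tilde v$ with $\abs{z_j-x}=O(\rho^2)$ on the particle scale. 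This forces the windows to sit at distance of order $\rho$ block-times before $n$, and requires the constant in ``$\rho^2$'' to be compatible with $R$, $c_0$ and $\tilde v$. If needed, we absorb this into the constants by taking windows of length $C\rho$ and only $\rho/C$ of them.
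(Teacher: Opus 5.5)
\textbf{There is a genuine gap in your bound on $p(y,n)$.} Your reduction to deterministic space-time points is sound in outline (see the last paragraph). The problem is the choice of spatial target.

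You take the ball of well-started blocks produced near the particle chosen at time $s_j$ and spread it to the fixed target $\bbracket{x}$, concluding $\abs{B^\bl_\rho(\bbracket{x})\cap W(\eta_n)}\ge p_\nu V_\rho/2$. This cannot be arranged.
\begin{itemize}
\item All you know about that particle is that it lies in $B_{R(n-s_j)}(y)$. So $\abs{z_j-\bbracket{x}}$ may be as large as $R(n-s_j)$.
\item Lemma~\ref{lem:spread_goodconf} then requires $n-s_j-m\ge R(n-s_j)T_\bl/(\tilde v L_\bl)$.
\item The guaranteed speed $\tilde v L_\bl/T_\bl$ of well-started blocks can never exceed the maximal particle speed $R$, since a well-started block contains an occupied site by the definition of $G^\eta$. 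Hence this inequality fails for every window.
\end{itemize}
Both the possible displacement and the available time are proportional to $n-s_j$, and the ratio of the two speeds is fixed by the model. So no placement, length or number of windows helps, and the remedy in your last paragraph (``absorb this into the constants'') does not resolve it. Your conclusion would in fact give $D_\rho(x,\eta_n)\le 1$, which shows that the slack $\rho^2$ in the statement is not being used.

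\textbf{The fix is to spend exactly that slack.}
\begin{itemize}
\item Apply Lemma~\ref{lem:spread_goodconf} \emph{in place}, centred at $z_j$ with target offset $0$. This needs only $n-s_j-m\ge(2\rho+1)T_\bl/\tilde v$, and gives $\abs{B^\bl_\rho(z_j)\cap W(\eta_n)}\ge p_\nu V_\rho/2$ up to probability $Ce^{-c\rho}$.
\item You then get $D_\rho(x,\eta_n)\le\rho^2$ provided $\abs{z_j-\bbracket{x}}\le\rho^2L_\bl$.
\item The light cone guarantees this once the total look-back from $n$ is at most about $\rho^2L_\bl/(2R)$.
\item Each attempt lasts $c_0\rho T_\bl+(2\rho+1)T_\bl/\tilde v\asymp\rho T_\bl$. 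The budget therefore still allows $c\rho$ attempts, with $c$ of order $L_\bl/(RT_\bl)$, which gives $(1-\varepsilon')^{c\rho}$.
\end{itemize}
This is precisely Lemma~\ref{lem:neverwalkalone} together with its condition~\eqref{eqn:nwaC}.

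\textbf{Your reduction is a genuinely different and valid route.} The paper localises $\tau_\hit(x)$ in $[x/v_2,x/v_1]$ via Lemma~\ref{lem:hittingtime}, which uses Lemmas~\ref{lem:shift_blocks} and~\ref{lem:incoupled}, and then takes a union over $y\in[-tR,tR]$. You use the much cruder time window $[x/R,x^2]$ but the sharp spatial localisation $y\in[x,x+R)$. This gives the same $O(x^2)$ count and handles $D_\rho(x,\cdot)$ directly rather than $D_\rho(y,\cdot)$. You can also avoid Lemma~\ref{lem:incoupled} altogether: a well-started block centred at a point $\ge x+KL_\bl$ contains an occupied site $\ge x$, which is the justification your ``therefore hits $x$'' needs.

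The same speed mismatch bites mildly here too. A particle chosen at time $s$ may be at distance $x+Rs$ from the target, so attempts started near time $x^2$ do not force $\tau_\hit(x)\le x^2$. Restrict the attempts to $s\le\delta x^2$, with $\delta$ small of order $\tilde vL_\bl/(RT_\bl)$. This still leaves of order $x^{3/2}$ attempts, which is more than enough.
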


\begin{remark}
  For future reference, we remark that the proof of
  Proposition~\ref{prop:hitandblock} only requires (a), (b) from
  Assumption~\ref{ass:coarse}.
\end{remark}

To prove this proposition we need three auxiliary lemmas. The first one
states that at a deterministic large time $n$, if $\eta_n$ is non-zero at a
given site $x$, then, with high probability, there are many well-started
blocks close to $x$.

\begin{lemma}
  \label{lem:neverwalkalone}
  There exist constants
  $\Cl{c:nwabig}, \Cl{c:nwasmall}, \Cl{c:nwalb} \in (0,\infty)$ such that for
  every $n\in \mathbb N$, $\rho \le \Cr{c:nwalb} \sqrt n$, $x\in \mathbb Z$,
  and every $\eta_0\in E^\Z$
  \begin{equation*}
    \P_{\eta_0} \big( \eta_n(x) \neq 0, D_\rho(x,\eta_n)>\rho^2 \big)
    \le  \Cr{c:nwabig} e^{-\Cr{c:nwasmall} \rho}.
  \end{equation*}
\end{lemma}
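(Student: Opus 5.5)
We will trace the occupied site $(x,n)$ backwards in time and exploit the fact that a particle present at time $n$ must have ancestors at earlier times. The plan is to look back a time $m \approx \rho^2 T_\bl$, chosen as a multiple of $T_\bl$. If $\eta_n(x)\neq 0$, then by Assumption~\ref{ass:flow} (finite range and $0$ being a local fixed point) there is a chain of occupied sites $(x_k, k)$, $n-m\le k\le n$, with $\abs{x_{k+1}-x_k}\le R$. Hence $\eta_{n-m}$ has a particle in $B_{mR}(x)$, which lies within $O(\rho^2)$ blocks of $\bbracket x$. The idea is then to restart at this earlier time.

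To do so, we partition the time interval $[n-m,n]$ into $k\asymp\rho$ consecutive windows, each of length $\asymp \rho\, c_0 T_\bl$. For each window we argue as follows. At the start of the window there is some occupied site on the ancestral chain. By Lemma~\ref{lem:manygoodblocks} (with $r\asymp\rho$, $\ell$ of order the block size), with probability at least $\varepsilon'$, uniformly in the past, at the end of the window there is a ball $B^\bl_\rho(z)$ that contains at least $p_\nu V_\rho/2$ well-started blocks and lies near the chain. The Markov property at the start of each window makes these attempts essentially independent, so the probability that all $k$ attempts fail is at most $(1-\varepsilon')^{k}\le e^{-c\rho}$.

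Once a window succeeds, we propagate forward using Lemma~\ref{lem:spread_goodconf} with $a=p_\nu V_\rho/2$ and radius $\rho$. This shows that at time $n$ the ball $B^\bl_\rho(\bbracket x)$, or a nearby ball, still contains at least $p_\nu V_\rho/2$ well-started blocks, except on an event of probability $\tilde C e^{-\tilde\gamma\rho}+\tilde C'(1+V_\rho)e^{-\tilde\gamma' a}\le Ce^{-c\rho}$. The distance condition in that lemma requires the remaining time to be at least of order $(\rho+\abs{z}/L_\bl)T_\bl/\tilde v$ with $\abs z=O(\rho^2 L_\bl)$. This is why the total lookback is $\asymp\rho^2$, and why $\rho\le \Cr{c:nwalb}\sqrt n$ is needed so that the lookback fits into $[0,n]$. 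All displacements stay within $O(\rho^2)$ blocks, which gives $D_\rho(x,\eta_n)\le\rho^2$ once the constants are tuned.

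The main obstacle is the conditioning. The event $\eta_n(x)\ne0$ concerns the future, while the restarts use the Markov property forward in time, and the ancestral chain is random and depends on the future. To handle this, we will reorganise the argument forward in time. On the bad event, at each window start $t_j$ there is a particle in $B_{(n-t_j)R}(x)$. For each fixed $j$ we apply Lemma~\ref{lem:manygoodblocks} conditionally on $\mathcal U_{0,t_j}$, with $\ell$ equal to the full radius $O(\rho^2 L_\bl)$. Requiring the good ball to persist through the later windows via Lemma~\ref{lem:spread_goodconf} then yields a supermartingale-type product bound of the form $\prod_j(1-\varepsilon'+Ce^{-c\rho})$. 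One has to make sure that the distance bound $\abs z$ entering Lemma~\ref{lem:spread_goodconf} is uniformly $O(\rho^2)$, which requires care in choosing the window lengths.
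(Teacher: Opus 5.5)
Your architecture matches the paper's. You use a lookback of order $\rho^2$ block steps, cut into $\asymp\rho$ windows of length $\asymp\rho c_0T_\bl$, and apply Lemma~\ref{lem:manygoodblocks} in each window. You use the observation that $\{\eta_n(x)\neq0\}$ forces a particle in $B_{(n-t_j)R}(x)$ at every window start, which is exactly how the paper removes the conditioning on the future. Finally, Lemma~\ref{lem:spread_goodconf} carries a successful ball to time $n$.

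\textbf{The gap: the propagation step cannot be closed as you set it up.} You apply Lemma~\ref{lem:spread_goodconf} with a displacement $\abs{z}$ of order $\rho^2L_\bl$ relative to $\bbracket{x}$. The remaining time must then be at least about $\abs{z}T_\bl/(L_\bl\tilde v)$. Well-started blocks contain particles, and particles move at most $R$ per step, so necessarily $\tilde vL_\bl/T_\bl\le R$. Hence covering a distance $d$ through the percolation comparison costs at least $d/R$ steps. Now take a window starting $s$ steps before $n$. The only particle you are guaranteed there may sit at distance close to $sR$ from $x$, so its ball may be born that far away. The required time is then $\approx s$, which exceeds the available $s-\rho c_0T_\bl$. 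A ball born near the edge of the backward light cone can never be transported back to $\bbracket{x}$, whatever window lengths you choose.

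\textbf{The fix, which is what the paper does, is not to move the ball at all.} Apply Lemma~\ref{lem:spread_goodconf} recentred at the centre $y$ of the ball produced by the window, with $\ell=\rho$ and $z=0$.
\begin{itemize}
\item This needs only $(2\rho+1)T_\bl/\tilde v$ remaining time, regardless of where $y$ is.
\item It suffices because $D_\rho(x,\eta_n)\le\rho^2$ allows any centre $y\in B^\bl_{\rho^2}(\bbracket{x})$.
\item The localisation of $y$ comes solely from the light cone, by capping the total lookback at $c\rho^2$ with $c$ small (cf.~\eqref{eqn:nwaC}).
\end{itemize}
In particular, the lookback is $\asymp\rho^2$ because you need $\asymp\rho$ windows of length $\asymp\rho$, not because of any propagation distance.

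\textbf{Your product bound also needs repair.} In $\prod_j(1-\varepsilon'+Ce^{-c\rho})$, the event \emph{window $j$ succeeds and its ball survives until time $n$} is not measurable at the end of window $j$. So you cannot peel off windows one at a time with it. The paper separates the two effects. Let $B$ be the event of the lemma, and let $A_{k,y}$ be the event that $B^\bl_\rho(y)$, with $y\in B^\bl_{\rho^2}(\bbracket{x})$, contains at least $p_\nu V_\rho/2$ well-started blocks at time $n-kT_\bl$.
\begin{itemize}
\item On each $A_{k,y}$, it bounds $\P(B\cap A_{k,y})\le Ce^{-c\rho}$ using the Markov property at $n-kT_\bl$ and the $z=0$ propagation. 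A union bound over the $O(\rho^3)$ pairs $(k,y)$ follows.
\item On the complement of $\bigcup_{k,y}A_{k,y}$, it discards $\{D_\rho>\rho^2\}$ and keeps only the past-measurable events \emph{a particle near $x$ at the start of window $k$, and no good ball at its end}. Iterating the Markov property gives $(1-\varepsilon')^{c\rho}$.
\end{itemize}
Alternatively, a genuinely supermartingale version also works. Take as state \emph{some ball in $B^\bl_{\rho^2}(\bbracket{x})$ is good at the end of window $j$}, which is measurable at that time. This yields $q_j\le(1-\varepsilon')q_{j-1}+Ce^{-c\rho}$, and hence $q_k\le(1-\varepsilon')^k+Ce^{-c\rho}/\varepsilon'$, with the last window ending at $n$.
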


\begin{proof}
  The idea of the proof is simple. If there is a particle at $x$ at time $n$,
  then at every time $n'<n$ there must be a particle in $B_{(n-n')R}(x)$, by
  the last part of Assumption~\ref{ass:flow}. If $n-n'$ is large enough
  (approximately, larger than $c \rho T_\bl$), then by applying first
  Lemma~\ref{lem:manygoodblocks} and then Lemma~\ref{lem:spread_goodconf}, the
  particle present at time $n'$ has uniformly positive probability to create a
  ball of radius $\rho $ where $W(\eta_n)$ is large enough. By restricting
  $n'$ to a suitable subset of $[0,n]$ of size of order $\rho$, the events for
  various $n'$'s can be made essentially independent, which then yields the
  required upper bound.

  We proceed with the formal proof. Throughout, we assume that $\rho $ is
  not too large compared to $n$, so that all considered times are
  non-negative. It can be verified that there is a constant
  $\Cr{c:nwalb}<\infty$  so that the condition
  $\rho \le \Cr{c:nwalb} \sqrt n$ in the statement ensures this.

  Fix $\rho, n  \in \N$, $x\in \mathbb Z$, and denote by $B = B(n,\rho ,x)$
  the event in the statement,
  \begin{equation}
    \label{eqn:nwaB}
    \begin{split}
      B &= \big\{\eta_n(x) \neq 0, D_\rho (x,\eta_n) > \rho^2\big\}
      \\&= \big\{\eta_n(x) \neq 0\big\}
      \cap \bigg(\bigcap_{z\in B^\bl_{\rho^2}(\bbracket x)}
        \big\{\abs{W(\eta_n) \cap B_\rho (z)} < p_\nu V_\rho /2\big\}\bigg).
    \end{split}
  \end{equation}
  Let $t_0 = (r_1 \vee \rho ) c_0$ with $r_1, c_0$ as in
  Lemma~\ref{lem:manygoodblocks}, and let $\tilde v$ be as in
  Lemma~\ref{lem:percolation}. Set
  \begin{equation*}
    J = \bigg\{\ceil[\Big]{\frac{2\rho +1}{\tilde v}} + i t_0 :
      i \in \{0, \dots, \floor{C \rho} \}\bigg\},
  \end{equation*}
  where $C$ is a constant not depending on $\rho $ such that
  \begin{equation}
    \label{eqn:nwaC}
    (\max J + t_0) T_\bl R \le \rho^2 L_\bl/2.
  \end{equation}
  Consider the event
  \begin{equation*}
    A = \bigcup_{k\in J}
    \bigcup_{y\in B^\bl_{\rho^2}(\bbracket x) }
    \big\{\abs{W(\eta_{n-kT_\bl}) \cap B^\bl_\rho(y)}\ge p_\nu V_\rho /2\big\}
    \eqcolon  \bigcup_{k\in J}
    \bigcup_{y\in B^\bl_{\rho^2}(\bbracket x) } A_{k,y}.
  \end{equation*}
  Note that $A$ occurs if there is a time $s\in n - JT_\bl$ when there is a
  ball of radius $\rho $ (on the block scale) not far from $x$ where $\eta_s$
  contains many well-started blocks. We observe that
  \begin{equation}
    \label{eqn:Bub}
    \begin{split}
      \P_{\eta_0}(B)
      &\le \P_{\eta_0}\big(B \cap (\cup_{k,y} A_{k,y})\big)
      + \P_{\eta_0}(B \cap A^c )
      \\&\le \sum_{k\in J}
      \sum_{y\in B^\bl_{\rho^2}(\bbracket x)}
      \P_{\eta_0}(B \mid A_{k,y}) + \P_{\eta_0}(\{\eta_n(x) \neq 0\} \cap A^c ).
    \end{split}
  \end{equation}
  Therefore, since $\abs J =  C \rho +1 $ and
  $\abs{B^\bl_{\rho^2}(\bbracket x)} \le c \rho^2$, the lemma is proved if we
  can bound all probabilities on the right-hand side by
  $c e^{-c' \rho}$ for some finite positive constants $c,c'$.

  We first estimate the conditional probabilities appearing in the sum. Since
  $k T_\bl \ge (2\rho +1)T_\bl/\tilde v $ for every $k\in J$,
  Lemma~\ref{lem:spread_goodconf} applied with $\rho = \ell$, $z=0$, and
  $a=p_\nu V_\rho /2$ implies that (using also the translation invariance and
    the Markov property at time $n-kT_\bl$)
  \begin{equation*}
    \P_{\eta_0}\big(\abs{W(\eta_{n}) \cap B^\bl_\rho(y)}< p_\nu V_\rho /2
      \bigm|
      A_{k,y}\big)
    \le c e^{-c'\rho }
  \end{equation*}
  for some constants $c,c'\in (0,\infty)$. Therefore, using \eqref{eqn:nwaB},
  also $\P(B\mid A_{k,y}) \le c e^{-c' \rho}$ as required.

  We proceed with bounding the last probability in \eqref{eqn:Bub}. On the
  event $\eta_n (x) \neq 0$, because of the last part of
  Assumption~\ref{ass:flow}, for $n'<n$ there must be $y\in B_{(n-n')R}(x)$
  with $\eta_{n'}(y)\neq 0$. In particular, using that $C$ in the definition
  of $J$ satisfies \eqref{eqn:nwaC},
  \begin{equation*}
    \{\eta_n(x) \neq 0\}
    \subset \bigcap_{k\in J} D_k, \qquad \text{with }
    D_k=\big\{\exists z\in B_{\rho^2 L_\bl/2}(\bbracket x):
      \eta_{n-(k+t_0)T_\bl}(z)\neq 0\big\}.
  \end{equation*}
  Writing $A_k = \bigcup_{y\in B^\bl_{\rho^2}(\bbracket x)} A_{k,y}$ so that
  $A^c = \bigcap_{k\in J}A_k^c$, we obtain
  \begin{equation*}
    \{\eta_n(x) \neq 0\} \cap A^c
    \subset \bigcap_{k\in J} (D_k \cap A^c)
    \subset \bigcap_{k\in J} (D_k \cap A_k^c)
  \end{equation*}
  and thus
  \begin{equation*}
    \P_{\eta_0}(\{\eta_n(x) \neq 0\} \cap A^c )
    \le
    \P_{\eta_0}\Big(\bigcap_{k\in J} (D_k \cap A_k^c)\Big).
  \end{equation*}
  Note that the event $D_k$ only depends on $\eta_{n-(k+t_0)T_\bl}$ and $A_k$
  on $\eta_{n-kT_\bl}$. Therefore, by the Markov property applied on the time
  $s = n-(\min J + t_0)T_\bl$,
  \begin{equation*}
    \P_{\eta_0}\big(\{\eta_n(x) \neq 0\} \cap A^c \big)
    \le
    \E_{\eta_0}\bigg(\P_{\eta_s}\big( A_{\min J}^c\big);
      D_{\min J}\cap \bigcap_{k\in J\setminus \{\min J\}} (D_k \cap A_k^c)
      \bigg).
  \end{equation*}
  On the event $D_{\min J}$, due to our choice of $t_0 = c_0(r_1 \vee \rho )$,
  Lemma~\ref{lem:manygoodblocks} applied with $r = \rho $ and $\ell = 1$
  implies that $\P_{\eta_s}\big(A_{\min J}^c\big)\le 1-\varepsilon '$. Using
  this, and then by  applying the same argument iteratively, it follows that
  \begin{equation*}
    \begin{split}
      \P_{\eta_0}\big(\{\eta_n(x) \neq 0\} \cap A^c \big)
      &\le (1- \varepsilon ')
      \P_{\eta_0}\bigg(
        \bigcap_{k\in J\setminus\{\min J\}} (D_k \cap A_k^c)\bigg)
      \\&\le (1- \varepsilon' )^{\abs J} = (1- \varepsilon ')^{C\rho +1 }
      \le e^{-c'\rho }.
    \end{split}
  \end{equation*}
  This proves the required bound on the last probability in \eqref{eqn:Bub}
  and thus completes the proof of the lemma.
\end{proof}

The second technical lemma states that a point in a `coupled region' cannot
be unoccupied for a long time. For this, recall the notation from
Section~\ref{sec:comparison_perc}, in particular~\eqref{eqn:Kdef}.

\begin{lemma}
  \label{lem:incoupled}
  For $\eta_0\in E^\Z$, $x\in \Z$ and $0\le t \in \L_\ttime$
  let $A$ be the event
  \begin{equation*}
    A \coloneq \{\bbracket x \in \tilde K^{W(\eta_0)}_t \}.
  \end{equation*}
  Then, there are universal constants $C,c\in (0,\infty)$ such that, for
  every $T\ge t$,
  \begin{equation*}
    \P_{\eta_0}(\eta_s(x) = 0 \text{ for all }s\in [t,T], A
      ) \le C e^{-c \sqrt{T-t}}.
  \end{equation*}
\end{lemma}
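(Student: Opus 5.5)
Write $z_0=\bbracket x$, so that $\abs{x-z_0}\le L_\bl/2\le L_\bl$. The plan is to use the stationary percolation process $\tilde Z^\nu$ to produce many well-started blocks at $z_0$ during $[t,T]$, and then to use Assumption~\ref{ass:coarse}(a) to produce a particle at $x$ from each of them.

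On $A$ we have $\bbracket x\in\tilde K^{W(\eta_0)}_t$, so $\tilde Z^{W(\eta_0)}_s(z_0)=\tilde Z^\nu_s(z_0)$ for every $s\in\L_\ttime$ with $s\ge t$. (By definition of $\tilde K^A_t$ as a union, $z_0\in\tilde K^{w}_t$ for some $w\in W(\eta_0)$, so agreement holds for $\tilde Z^{w}$; since $\tilde Z^{w}\le\tilde Z^{W(\eta_0)}\le\tilde Z^\nu$, it also holds for $\tilde Z^{W(\eta_0)}$.) By \eqref{eqn:fromZtoeta}, whenever $\tilde Z^\nu_s(z_0)=1$ with $s\ge t$, we get $z_0\in W(\eta_s)$, i.e.\ $\eta_s\in G^\eta_\loc(z_0)$.

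Set $m=\floor{\sqrt{T-t}}$ and choose block times $t\le s_1<\dots<s_m\le T-1$ in $\L_\ttime$, spaced by about $\sqrt{T-t}$. We may assume $T-t$ is large, adjusting $C$ otherwise. The event in the lemma is contained in the union of two events:
\begin{equation*}
E_1=\{\tilde Z^\nu_{s_i}(z_0)=0\ \forall i\le m\}
\end{equation*}
and
\begin{equation*}
E_2=\{\exists\, k\ge \sqrt m:\ \tilde Z^\nu_{s_{i}}(z_0)=1 \text{ for } k \text{ indices } i,\ \eta_{s_i+1}(x)=0 \text{ for all of them}\}\cap A.
\end{equation*}
This split is a little wasteful; the alternative is to only require one occupied index, with a Markov argument giving geometric decay in the number of indices.

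Bounding $E_1$ is easy: Lemma~\ref{lem:percolation}\eqref{perc:manyzeros} gives $\P(E_1)\le\tilde C e^{-\tilde\gamma m}$. The harder event is $E_2$. It says that $\tilde Z^\nu$ is occupied at $(z_0,s_i)$ for many $i$, hence $\eta_{s_i}$ is well started at $z_0$, yet no particle appears at $x$ at the next times $s_i+1$. Assumption~\ref{ass:coarse}(a) gives conditional probability at most $1-\varepsilon_2$ for each such failure, given $\eta_{s_i}$. The difficulty is that $\tilde Z^\nu_{s_i}(z_0)$ depends on the future $\tilde Y$'s and on $\nu$, so it is not $\mathcal F_{s_i}$-measurable.

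I would avoid this by replacing the $\tilde Z^\nu$ condition with the $\mathcal F_{s_i}$-measurable condition $\eta_{s_i}\in G^\eta_\loc(z_0)$, which it implies on $A$. We then iterate: let $\sigma_j$ be the successive indices at which $\eta_{s_i}\in G^\eta_\loc(z_0)$. These are stopping times for the filtration generated by $\eta$ and $U$, since $s_i+1<s_{i+1}$. At each of them, a failure $\eta_{s_i+1}(x)=0$ has conditional probability at most $1-\varepsilon_2$, by the strong Markov property (which holds in the flow representation of Assumption~\ref{ass:flow}). Consequently
\begin{equation*}
\P\big(\text{at least } k \text{ such indices, all failing}\big)\le(1-\varepsilon_2)^k.
\end{equation*}

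With $k=m$ this gives the bound directly. Combining the two events with $k=m$ (so that $E_1$ is the complement of having at least one good index, with the refined count), we obtain
\begin{equation*}
\P\le \P(\text{fewer than } m/2 \text{ good indices})+(1-\varepsilon_2)^{m/2}.
\end{equation*}
The first term should be controlled by a large deviation bound for the number of zeros of $\tilde Z^\nu$ along the column $\{z_0\}\times\{s_i\}$, for which Lemma~\ref{lem:percolation}\eqref{perc:manyzeros} is the natural tool: a union over subsets gives $\binom{m}{m/2}\tilde C e^{-\tilde\gamma m/2}$. This is only useful if $\tilde\gamma>2\log 2$, which is the main obstacle. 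To get around it I would keep the single-index version: $E_1$ is handled by \eqref{perc:manyzeros} with exponent $m$. The number of good indices is then handled by bounding $\P(\text{at most } j \text{ good indices})$ by summing over the $\binom{m}{j}$ positions with $j\le\delta m$, choosing $\delta$ small so that $\binom{m}{\delta m}e^{-\tilde\gamma(1-\delta)m}\le e^{-cm}$. On the complement there are at least $\delta m$ good indices, all of which fail with probability at most $(1-\varepsilon_2)^{\delta m}$. Altogether we get $Ce^{-cm}=Ce^{-c\sqrt{T-t}}$.
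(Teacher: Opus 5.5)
Your final argument, in the last paragraph, is correct, but it is organised differently from the paper's proof.

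\textbf{The paper's route.} It splits $[t,T]$ into $k\approx\sqrt{T-t}$ consecutive intervals of length $\ell\approx\sqrt{T-t}$. It applies Lemma~\ref{lem:percolation}\eqref{perc:manyzeros} only to the contiguous run of block times inside each interval. This shows that, on $A$, every interval contains a time at which $\bbracket{x}$ is well started; the union bound over intervals costs $k\,Ce^{-c\ell}$. It then uses Assumption~\ref{ass:coarse}(a) at the first such time in each interval, which gives $(1-\varepsilon_2)^k$.

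\textbf{Your route.} You work with a single grid of block times and require only a positive fraction $\delta$ of them to have $\tilde Z^\nu(z_0)=1$.
\begin{itemize}
\item Applying Lemma~\ref{lem:percolation}\eqref{perc:manyzeros} to each set of $m-j$ zeros and summing over the $\binom{m}{j}$ choices with $j<\delta m$ works once $\delta$ is small enough that the entropy factor is beaten by $e^{-\tilde\gamma(1-\delta)m}$. This is the right fix for your $\binom{m}{m/2}$ obstacle.
\item Iterating over the successive grid times with $\eta_{s_i}\in G^\eta_\loc(z_0)$ is a legitimate strong Markov argument. These are stopping times, consecutive ones differ by at least $T_\bl\ge 1$, and each failure $\eta_{s_i+1}(x)=0$ is measurable before the next conditioning. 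Assumption~\ref{ass:coarse}(a) applies because $\abs{x-z_0}\le L_\bl/2$.
\end{itemize}

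\textbf{What each buys.} The paper's route needs no combinatorics. However, its union bound over intervals is exactly what forces the square root: it has to balance $ke^{-c\ell}$ against $(1-\varepsilon_2)^k$ with $k\ell\approx T-t$. Your argument does not need spaced-out grid times at all. If you take the grid to be all of $\L_\ttime\cap[t,T-1]$, the same computation gives $Ce^{-c(T-t)}$. That is stronger than what is stated, although the paper only ever uses the $\sqrt{T-t}$ version.

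\textbf{Two clean-ups.}
\begin{itemize}
\item The parenthetical inequality $\tilde Z^{W(\eta_0)}\le\tilde Z^\nu$ is not provided by the couplings in the paper, and you do not need it. You only use $\tilde Z^\nu_s(z_0)=1\Rightarrow\tilde Z^{w}_s(z_0)=1\Rightarrow\tilde Z^{W(\eta_0)}_s(z_0)=1\Rightarrow z_0\in W(\eta_s)$, the last step by \eqref{eqn:fromZtoeta}.
\item Delete the $E_1/E_2$ split, the ``with $k=m$'' remark and the $\binom{m}{m/2}$ detour, and present only the final argument.
\end{itemize}
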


\begin{proof}
  Let $\ell \coloneq T_\bl\floor{\sqrt{T-t}/ T_\bl}\in \L_\ttime$,
  $t_i \coloneq t+i \ell\in \L_\ttime$, and $k\coloneq \max\{i: t_{i+1}\le T\}$.
  Note that $k$ can be approximated by $\sqrt{T-t}$ and that the intervals
  $I_i=[t_{i},t_{i+1})$, $i=0,\dots,k$ are disjoint and contained in
  $[t,T]$, by construction. For every $0\le i \le k$, we set
  \begin{equation*}
    \tau_i
    = \inf\big\{n\ge 0: n\in \L_\ttime, \bbracket x \in W(\eta_{t_{i}+n})\big\}.
  \end{equation*}
  By \eqref{eqn:Kdef}, if $\bbracket x \in \tilde K^{W(\eta_0)}_t$,
  $\bbracket x \in \tilde K^{W(\eta_0)}_s$ for all $s\ge t$. Therefore, by
  Lemma~\ref{lem:percolation}\eqref{perc:manyzeros}, due to
  \eqref{eqn:fromZtoeta},
  \begin{equation*}
    \P_{\eta_0}\big( \tau_i \ge \ell,A\big) \le C e^{-c \ell}.
  \end{equation*}
  The probability in the statement of the lemma then satisfies
  \begin{equation}
    \label{eqn:iop}
    \begin{split}
      &\P_{\eta_0}\big(\eta_s(x) = 0 \text{ for all }s\in [t,T],A\big)
      \\&\le \P_{\eta_0}(A,\exists i\le  k: \tau_i \ge \ell)
      + \P_{\eta_0}\Big(\bigcap_{i\le k} \{\tau_i < \ell\}
        \cap \{\eta_s(x) = 0 \,\forall s \in I_i\}\Big).
    \end{split}
  \end{equation}
  The first probability on the right-hand side can be bounded by
  \begin{equation*}
    \P_{\eta_0}\big(A,\exists i\le k: \tau_i \ge \ell\big)
    \le k C e^{-c\ell} \le \sqrt{T-t}C e^{-c\sqrt {T-t}}
    \le Ce^{-c\sqrt{T-t}}.
  \end{equation*}
  Let $\mathcal{F}_n = \sigma(\{ \eta_j : j \le n \})$.
  The second probability in \eqref{eqn:iop} satisfies
  \begin{equation*}
    \begin{split}
      &\P_{\eta_0}\Big(\bigcap_{i\le k} \{\tau_i < \ell\}
        \cap \{\eta_s(x) = 0 \,\forall s \in I_i\}\Big)
      \\&= \E_{\eta_0}\bigg[
        \P_{\eta_0}\big(\tau_k<\ell, \eta_s(x) = 0
          \,\forall s\in I_k \bigm| \mathcal F_{t_k}\big)
        \prod_{i=0}^{k-1}
        \ind\big\{\tau_i<\ell, \eta_s(x)=0\,\forall s\in I_i\big\}\bigg].
    \end{split}
  \end{equation*}
  To bound this, we observe that
  \begin{equation*}
    \begin{split}
      \P_{\eta_0}&\big(\tau_{k}<\ell, \eta_s(x) = 0 \,\forall s\in I_k
        \bigm| \mathcal F_{t_k}\big)
      \\&\le  \E_{\eta_0}\big( \P_{\eta_0}(\eta_{t_k+\tau_k+1}(x) =0
          \mid \mathcal F_{t_k+\tau_k})
        \ind\{\tau_k<\ell\}\bigm| \mathcal F_{t_k}\big).
    \end{split}
  \end{equation*}
  By definition of $\tau_k$, $\bbracket x \in W(\eta_{t_k+\tau_k})$ and thus
  $\eta_{t_k+\tau_k} \in G_\loc^\eta(\bbracket x)$. Therefore, by
  Assumption~\ref{ass:coarse}(a), the probability inside the expectation
  is smaller than $1-\hat \varepsilon$. Inserting this back, we obtain
  \begin{equation*}
    \P_{\eta_0}\Big(\bigcap_{i\le k}\{\tau_i < \ell\} \cap \{\eta_s(x) = 0
        \,\forall s \in I_i\}\Big)
    \le (1-\hat \varepsilon)
    \P_{\eta_0}\Big(\bigcap_{i\le k-1} \{\tau_i < \ell\}
      \cap \{\eta_s(x) = 0 \,\forall s \in I_i\}\Big).
  \end{equation*}
  Applying the same argument $k$-times then yields
  \begin{equation*}
    \P_{\eta_0}\Big(\bigcap_{i\le k} \{\tau_i < \ell\} \cap \{\eta_s(x) = 0
        \,\forall s \in I_i\}\Big) \le (1-\hat \varepsilon)^k
    \le C e^{-c\sqrt{T-t}},
  \end{equation*}
  which completes the proof.
\end{proof}

The last technical lemma gives some preliminary estimates on $\tau_\hit(x)$.

\begin{lemma}
  \label{lem:hittingtime}
  There are $0<v_1<v_2<\infty$ and $\Cl{c:htsmall},\Cl{c:htbig}\in(0,\infty)$
  such that for every $\eta_0$ which is non-zero only at the origin, every
  $x\in \N$, and $a>0$,
  \begin{equation*}
    \begin{split}
      \P_{\eta_0}(\tau_\hit(x)<x/v_2) &= 0,
      \\ \P_{\eta_0}(x/v_1 + a \le \tau_\hit(x)< \infty)
      & \le \Cr{c:htbig} e^{-\Cr{c:htsmall}\sqrt {x + av_1}}.
    \end{split}
  \end{equation*}
\end{lemma}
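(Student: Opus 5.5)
The first claim is a deterministic finite-range statement. By the last part of Assumption~\ref{ass:flow}, if $\eta_0$ vanishes outside the origin, then $\eta_n$ vanishes outside $B_{nR}(0)$ for every $n$; this follows by induction on $n$. Hence $\tau_\hit(x)\ge x/R$ surely, and we may take $v_2=R$, or any $v_2\ge R$.

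For the second claim the plan is to combine the comparison with oriented percolation and Lemma~\ref{lem:incoupled}. Let $T=\tau_\hit(x)$. Split the event $\{x/v_1+a\le T<\infty\}$ according to whether the process reaches a well-started configuration early.

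\emph{Step 1 (get started).} Apply Lemma~\ref{lem:manygoodblocks} repeatedly, with $\ell$ of order $1$ and $r=r_1$, on successive time windows of length $r_1c_0T_\bl$, using the Markov property. On the event that $\eta$ survives up to time $m$, the probability that no window in $[0,m]$ produces a ball $B^\bl_{\rho}(z)$ with at least $p_\nu V_\rho/2$ well-started blocks is at most $(1-\varepsilon')^{cm}$. Since $T<\infty$ forces survival up to time $T$, we choose $m\asymp \sqrt{x+av_1}$ and obtain this bad event with probability at most $e^{-c\sqrt{x+av_1}}$. To keep exponential tails while making the number of well-started blocks large, I would rather take $\rho\asymp\sqrt{x+av_1}$. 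In that case one success of Lemma~\ref{lem:manygoodblocks} needs time of order $\rho$, so we need about $\rho$ attempts in total time $\rho^2$. This is fine provided $\rho^2\ll x/v_1$, which holds if we choose $\rho=\delta\sqrt{x+av_1}$ with $\delta$ small. The position $z$ is random but lies within distance $O(\rho^2)$ of the origin, because particles move at most at speed $R$.

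\emph{Step 2 (percolation reaches $x$).} Condition on the configuration at the success time $s\le C\rho^2$, which has $a'=p_\nu V_\rho/2\asymp\rho$ well-started blocks near $z$. Apply Lemma~\ref{lem:shift_blocks} via \eqref{eqn:fromZtoeta}, with $\ell=O(\rho^2)$ and target $\bbracket x$. Outside an event of probability $\tilde C'(1+V_0)e^{-\tilde\gamma'a'}\le Ce^{-c\sqrt{x+av_1}}$, we get $\bbracket x\in\tilde K^{W(\eta_s)}_{t}$ at a time $t-s\le T_\bl(x/L_\bl+C\rho^2)/\tilde v$, which is at most $x/(2v_1)$ for a suitable $v_1<\tilde vL_\bl/T_\bl$ and small $\delta$. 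The extinction alternative is handled in the same way by the second bound of Lemma~\ref{lem:spread_goodconf}.

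\emph{Step 3 (site actually occupied).} Apply Lemma~\ref{lem:incoupled} on the interval $[t,\,x/v_1+a]$, whose length is at least $x/(2v_1)+a$. The probability that $\eta_\cdot(x)$ stays zero throughout is at most $Ce^{-c\sqrt{x/(2v_1)+a}}$. Occupation of $x$ before time $x/v_1+a$ gives $\tau_\hit(x)<x/v_1+a$. Summing the three error terms yields the bound $\Cr{c:htbig}e^{-\Cr{c:htsmall}\sqrt{x+av_1}}$.

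The main obstacle is Step 1. We must obtain a linear-in-$\rho$ number of well-started blocks within time $o(x)$, with stretched-exponential failure probability, on the mere event of eventual hitting. The point to check is that survival up to each window can be used in the Markov iteration. Surviving to time $T$ is not known at earlier stopping times, but $\{T<\infty\}$ implies nonemptiness at all times up to $T$, and $T\ge x/R\ge m$ by the first claim. So the iteration over windows that all lie before $x/R$ goes through.
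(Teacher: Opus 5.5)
Your argument is correct and is essentially the paper's proof. Both proofs create, at an early time, a ball of radius of order $\sqrt{x/v_1+a}$ containing many well-started blocks, use Lemma~\ref{lem:shift_blocks} to bring $\bbracket{x}$ into the coupled region well before time $x/v_1+a$, and conclude with Lemma~\ref{lem:incoupled}. The only difference is that the paper obtains the blocks at the deterministic time $\bbfloor{\varepsilon t}$ from Lemma~\ref{lem:neverwalkalone} plus a union bound over the possible occupied sites, whereas you re-derive this by a forward iteration of Lemma~\ref{lem:manygoodblocks} stopped at the first success.

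One bookkeeping correction: as in the paper, compare all time costs with $t=x/v_1+a$, not with $x/v_1$ or $x/R$. Your claims that $\rho^2\ll x/v_1$ and that the elapsed time is at most $x/(2v_1)$ fail when $a\gg x$, because $\rho^2=\delta^2(x+av_1)=\delta^2v_1t$. With this accounting (and $v_1\le \tilde vL_\bl/(4T_\bl)$), Steps~1--2 take at most $x/(4v_1)+C\delta^2t+C$. The nonemptiness you use in Step~1 then comes from $\tau_\hit(x)\ge t$ on the event in question, and Step~3 still has a window of length at least $t/2$.
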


\begin{proof}
  The first claim is obvious with $v_2 = R$, since our process runs in
  discrete time and has a finite range by Assumption~\ref{ass:flow}.

  To prove the second claim we recall \eqref{eq:vlowerbound}, fix
  \begin{equation}
    \label{eqn:qwerty}
    v_1 = \frac{\tilde v}8 \cdot \frac{L_\bl}{ T_\bl},\qquad
    t=\frac{x}{v_1} + a, \qquad
    \varepsilon = \frac{\tilde v L_\bl}{8 T_\bl R} \wedge \frac 14 , \qquad
    \delta = \frac 12\Cr{c:nwalb}\sqrt \varepsilon.
  \end{equation}
  Let $H$ be the event
  \begin{equation*}
    H= \Big\{ \exists y\in \L_\sspace, \abs {y}\le \varepsilon t R,
      \abs[\big]{B^\bl_{\delta \sqrt{ t}}(y) \cap
        W(\eta_{\bbfloor{\varepsilon t}})}
      \ge p_\nu V_{\delta \sqrt{t}}/2\Big\},
  \end{equation*}
  that is, at some intermediate time $\bbfloor{\varepsilon t}$ there is a
  ball of radius $\delta \sqrt t$ (on the block scale) containing many
  well-started blocks.
  Then,
  \begin{equation}
    \label{eqn:poi}
    \big\{t \le \tau_\hit(x) < \infty\big\} \subset
    \big(H^c \cap \{t\le \tau_\hit(x) < \infty\}\big)
    \cup \big(H \cap \{\tau_\hit(x) \ge t\}\big).
  \end{equation}
  On $t\le \tau_\hit (x)< \infty$ there must be at least one particle in
  $\eta_{\bbfloor{\varepsilon t}}$. In addition, by the union bound and
  Lemma~\ref{lem:neverwalkalone}, applied with $n=\bbfloor{\varepsilon t}$
  and $\rho  = \delta \sqrt t$ (which is allowed due to the choice of
    $\delta $),
  \begin{equation*}
    \P_{\eta_0}\big(\exists y\in \Z: \eta_{\bbfloor{\varepsilon t}}(y)\neq 0,
      \abs{y}
      \le  \varepsilon tR, D_{\delta \sqrt t} (y,\eta_{\bbfloor{\varepsilon
            t}})> \delta^2 t\big)
    \le c R \varepsilon  t \Cr{c:nwabig} e^{-\Cr{c:nwasmall} \delta \sqrt t}
    \le C e^{-c\sqrt t}.
  \end{equation*}
  Together these claims imply that
  the first event on the right-hand side of \eqref{eqn:poi} satisfies
  \begin{equation*}
    \P_{\eta_0}(H^c \cap \{t\le \tau_\hit(x) < \infty\})
    \le C e^{-c  \sqrt t}.
  \end{equation*}
  On the other hand, on $H$, with a suitable $y$ which makes the event $H$
  occur, for
  \(
    A \coloneq B^\bl_{\delta \sqrt t}(y)
    \cap W(\eta_{\bbfloor{\varepsilon t}})
  \),
  we have $\abs A \ge c \sqrt t$ and
  $A\subset B_{\bbfloor{\varepsilon t} R}(0)$.  By
  applying the Markov property on time $\bbfloor{\varepsilon t}$, using
  Lemma~\ref{lem:shift_blocks} (with this $A$,
    $\ell = \varepsilon t R/L_\bl$, $\rho = \delta \sqrt t$, $z=x = (t-a)v_1$
    and with $t$ there being $(\frac 34-\varepsilon )t\ge t/2$ for
    $t$ as in \eqref{eqn:qwerty}, so that the condition of this
    lemma is verified by our choice of constants), we obtain
  \begin{equation*}
    \P_{\eta_0}\Big(H
      \cap \Big\{\bbracket x\notin \tilde K^A_{\bbfloor{ 3t/4}}\Big\}\Big)
    \le C e^{-c \sqrt t}.
  \end{equation*}
  This implies that with high probability $(\bbracket x, \bbfloor{3t/4})$ is in
  the coupled region of the oriented percolation.
  Finally, Markov property and Lemma~\ref{lem:incoupled} yield
  \begin{equation*}
    \P_{\eta_0}\Big(\tau_\hit(x) \ge t
      \Bigm| \bbracket x\in \tilde K^A_{\bbfloor{ 3t/4 }}\Big)
    \le C e^{-c \sqrt t}.
  \end{equation*}
  Combining these together implies that the second event on the right-hand
  side of \eqref{eqn:poi} satisfies the same upper bound as the first one.
\end{proof}

We can now prove Proposition~\ref{prop:hitandblock}.

\begin{proof}[Proof of Proposition~\ref{prop:hitandblock}]
  Recall the constants $v_1$ and $v_2$ from Lemma~\ref{lem:hittingtime}. Observe
  that
  \begin{equation*}
    \begin{split}
      &\big\{D_\rho(x, \eta_{\tau_\hit(x)})> \rho^2,
        \tau_\hit(x)<\infty\big\}
      \\&\subset
      \big\{x/v_1 \le \tau_\hit(x) < \infty\big\}
      \cup\bigg( \bigcup_{t\in [x/v_2,x/v_1] }
        \bigcup_{y\in [-tR,tR]}
        \big\{\eta_t(y)\neq 0, D_\rho (y, \eta_t) > \rho^2\big\}\bigg).
    \end{split}
  \end{equation*}
  The probability of the first event is smaller than
  $\Cr{c:htbig} e^{-\Cr{c:htsmall}\sqrt x}$ by Lemma~\ref{lem:hittingtime}.
  The probability of the second event is bounded by
  Lemma~\ref{lem:neverwalkalone}, using the fact that we are taking union of
  at most $O(x^2)$ events, by
  \begin{equation*}
    c \Cr{c:nwabig} x^2 e^{-\Cr{c:nwasmall} \rho }
    \le c \Cr{c:nwabig} e^{-\Cr{c:nwasmall} \rho + 2 \log x }
  \end{equation*}
  for some $c < \infty$.
  Combining the last two estimates easily completes the proof.
\end{proof}

\begin{remark}
  \label{rem:hitandblock}
  Proposition~\ref{prop:hitandblock} ensures that at time $\tau_\hit(x)$, the
  configuration contains many well-started blocks not far from $x$ with
  suitably controlled probability. Note that the stopping time $\tau_\hit(x)$
  will in general not be concentrated on $\L_\ttime$. In some of the
  arguments below, it will be more convenient to work with the fixed
  space-time grid $\L = \L_\sspace \times \L_\ttime$ and not with a version
  which is shifted in the time direction by some $t \in \{1,2,\dots, T_\bl-1\}$.

  In fact, under the assumptions of Proposition~\ref{prop:hitandblock} we
  also have
  \begin{equation*}
    \P_{\eta_0}\big( D_\rho(x, \eta_{\bbfloor {\tau_\hit(x)} })> \rho^2,
    \tau_\hit(x)<\infty\big)
    < \Cr{c:liverbig}\big(e^{-\Cr{c:liversmall} \sqrt x}
    + e^{-\Cr{c:liversmall} \rho + 2\log x}\big).
  \end{equation*}
\end{remark}

\begin{proof}
  Note that by the last part of Assumption~\ref{ass:flow} and the
  definition of $\tau_\hit(x)$, the event $\tau_\hit(x)<\infty$
  implies that
  $\{ y : \eta_{\bbfloor{ \tau_\hit(x) }}(y) \neq 0 \} \cap
  B_{(\tau_\hit(x) - \bbfloor{ \tau_\hit(x) })R}(x) \neq
  \emptyset$.  We can then repeat the arguments from the proofs of
  Lemma~\ref{lem:neverwalkalone} and of
  Proposition~\ref{prop:hitandblock}, beginning with an (arbitrarily
  chosen) particle from
  $B_{(\tau_\hit(x) - \bbfloor{ \tau_\hit(x) })R}(x)$ in the
  configuration at time $\bbfloor{ \tau_\hit(x) }$ and thus
  enforce that all the blocks we use in the construction are aligned
  with $\L$.
\end{proof}

We close this section with the following two observations that follow from the
proof of Proposition~\ref{prop:hitandblock}. The first one gives moment
estimates for $\tau_\hit(x)$.

\begin{corollary}\label{cor:hit_moments}
  For every $\eta_0$ which is non-zero only at the origin and every $k>0$
  there is a constant $c = c(k)$ such that for every $x\in \N$
  \begin{equation*}
    \E_{\eta_0}[\tau_\hit(x)^k \ind_{\tau_\hit(x)<\infty}] \le c x^k.
  \end{equation*}
\end{corollary}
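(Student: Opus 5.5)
The bound will come from the layer-cake formula
\[
\E_{\eta_0}[\tau_\hit(x)^k \ind_{\tau_\hit(x)<\infty}] = \int_0^\infty k s^{k-1}\,\P_{\eta_0}(s\le \tau_\hit(x)<\infty)\,\D s ,
\]
combined with the tail estimate of Lemma~\ref{lem:hittingtime}. Let $v_1$ be as in that lemma. We split the integral at $s_0 = x/v_1$.

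On $[0,s_0]$ we bound the probability by $1$. This part contributes at most $s_0^k = v_1^{-k}x^k$.

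On $[s_0,\infty)$ we write $s = x/v_1 + a$ with $a\ge 0$. Lemma~\ref{lem:hittingtime} gives
\[
\P_{\eta_0}(s\le\tau_\hit(x)<\infty)\le \Cr{c:htbig}e^{-\Cr{c:htsmall}\sqrt{x+av_1}}\le \Cr{c:htbig}e^{-\Cr{c:htsmall}\sqrt{av_1}}.
\]
Using $(x/v_1+a)^{k-1}\le 2^{(k-2)\vee 0}\big((x/v_1)^{k-1}+a^{k-1}\big)$ (for $k\ge1$; for $k<1$ use $s^{k-1}\le s_0^{k-1}$ on this range), the second part is bounded by
\[
C\Big( x^{k-1}\int_0^\infty e^{-c\sqrt{a}}\,\D a+\int_0^\infty a^{k-1}e^{-c\sqrt a}\,\D a\Big)\le C(x^{k-1}+1).
\]
Both integrals are finite because of the stretched-exponential decay.

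Since $x\ge1$, we have $x^{k-1}+1\le 2x^k$ when $k\ge1$. When $k<1$ the second part is bounded by $C x^{k-1}\le Cx^k$, with constants depending only on $k$. Adding the two parts gives $\E_{\eta_0}[\tau_\hit(x)^k\ind_{\tau_\hit(x)<\infty}]\le c(k)x^k$.

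The constants in Lemma~\ref{lem:hittingtime} are uniform over initial conditions that are non-zero only at the origin, so $c$ depends on $k$ alone. The only point that needs care is the case $k<1$, and it is handled by the monotonicity of $s^{k-1}$ as indicated above.
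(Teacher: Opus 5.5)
Your proposal is correct and is essentially the paper's proof. Both use the layer-cake formula, split the integral at $x/v_1$, and apply the stretched-exponential tail bound of Lemma~\ref{lem:hittingtime} on the remaining range; you simply spell out the final integral estimate, including the case $k<1$, which the paper leaves to the reader.
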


\begin{proof}
  From Lemma~\ref{lem:hittingtime} it follows that
  \begin{align*}
    \E_{\eta_0}[\tau_\hit(x)^k \ind_{\tau_\hit(x)<\infty}]
    & = \int_0^\infty k t^{k-1} \P(t < \tau_\hit(x) < \infty) \,dt
    \\ & \leq (x/v_1)^k
    + \Cr{c:htbig} \int_0^\infty k (x/v_1+a)^{k-1}
    e^{-\Cr{c:htsmall} \sqrt{x+a v_1}} \, d a \leq c x^k,
  \end{align*}
  which proves the claim.
\end{proof}

The second observation follows from Lemma~\ref{lem:neverwalkalone} and
implies that when the process is conditioned on survival
(cf.~\eqref{eqn:barPzeta}), it will
produce many well-started blocks with probability very close to $1$.

\begin{lemma}
  \label{lem:blocks_somewhere}
  There exist $\Cl{c:bsbig},  \Cl{c:bstilde} \in (0,\infty)$
  such that for all $n \in \N$ and any $\zeta \in E^\Z$ which is non-zero
  only at the origin,
  \begin{equation*}
    \bar{\P}_{\zeta}\big( D_{\Cr{c:nwalb}\sqrt{n}}(0, \eta_n)
      \leq \Cr{c:bstilde} n \big)
    \geq 1 - \Cr{c:bsbig} e^{-\Cr{c:nwasmall} \sqrt n},
  \end{equation*}
  where the constants $\Cr{c:nwasmall}, \Cr{c:nwalb}$ come from
  Lemma~\ref{lem:neverwalkalone}.
\end{lemma}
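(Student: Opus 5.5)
Under survival, at time $n$ there is at least one particle, and it lies in $B_{nR}(0)$ by the finite-range part of Assumption~\ref{ass:flow}. Lemma~\ref{lem:neverwalkalone} then says that around any such particle there are, with high probability, many well-started blocks nearby. So the proof is a union bound over the possible positions of that particle, followed by removing the conditioning on survival.

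Set $\rho=\Cr{c:nwalb}\sqrt n$. The precondition $\rho\le\Cr{c:nwalb}\sqrt n$ of Lemma~\ref{lem:neverwalkalone} then holds. If $\rho$ is not an integer, use $\floor{\rho}$; this only changes constants.

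\textbf{Step 1 (reduction to a union bound).} On $\{\tau_\ext=\infty\}$ we have $\eta_n\not\equiv 0$, and every occupied site lies in $B_{nR}(0)$. Let $y$ be an occupied site with $D_\rho(y,\eta_n)\le\rho^2$, so some $z\in B^\bl_{\rho^2}(\bbracket y)$ has at least $p_\nu V_\rho/2$ well-started blocks in $B^\bl_\rho(z)$. Then
\[
\abs{z}\le nR+L_\bl+\rho^2L_\bl\le Cn,
\]
since $\rho^2=\Cr{c:nwalb}^2 n$. This gives $D_\rho(0,\eta_n)\le \Cr{c:bstilde} n$ for a suitable $\Cr{c:bstilde}$. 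Hence
\[
\{\tau_\ext=\infty\}\cap\{D_\rho(0,\eta_n)>\Cr{c:bstilde}n\}\subset\bigcup_{y\in B_{nR}(0)}\{\eta_n(y)\ne0,\ D_\rho(y,\eta_n)>\rho^2\}.
\]

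\textbf{Step 2 (bounding the union).} Apply Lemma~\ref{lem:neverwalkalone} to each of the $2nR+1$ events. The unconditional probability is at most
\[
(2nR+1)\,\Cr{c:nwabig}\,e^{-\Cr{c:nwasmall}\rho}.
\]

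\textbf{Step 3 (conditioning on survival).} By Remark~\ref{rem:survival}, $\P_\zeta(\tau_\ext=\infty)\ge c_*>0$ uniformly over $\zeta$. Dividing by $c_*$ bounds the complementary $\bar\P_\zeta$-probability by $C n e^{-\Cr{c:nwasmall}\Cr{c:nwalb}\sqrt n}$.

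The main obstacle is the exponent. The statement asks for exactly $e^{-\Cr{c:nwasmall}\sqrt n}$, whereas Step 2 produces $e^{-\Cr{c:nwasmall}\Cr{c:nwalb}\sqrt n}$ together with a polynomial prefactor $n$. I expect this is resolved by the following two observations.

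First, the polynomial factor is harmless. If the target is read loosely (exponent up to constants), $n e^{-c\sqrt n}\le C e^{-c'\sqrt n}$ for any $c'<c$, and $C$ can be absorbed into $\Cr{c:bsbig}$.

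Second, to match the exact constant, I would revisit the proof of Lemma~\ref{lem:neverwalkalone}. Its decay comes from $(1-\varepsilon')^{\abs J}$ with $\abs J\sim C\rho$ and from Lemma~\ref{lem:spread_goodconf}. One can reinterpret $\Cr{c:nwasmall}$ as the rate in $\sqrt n$ by taking it small enough relative to $\Cr{c:nwalb}$. This is legitimate because both constants were chosen freely there.

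Small $n$ are handled trivially by enlarging $\Cr{c:bsbig}$ so that the right-hand side is nonpositive.
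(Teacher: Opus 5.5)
Your proposal is correct and is essentially the paper's proof. The paper likewise includes $\{D_{\Cr{c:nwalb}\sqrt n}(0,\eta_n)>\Cr{c:bstilde}n\}$ in a union over $x\in[-nR,nR]$ of the events of Lemma~\ref{lem:neverwalkalone}, applies a union bound, and divides by the uniform survival probability from Remark~\ref{rem:survival}. The paper's own last line, $\Cr{c:bsbig}nRe^{-\Cr{c:nwasmall}\sqrt n}$, glosses over the polynomial prefactor and the missing factor $\Cr{c:nwalb}$ in the exponent that you flagged, and your fix does make it literal: keep the genuine rate $c$ produced inside the proof of Lemma~\ref{lem:neverwalkalone} and declare $\Cr{c:nwasmall}\coloneq c\min(1,\Cr{c:nwalb}/2)$. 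Merely shrinking the stated constant and citing the lemma as a black box would not help, because that rescales both exponents equally.
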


\begin{proof}
  For a suitable choice of $\Cr{c:bstilde}$ much larger than
  $\Cr{c:nwalb}^2$, we have
  \begin{align*}
    \{ D_{\Cr{c:nwalb}\sqrt{n}}(0, \eta_n) > \Cr{c:bstilde} n \}
    \subset \bigcap_{x \in [-nR, nR]}
    \{ D_{\Cr{c:nwalb}\sqrt{n}}(x, \eta_n) > \Cr{c:nwalb}^2 n \}.
  \end{align*}
  Furthermore,
  $\bar{\P}_\zeta(\cup_{x \in [-nR,nR]} \{ \eta_n(x) \neq 0 \}) = 1$. Thus
  \begin{align*}
    \bar{\P}_{\zeta}\big( D_{\Cr{c:nwalb}\sqrt{n}}(0, \eta_n)
      > \Cr{c:bstilde} n \big)
    &\leq \bar{\P}_{\zeta}\Big( \bigcup_{x \in [-nR,nR]} \{ \eta_n(x) \neq 0,
        D_{\Cr{c:nwalb}\sqrt{n}}(x, \eta_n) > \Cr{c:nwalb}^2 n \}\Big)
    \\& \leq \frac{1}{\P_{\zeta}(\tau_\ext=\infty)} \sum_{x=-nR}^{nR}
    \P_\zeta\big( \eta_n(x) \neq 0, D_{\Cr{c:nwalb}\sqrt{n}}(x, \eta_n) >
      \Cr{c:nwalb}^2 n \big)
    \\& \leq \Cr{c:bsbig} n R e^{-\Cr{c:nwasmall} \sqrt{n}}
  \end{align*}
  by Lemma~\ref{lem:neverwalkalone} and Remark \ref{rem:survival}, if
  $\Cr{c:bsbig}$ is chosen large enough.
\end{proof}

\section{Proof of Theorem~\ref{thm:perturbed_subadditivity}}
\label{sec:mainproof}

In order to show Theorem~\ref{thm:perturbed_subadditivity} we have to
compare the evolution of $\eta$ conditioned on non-extinction (started from a
  configuration $\eta_0$ with a single particle at the origin) after the time
${\tau}_\hit(x)$, with another copy of $\eta$ started from a configuration
that contains a single particle at $x$, conditioned again to survive.

We split the proof into several steps. First, we compare two copies, $\eta^1$
and $\eta^2$, of the process started from configurations with certain
properties and \emph{not} conditioned to survive, and prove that their fronts
can be coupled with high probability so that they stay ordered. This is the
content of Proposition~\ref{prop:couplefronts} in
Section~\ref{sec:couplefronts}, which is one of the key ideas of the paper. It
strongly relies on Assumptions~\ref{ass:coarse} and~\ref{ass:domination}, which
imply that if two configurations agree locally at some time, then they can be
kept locally identical for a long (possibly infinite) time. Another key
ingredient is Assumption~\ref{ass:coupling}, which implies that, eventually,
any two configurations become identical near their right edges.

We then show in Section~\ref{sec:delaybounds} that with high probability the
evolution of the original process from time $\tau_\hit(x)$ and the new
process started with one particle at $x$ reach these configurations
relatively quickly. In
Section~\ref{sec:shiftandsurvive} we modify the coupling from
Proposition~\ref{prop:couplefronts} to hold when both
processes are conditioned to survive. We will then prove
Theorem~\ref{thm:perturbed_subadditivity} in Section~\ref{sec:proof}.

Before we start with this programme, we recall from Assumption~\ref{ass:flow}
that the process $\eta$ is constructed on a probability space containing a
family $U = \{U(x,n): {x\in \Z}, {n\in \N_0}\}$ of i.i.d.~uniformly
distributed random variables by \eqref{eqn:flow1} and consequently also by
\eqref{eqn:eta_U_flow}. When constructing two copies $\eta^1$ and $\eta^2$ of
the process, we view them as being constructed on a probability space
containing two copies $U^1$, $U^2$ of $U$. Hence, constructing couplings of
two processes $\eta^1$ and $\eta^2$ actually means constructing couplings of
$U^1$ and $U^2$. Moreover, by construction, constructing a coupling of
$\eta^1$ and $\eta^2$ on certain time intervals $[t^1,t^1+t]$ and
$[t^2,t^2+t]$, respectively, means that we only couple the slices
$\{U^1(x,n):x\in \Z, t^1 < n \le t^1 +t \}$ and
$\{U^2(x,n):x\in \Z, t^2 < n \le t^2 +t \}$. This should be kept in mind
during the whole section and we will mostly not comment on it explicitly.

For typographical reasons, throughout this section we use the following
notation for translations of sets and configurations: For $z\in \Z$,
$A \subset \Z$ and  $\zeta \in E^\Z$, we set
\begin{equation}
  \label{eqn:shifts}
  A+z \coloneq \theta_z(A) = \{ a+z : a \in A \},
  \qquad
  \zeta + z  \coloneq \theta_z(\zeta) =  ( \zeta(x-z) : x \in \Z ).
\end{equation}

\subsection{Coupling the fronts}
\label{sec:couplefronts}

In this section we consider two copies $\eta^1$ and $\eta^2$ of our process,
whose starting configurations have certain suitable properties. We show that
with high probability, their fronts remain ordered forever. To state the
result we define
\begin{equation}
  \label{eqn:Maxdef}
  M_n^i \coloneq \max \{ x \in \Z : \eta_n^i(x) \neq 0 \},
  \quad n \ge 0, \ i =1,2,
\end{equation}
to be the position of the right-most particle in configuration $\eta^i$ at
time $n$. (We use the convention $M_n^i= - \infty$ if $\eta_n^i\equiv 0$.)

\begin{proposition}
  \label{prop:couplefronts}
  There are constants $c,C \in (0,\infty)$ such that for every
  $\ell,\rho \in \N$ with
  $(\ell - \rho L_\bl)/(2R ) \ge T_\bl+ (2\rho +1) T_\bl/\tilde v$
  and every
  $\eta_0^1, \eta_0^2$ satisfying
  \begin{align*}
    &\abs{ W(\eta^1_0 )\cap B^\bl_\rho(0) }\ge p_\nu V_\rho /2,
    \\&\abs{W(\eta_0^2)} \ge p_\nu V_\rho/2
    \qquad\text{and}\qquad\eta_0^2(x) = 0 \text{ for all }x > -\ell,
  \end{align*}
  there is a coupling $ P_{\eta_0^1,\eta_0^2}$ of processes $\eta^1$, $\eta^2$
  started from $\eta_0^1$, $\eta_0^2$, respectively, such that
  \begin{equation*}
    P_{\eta_0^1,\eta_0^2}\big(M^1_k \ge M^2_k \text{ for all } k\ge 0\big)
    \ge 1 - 2 C e^{-c \rho}
    - C \sqrt{\ell-\rho L_\bl}\,e^{ -c \sqrt{\ell -\rho L_\bl}}.
  \end{equation*}
\end{proposition}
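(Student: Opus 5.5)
The plan is to couple the two processes through the \emph{same} driving noise, $U^1=U^2=U$, and to compare both with the \emph{same} Bernoulli field $\tilde Y$ from Assumption~\ref{ass:domination}. I expect this identical coupling to suffice, with no shift needed at this stage. The mechanism is as follows. Whenever a block $\Block(z,t)$ is good and both $\eta^1_t,\eta^2_t\in G^\eta_\loc(z)$, Assumption~\ref{ass:coarse}(c) gives $\eta^1_{t+T_\bl}=\eta^2_{t+T_\bl}$ on $B_{2L_\bl}(z')$ for $z'\in\{z-L_\bl,z,z+L_\bl\}$. By Assumption~\ref{ass:coarse}(b) these are again well-started blocks for both processes. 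Assumption~\ref{ass:coarse}(d) then keeps the two configurations equal inside the blocks along open paths, at all intermediate times. Consequently, on the part of space-time that lies in the percolation clusters of both $W(\eta^1_0)$ and $W(\eta^2_0)$, the configurations agree after one block step. To the right of the rightmost block cluster, the structure of the infinite cluster (coupling with $\tilde Z^\nu$, Lemma~\ref{lem:percolation}\eqref{perc:couple_late}) should force the fronts into the agreement region.

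First I control $\eta^1$. Set $A^1=W(\eta^1_0)\cap B^\bl_\rho(0)$, so $\abs{A^1}\ge p_\nu V_\rho/2$. Lemma~\ref{lem:shift_blocks} and Lemma~\ref{lem:spread_goodconf} show that, outside an event of probability $Ce^{-c\rho}$, the following holds from time $t_*=T_\bl+(2\rho+1)T_\bl/\tilde v$ onwards: $\tilde Z^{A^1}$ survives and coincides with $\tilde Z^\nu$ on a space interval around the origin that grows linearly at speed $\tilde v L_\bl/T_\bl$. By \eqref{eqn:fromZtoeta}, every site of this interval where $\tilde Z^\nu=1$ is a well-started block of $\eta^1$. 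The same argument applied to $A^2=W(\eta^2_0)$ costs another $Ce^{-c\rho}$, which explains the factor $2$ in the statement. It gives that $\tilde Z^{A^2}$ survives and agrees with $\tilde Z^\nu$ on its own growing coupled region.

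The hypothesis $(\ell-\rho L_\bl)/(2R)\ge t_*$ guarantees that up to time $t_*$ the front $M^2$, which moves at speed at most $R$ by Assumption~\ref{ass:flow}, stays at distance at least $(\ell-\rho L_\bl)/2$ to the left of $B^\bl_\rho(0)$. In particular, $M^1_k\ge M^2_k$ holds trivially for $k\le t_*$, and by time $t_*$ the coupled region of $A^1$ is already in place. After $t_*$, the coupled regions of $A^1$ and $A^2$ overlap, because both coincide with $\tilde Z^\nu$ where they are defined. On the overlap both processes are well-started at the same blocks, so by the mechanism above $\eta^1=\eta^2$ there from one block step on.

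The remaining danger is that $\eta^2$ pushes particles into a region to the right where $\eta^1$ has not yet been forced to agree with it and has no particle further right. I would exclude this through the region between the two fronts. Any particle of $\eta^2$ at $x$ at a time $s$ with $x$ in the coupled region of $A^1$ but not yet in the agreement zone must, by Lemma~\ref{lem:incoupled}, be accompanied by particles of $\eta^1$ at $x$ shortly afterwards. More precisely, Lemma~\ref{lem:incoupled} gives that $\eta^1$ cannot stay empty at a coupled site for a time of order $\sqrt{\ell-\rho L_\bl}$, except with probability $e^{-c\sqrt{\ell-\rho L_\bl}}$. Taking a union bound over the $O(\ell-\rho L_\bl)$ relevant sites and time windows, measured on the scale set by the initial gap, produces the term $C\sqrt{\ell-\rho L_\bl}\,e^{-c\sqrt{\ell-\rho L_\bl}}$. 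Once agreement holds on a window of width $2L_\bl$ ahead of $M^2$, the equality propagates with the front, and $M^1_k\ge M^2_k$ follows for all $k$.

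The main obstacle is precisely this last step: turning \emph{agreement inside blocks} into an \emph{ordering of the extreme right particles} without monotonicity. The difficulty is that particles of $\eta^2$ outside well-started blocks are not controlled by the percolation at all. What I would try first is to show that, once the agreement zone of the two processes reaches the right boundary of $\eta^2$'s percolation cluster, $\eta^2$ has no particles beyond a distance $O(1)$ of that boundary that $\eta^1$ lacks. If this fails, the fallback is to prove the weaker statement that $\eta^1$ is eventually \emph{equal} to $\eta^2$ to the right of a point behind both fronts. That statement already implies the claimed ordering.
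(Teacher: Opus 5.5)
There is a genuine gap, and it sits where you yourself place the main obstacle. Under the identical coupling $U^1=U^2$, Assumption~\ref{ass:coarse}(c),(d) gives agreement of $\eta^1$ and $\eta^2$ only on blocks of a cluster along which \emph{both} processes are well started. The right-most particles are in general not in that agreement zone. They lie to the right of the cluster, where Assumptions~\ref{ass:flow}--\ref{ass:irreducibility} say nothing. There $\eta^i$ is determined by three things: the common values on the cluster, the common noise, and the \emph{different} past of $\eta^i$ in that region, which already differs at time $0$. Since the two fronts sit at different positions, they read different noise variables and evolve essentially independently. Without a locking mechanism, their difference behaves heuristically like a walk started near $\ell$, so $\{M^1_k\ge M^2_k\ \forall k\ge 0\}$ cannot be expected to have probability close to one over an infinite horizon.

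Your proposed repairs do not close this. Lemma~\ref{lem:incoupled} only says that a fixed site is occupied at \emph{some} time in a window, not that $\eta^1$ has a particle beyond $M^2_k$ at \emph{every} time $k$. A union bound over $O(\ell-\rho L_\bl)$ windows covers only a finite horizon, after which the head start is used up. Agreement on a window of width $2L_\bl$ ahead of $M^2$ also does not `propagate with the front': disagreement enters from the right of the window, and from the left unless a surviving cluster acts as a wall. Two smaller points. $W(\eta^2_0)$ may lie arbitrarily far to the left, so the coupled regions of $A^1,A^2$ need not overlap by time $t_*$. And $M^1_k\ge M^2_k$ for $k\le t_*$ is not trivial: it requires $\eta^1$ not to die or retreat.

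The missing ingredient is Assumption~\ref{ass:coupling}, which your proposal never uses, combined with a \emph{shift} of the noise, which you explicitly declare unnecessary. The paper proceeds as follows.
\begin{enumerate}
  \item It recentres both configurations at their maxima, $\eta^i-\bbceil{M^i}$.
  \item It applies the coupling of Assumption~\ref{ass:coupling} every $t_5$ steps until a time $T_j$ at which $\eta^1_{T_j}\in G^\eta_\loc(Z_j)$ and $\eta^1$, $\eta^2$ agree, after shifting by $\Delta_j=M^1_{T_j}-M^2_{T_j}$, on the whole half-line to the right of $Z_j-KL_\bl$.
  \item From then on it sets $U^2(x,\cdot)=U^1(x+\Delta_j,\cdot)$.
  \item If the percolation cluster from $(Z_j,T_j)$ survives, its blocks (of width at least $2R$) shield everything to their right. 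By locality, $\eta^1=\Delta_j+\eta^2$ there forever, so $M^1_t-M^2_t\equiv\Delta_j$.
  \item If the cluster dies, it retries.
\end{enumerate}
The number of attempts is geometric, and each attempt has exponential tails. Hence $T_J\le n\approx(\ell-\rho L_\bl)/(2R)$ except with probability $C\sqrt{\ell-\rho L_\bl}\,e^{-c\sqrt{\ell-\rho L_\bl}}$, which is where that term comes from. Up to time $n$ the head start keeps $M^1-M^2\ge 0$, so $\Delta_J\ge 0$. Your fallback, eventual equality to the right of a point behind the fronts, is the right target. It is attainable only up to a spatial shift and only through Assumption~\ref{ass:coupling}; nothing in the assumptions makes it hold under the identical, unshifted coupling.
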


\begin{proof}
  We start by explaining the idea of the proof. The goal is to construct a
  coupling so that, eventually, $M^1_n-M^2_n$ remains constant. This will be
  achieved as follows. We first use the coupling from
  Assumption~\ref{ass:coupling} iteratively to obtain a random time $T$ when
  $\eta^1$ and $\eta^2$ agree near their fronts and contain a well-started
  block there. When this happens, we couple $\eta^1$ and $\eta^2$ after $T$
  by setting $U^2$ to be the spatial shift of $U^1$ by $M^1_T-M^2_T$.
  In the case when the underlying oriented
  percolation started from the well-started block in $\eta^1$ survives
  forever, we show that the difference between the two maxima remains
  constant under this coupling, as required. When this oriented percolation
  does not survive, we repeat the construction starting with the coupling in
  Assumption~\ref{ass:coupling} again. We will show that the first
  alternative occurs after a bounded number of repetitions of these steps,
  with high probability. Since, at time 0, the configuration $\eta^1$ has a
  head start of size roughly $\ell$, this will imply that $\eta^2$ does not
  overtake $\eta^1$, which achieves our aim. For an illustration of the whole
  construction, see Figure~\ref{fig:shifted_coupling}.

  \begin{figure}[ht]
       \begin{tikzpicture}[scale=1.2]

    % ---------- Axes ----------
    \draw[->] (-0.2,0) -- (10,0) node[right] {\small{$\mathbb{Z}$}};
    \draw[->] (0,-0.2) -- (0,5);

    % ---------- Points x and x+y ----------
    \coordinate (x) at (1,0);
    \coordinate (xy) at (8,0);
    \draw[fill] (x) circle (0.5pt) node[below] {$x$};
    \draw[fill] (xy) circle (0.5pt) node[below] {$x+y$};

    \node[left] at (-0.2,0) {\small{$\tau(x)$}};

    % ----- Horizontal dashed line tau(x+y)
    \draw[dashed] (-0.2,4.2) -- (9,4.2);
    \node[left] at (-0.2,4.2) {$\tau(x+y)$};

    % -----  Horizontal lines of coupling times
    \draw[dashed] (-0.2,1.1) -- (3.2,1.1);
    \draw[dashed] (-0.2,1.5) -- (4,1.5);
    \draw[dashed] (-0.2,2) -- (4.8,2);
    \draw[dashed] (-0.2,2.2) -- (5,2.2);
    \draw[dashed] (-0.2,2.9) -- (6.3,2.9);

    % ----- Vertical dashed lines at x and x+y
    \draw[dashed] (1,0) -- (1,4.5);
    \draw[dashed] (8,0) -- (8,4.5);

    % ----- vertical lines of coupling time
    \draw[<->] (6.3,0) -- (6.3,2.9);
    \node[right] at (6.3,1.45) {\tiny{$\ll \| x+y \|^{1/2}$}};

    % ---------- REGION 1:
    \foreach \j in {0,...,5} {
      \pgfmathsetmacro{\y}{\j*0.1}
      \pgfmathsetmacro{\xmax}{1+1.7*\y}
      \pgfmathsetmacro{\N}{int(\xmax/0.1)}
      \foreach \i in {1,...,\N} {
        \pgfmathsetmacro{\x}{\i*0.1}
        \fill[black] (\x,\y) circle (0.3pt);
      }
    }

    % ---------- REGION 2:
    \foreach \j in {5,...,10} {
      \pgfmathsetmacro{\y}{\j*0.1}
      \pgfmathsetmacro{\xleftblack}{max(0,1-1.7*(\y-0.5))}
      \pgfmathsetmacro{\xblueleft}{\xleftblack}
      \pgfmathsetmacro{\xblueright}{1+1.7*(\y-0.5)}
      \pgfmathsetmacro{\xrightblack}{1+1.7*\y}

      % Left black
      \pgfmathsetmacro{\N}{int(\xleftblack/0.1)}
      \foreach \i in {1,...,\N} {
        \pgfmathsetmacro{\x}{\i*0.1}
        \fill[black] (\x,\y) circle (0.3pt);
      }
      % Blue
      \pgfmathsetmacro{\N}{int((\xblueright-\xblueleft)/0.1)}
      \foreach \i in {0,...,\N} {
        \pgfmathsetmacro{\x}{\xblueleft + \i*0.1}
        \fill[blue] (\x,\y) circle (0.5pt);
      }
      % Right black
      \pgfmathsetmacro{\N}{int((\xrightblack-\xblueright)/0.1)}
      \foreach \i in {0,...,\N} {
        \pgfmathsetmacro{\x}{\xblueright + \i*0.1}
        \fill[black] (\x,\y) circle (0.3pt);
      }
    }

    % ---------- REGION 3:
    \foreach \j in {11,...,15} {
      \pgfmathsetmacro{\y}{\j*0.1}

      % Boundaries
      \pgfmathsetmacro{\xblackleft}{max(0, 1 - 1.7*(\y-0.5))}
      \pgfmathsetmacro{\xblueleft}{\xblackleft}
      \pgfmathsetmacro{\xblueright}{1.9-0.25*(\y-0.5)}
      \pgfmathsetmacro{\xredright}{1 + 1.7*(\y-0.5)}
      \pgfmathsetmacro{\xblackright}{2.9-0.25*\y}
      \pgfmathsetmacro{\xredmax}{1+1.7*\y}

      % Left black
      \pgfmathsetmacro{\N}{int(\xblackleft/0.1)}
      \foreach \i in {1,...,\N} {
        \pgfmathsetmacro{\x}{\i*0.1}
        \fill[black] (\x,\y) circle (0.3pt);
      }

      % Blue
      \pgfmathsetmacro{\N}{int((\xblueright-\xblueleft)/0.1)}
      \foreach \i in {1,...,\N} {
        \pgfmathsetmacro{\x}{\xblueleft + \i*0.1}
        \fill[blue] (\x,\y) circle (0.5pt);
      }

      % Red
      \pgfmathsetmacro{\N}{int((\xredright-\xblueright)/0.1)}
      \foreach \i in {0,...,\N} {
        \pgfmathsetmacro{\x}{\xblueright + \i*0.1}
        \fill[red] (\x,\y) circle (0.7pt);
      }

      % Right black
      \pgfmathsetmacro{\N}{int((\xblackright-\xredright)/0.1)}
      \foreach \i in {1,...,\N} {
        \pgfmathsetmacro{\x}{\xredright + \i*0.1}
        \fill[black] (\x,\y) circle (0.3pt);
      }

      % Right red
      \pgfmathsetmacro{\N}{int((\xredmax-\xblackright)/0.1)}
      \foreach \i in {0,...,\N} {
        \pgfmathsetmacro{\x}{\xblackright + \i*0.1}
        \fill[red] (\x,\y) circle (0.7pt);
      }
    }

    % ---------- REGION 4:
    \foreach \j in {16,...,19} {
      \pgfmathsetmacro{\y}{\j*0.1}
      \pgfmathsetmacro{\xblackleft}{max(0,1 - 1.7*(\y-0.5))}
      \pgfmathsetmacro{\xblueleft}{\xblackleft}
      \pgfmathsetmacro{\xblueright}{1 + 1.7*(\y-0.5)}
      \pgfmathsetmacro{\xblackright}{1+1.7*\y}

      % Left black
      \pgfmathsetmacro{\N}{int(\xblackleft/0.1)}
      \foreach \i in {0,...,\N} {
        \pgfmathsetmacro{\x}{\i*0.1}
        \fill[black] (\x,\y) circle (0.3pt);
      }
      % Blue
      \pgfmathsetmacro{\N}{int((\xblueright-\xblueleft)/0.1)}
      \foreach \i in {0,...,\N} {
        \pgfmathsetmacro{\x}{\xblueleft + \i*0.1}
        \fill[blue] (\x,\y) circle (0.5pt);
      }
      % Right black
      \pgfmathsetmacro{\N}{int((\xblackright-\xblueright)/0.1)}
      \foreach \i in {0,...,\N} {
        \pgfmathsetmacro{\x}{\xblueright + \i*0.1}
        \fill[black] (\x,\y) circle (0.3pt);
      }
    }

    % ---------- REGION 5:
    \foreach \j in {20,...,22} {
      \pgfmathsetmacro{\y}{\j*0.1}

      % Boundaries
      \pgfmathsetmacro{\xblackleft}{max(0, 1 - 1.7*(\y-0.5))}
      \pgfmathsetmacro{\xblueleft}{\xblackleft}
      \pgfmathsetmacro{\xblueright}{3.6- 0.2*(\y-0.5)}
      \pgfmathsetmacro{\xredright}{1 + 1.7*(\y-0.5)}
      \pgfmathsetmacro{\xblackright}{4.6-0.2*\y}
      \pgfmathsetmacro{\xredmax}{1+1.7*\y}

      % Left black
      \pgfmathsetmacro{\N}{int(\xblackleft/0.1)}
      \foreach \i in {1,...,\N} {
        \pgfmathsetmacro{\x}{\i*0.1}
        \fill[black] (\x,\y) circle (0.3pt);
      }

      % Blue
      \pgfmathsetmacro{\N}{int((\xblueright-\xblueleft)/0.1)}
      \foreach \i in {0,...,\N} {
        \pgfmathsetmacro{\x}{\xblueleft + \i*0.1}
        \fill[blue] (\x,\y) circle (0.5pt);
      }

      % Red
      \pgfmathsetmacro{\N}{int((\xredright-\xblueright)/0.1)}
      \foreach \i in {0,...,\N} {
        \pgfmathsetmacro{\x}{\xblueright + \i*0.1}
        \fill[red] (\x,\y) circle (0.7pt);
      }

      % Right black
      \pgfmathsetmacro{\N}{int((\xblackright-\xredright)/0.1)}
      \foreach \i in {0,...,\N} {
        \pgfmathsetmacro{\x}{\xredright + \i*0.1}
        \fill[black] (\x,\y) circle (0.3pt);
      }

      % Right red
      \pgfmathsetmacro{\N}{int((\xredmax-\xblackright)/0.1)}
      \foreach \i in {0,...,\N} {
        \pgfmathsetmacro{\x}{\xblackright + \i*0.1}
        \fill[red] (\x,\y) circle (0.7pt);
      }
    }

    % ---------- REGION 6:
    \foreach \j in {23,...,28} {
      \pgfmathsetmacro{\y}{\j*0.1}

      % Boundaries
      \pgfmathsetmacro{\xblackleft}{max(0, 1 - 1.7*(\y-0.5))}
      \pgfmathsetmacro{\xblueleft}{\xblackleft}
      \pgfmathsetmacro{\xblueright}{1 + 1.7*(\y-0.5)}
      \pgfmathsetmacro{\xblackright}{1+1.7*\y}

      % Left black
      \pgfmathsetmacro{\N}{int(\xblackleft/0.1)}
      \foreach \i in {0,...,\N} {
        \pgfmathsetmacro{\x}{\i*0.1}
        \fill[black] (\x,\y) circle (0.3pt);
      }

      % Blue
      \pgfmathsetmacro{\N}{int((\xblueright-\xblueleft)/0.1)}
      \foreach \i in {0,...,\N} {
        \pgfmathsetmacro{\x}{\xblueleft + \i*0.1}
        \fill[blue] (\x,\y) circle (0.5pt);
      }

      % Right black
      \pgfmathsetmacro{\N}{int((\xblackright-\xblueright)/0.1)}
      \foreach \i in {0,...,\N} {
        \pgfmathsetmacro{\x}{\xblueright + \i*0.1}
        \fill[black] (\x,\y) circle (0.3pt);
      }
    }

    % ---------- REGION 7:
    \foreach \j in {29,...,31} {
      \pgfmathsetmacro{\y}{\j*0.1}

      % Boundaries
      \pgfmathsetmacro{\xblackleft}{max(0, 1 - 1.7*(\y-0.5))}
      \pgfmathsetmacro{\xblueleft}{\xblackleft}
      \pgfmathsetmacro{\xblueright}{5.3 -0.2*(\y-0.5)}
      \pgfmathsetmacro{\xredright}{1+ 1.7*(\y-0.5)}
      \pgfmathsetmacro{\xblackright}{6.3-0.2*\y}
      \pgfmathsetmacro{\xredmax}{1+1.7*\y}

      % Left black
      \pgfmathsetmacro{\N}{int(\xblackleft/0.1)}
      \foreach \i in {1,...,\N} {
        \pgfmathsetmacro{\x}{\i*0.1}
        \fill[black] (\x,\y) circle (0.3pt);
      }

      % Blue
      \pgfmathsetmacro{\N}{int((\xblueright-\xblueleft)/0.1)}
      \foreach \i in {1,...,\N} {
        \pgfmathsetmacro{\x}{\xblueleft + \i*0.1}
        \fill[blue] (\x,\y) circle (0.5pt);
      }

      % Red
      \pgfmathsetmacro{\N}{int((\xredright-\xblueright)/0.1)}
      \foreach \i in {0,...,\N} {
        \pgfmathsetmacro{\x}{\xblueright + \i*0.1}
        \fill[red] (\x,\y) circle (0.7pt);
      }

      % Right black
      \pgfmathsetmacro{\N}{int((\xblackright-\xredright)/0.1)}
      \foreach \i in {1,...,\N} {
        \pgfmathsetmacro{\x}{\xredright + \i*0.1}
        \fill[black] (\x,\y) circle (0.3pt);
      }

      % Right red
      \pgfmathsetmacro{\N}{int((\xredmax-\xblackright)/0.1)}
      \foreach \i in {0,...,\N} {
        \pgfmathsetmacro{\x}{\xblackright + \i*0.1}
        \fill[red] (\x,\y) circle (0.7pt);
      }
    }

    % ---------- REGION 8:
    \foreach \j in {32,...,42} {
      \pgfmathsetmacro{\y}{\j*0.1}

      % Boundaries
      \pgfmathsetmacro{\xblackleft}{max(0, 1 - 1.7*(\y-0.5))}
      \pgfmathsetmacro{\xblueleft}{\xblackleft}
      \pgfmathsetmacro{\xblueright}{5.3 -0.2*(\y-0.5)}
      \pgfmathsetmacro{\xredright}{1+ 1.7*(\y-0.5)}
      \pgfmathsetmacro{\xblackright}{6.3-0.2*\y}
      \pgfmathsetmacro{\xredmax}{1+1.7*\y}

      % Left black
      \pgfmathsetmacro{\N}{int(\xblackleft/0.1)}
      \foreach \i in {1,...,\N} {
        \pgfmathsetmacro{\x}{\i*0.1}
        \fill[black] (\x,\y) circle (0.3pt);
      }

      % Blue
      \pgfmathsetmacro{\N}{int((\xblueright-\xblueleft)/0.1)}
      \foreach \i in {1,...,\N} {
        \pgfmathsetmacro{\x}{\xblueleft + \i*0.1}
        \fill[blue] (\x,\y) circle (0.5pt);
      }

      % Red
      \pgfmathsetmacro{\N}{int((\xredright-\xblueright)/0.1)}
      \foreach \i in {0,...,\N} {
        \pgfmathsetmacro{\x}{\xblueright + \i*0.1}
        \fill[red] (\x,\y) circle (0.7pt);
      }

      % Right red
      \pgfmathsetmacro{\N}{int((\xredmax-\xblackright)/0.1)}
      \foreach \i in {0,...,\N} {
        \pgfmathsetmacro{\x}{\xblackright + \i*0.1}
        \fill[red] (\x,\y) circle (0.7pt);
      }
    }

  \end{tikzpicture}

      \caption{Shifted coupling construction: the original process (in black)
        started from $\eta_0$ at time 0 is compared after time $\tau(x)$ with
        another process (in blue) started from $\eta_x$ at some time after
        $\tau(x)$ so that the original process has enough `lead'. At each
        step, the fronts agree with (tiny but) positive probability,
        indicated in red. When this occurs, from this time we couple the
        evolutions of the processes. If the coupling fails, we wait until the
        fronts agree again. With high probability, from some time of order
        $\ll \abs{x+y}^{1/2}$ the coupling never fails. Until this time, the
        original process has preserved its lead, and so it hits $x+y$ before
        the blue process.}
      \label{fig:shifted_coupling}
  \end{figure}

  We now proceed with the formal proof. Let $t_5$ be as in
  Assumption~\ref{ass:coupling}. For $t\in  \L_\ttime$ define the set
  \begin{equation*}
    \mathcal Z_t \coloneq \big\{ z \in \L_\sspace : \eta_t^1 \in G_\loc^\eta(z),
      \eta_t^1(x+M_t^1)=\eta_t^2(x+M_t^2) \, \forall x > z- M_t^1- K L_\bl
      \big\}.
  \end{equation*}
  Note that if the event $\{ \mathcal Z_t \neq \emptyset \}$ occurs, then
  $\eta^1_t$ and $\eta^2_t$ agree near their fronts and $\eta^1_t$ contains a
  well-started block. (As a consequence, $\eta^2_t$ contains a well-started
    block as well, but this block might not be aligned with the lattice
    $\L_\sspace$.)

  We now construct the required coupling of the processes $\eta^1$ and
  $\eta^2$. To this end, we also construct two sequences of stopping times
  $(T_j)$, $(S_j)$ given by $T_0=S_0=0$, and by iterating the following two
  steps for $j\ge 1$:

  \smallskip

  \textbf{Step 1.}
  If $S_{j-1}<\infty$, we iteratively construct a coupling of the two
  processes in the time intervals $(S_{j-1}+k t_5, S_{j-1}+(k+1)t_5 ]$,
  $k\in \N_0$, using Assumption~\ref{ass:coupling}, until
  $\{\mathcal Z_t\neq \emptyset\}$ occurs. To this end, for each such $k$, if
  both $\eta_{S_{j-1}+k t_5}^i$, $i=1,2$, are non-empty, we define the
  starting configurations (recall the notation \eqref{eqn:shifts})
  \begin{equation*}
    \hat \eta_0^1
    \coloneq \eta_{S_{j-1}+k t_5}^1 -\bbceil{M_{S_{j-1}+k t_5}^1}
    \qquad \text{and} \qquad
    \hat \eta_0^2
    \coloneq \eta_{S_{j-1}+k t_5}^2 - \bbceil{M_{S_{j-1}+k t_5}^2},
  \end{equation*}
  (note that those implicitly depend on $j$ and $k$ and satisfy the
    hypothesis of Assumption~\ref{ass:coupling} on the initial conditions).
  We then let  $\hat \eta^1$ and $\hat \eta^2$ evolve according to the
  coupling $\P_{\hat \eta_0^1,\hat \eta_0^2}$ from
  Assumption~\ref{ass:coupling} for $t_5$ steps, and for $s \in [1,t_5]$, we set
  \begin{equation*}
    \eta_{S_{j-1}+ k t_5 +s}^1
    = \hat \eta_s^1 +\bbceil{M_{S_{j-1}+k t_5}^1}
    \quad \text{and} \quad
    \eta_{S_{j-1}+ k t_5 +s}^2
    = \hat \eta_{s}^2 +\bbceil{M_{S_{j-1}+k t_5}^2}.
  \end{equation*}
  If one of the configurations $\eta_{S_{j-1}+k t_5}^i$, $i=1,2$, is empty,
  we let $\eta^1$ and $\eta^2$ run independently after $S_{j-1}+k t_5$, and
  set $T_j=\infty$. Otherwise, we define
  \begin{equation*}
    T_j  \coloneq S_{j-1}
    + t_5 \cdot \inf\{ k\in\N : \mathcal{Z}_{S_{j-1}+k t_5} \neq\emptyset \}.
  \end{equation*}
  If $S_{j-1}=\infty$, we set $T_j=\infty$.

  \smallskip

  \textbf{Step 2.}
  If $T_j < \infty$,  let $Z_j = \max \mathcal Z_{T_j}$ (which is well
    defined since $\mathcal Z_{T_j}$ is non-empty). Similarly as in
  Assumption~\ref{ass:domination}, we set
  $Y^1(z,t)$ to be the indicator of the event
  $\{U^1\in G_\loc^U(z,t)\}$, and consider an i.i.d.~Bernoulli
  field $\tilde Y^1$ satisfying $Y^1 \ge \tilde Y^1$, having the same law as
  $\tilde Y$ in this assumption. Similarly as in
  Section~\ref{sec:comparison_perc}, for $(z,s)\in \L$,
  we define the oriented percolation process
  \begin{equation*}
    \tilde Z^{(z,s)}_t(z')
    \coloneq \ind\big\{ \text{there is an open path in $\tilde Y^1$
        from $(z,s)$ to $(z',t)$}
      \big\},
    \qquad (z',t)\in \L, \, t> s,
  \end{equation*}
  and the corresponding extinction time
  \begin{equation*}
    \begin{split}
      \tilde \tau_\ext^{(z,s)} &\coloneq
      \inf \big\{t \in \L_\ttime : t>s, \tilde Z_t^{(z,s)}(\cdot)\equiv 0\big\}.
    \end{split}
  \end{equation*}
  We then set
  \begin{align*}
    S_j & \coloneq \tilde \tau_\ext^{(Z_j, T_j)}.
  \end{align*}
  With $\Delta_j  \coloneq M_{T_j}^{1}-M_{T_j}^{2}$, we then couple $\eta^1$ and
  $\eta^2$ on the interval $(T_j,S_j]$ by setting
  $U^2(x,T_j+n) = U^1(x+\Delta_j, T_j+n)$ for every $x\in \Z$ and
  $1\le n \le S_j-T_j$. That is, for such $n$ we iteratively define
  \begin{equation}
    \label{eqn:shiftedcoupling}
    \begin{split}
      \eta_{T_j+n}^1(x)
      &= F\big(\eta_{T_j+n-1}^1, U^1(x,T_j+n)\big),
      \\\eta_{T_j+n}^2(x)
      &= F\big(\eta_{T_j+n-1}^2, U^1(x+\Delta_j,T_j+n)\big).
    \end{split}
  \end{equation}
  If $T_j=\infty$, we set $S_j=\infty$.
  \smallskip

  \begin{figure}[t]
    \centering
      \begin{tikzpicture}[xscale=0.8,yscale=0.7]
    %\useasboundingbox (-8.5, -0.5) rectangle (11.5, 10.5);
    \newcommand{\agreeline}[1]{%
      \begin{pgfonlayer}{background}
        \begin{scope}
          \clip (-7.5,-0.5) rectangle (7.5,10.5);
          \draw[blue, thick] (#1.center) -- +(2,0);
          \draw[blue, thick] (#1.center) -- +(-2,0);
        \end{scope}
      \end{pgfonlayer}%
    }
    \newcommand{\agreebox}[1]{%
      \begin{pgfonlayer}{background}
        \begin{scope}
          \clip (-7.5,-0.5) rectangle (7.5,10.5);
          \fill[blue!30] (#1.center) -- +(0.5,0) -- +(0.5,1) -- +(-0.5,1) -- +(-0.5,0);
        \end{scope}
      \end{pgfonlayer}%
    }
    % Styles:
    %  edot    = white circle  (unreachable, white)
    %  gdot    = grey circle   (unreachable, grey)
    %  esquare = white square  (reachable terminal, white)
    %  gsquare = grey square   (reachable, grey — path passes through)
    %  origin  = grey square   (the source (0,0))
    %  coupled = grey rectangle in background

    \tikzset{
      arr/.style     = {-{Stealth[length=2.5pt,width=2pt]}, thin },
      ultrathick/.style = {line width=2pt},
      edot/.style    = {circle,    draw=black, fill=white,   inner sep=0pt, minimum size=6pt},
      gdot/.style    = {circle,    draw=black, fill=gray!70, inner sep=0pt, minimum size=6pt},
      esquare/.style = {
        rectangle, draw=black, fill=white,   inner sep=0pt, minimum size=6pt,
        append after command={
          \pgfextra{
            \agreeline{\tikzlastnode};
          }
        }
      },
      gsquare/.style = {
        rectangle, draw=black, fill=gray!70, inner sep=0pt, minimum size=6pt,
        append after command={
          \pgfextra{
            \draw[arr] (\tikzlastnode.center) -- +(-0.8,0.8);
            \draw[arr] (\tikzlastnode.center) -- +(0,0.8);
            \draw[arr] (\tikzlastnode.center) -- +(0.8,0.8);
            \agreebox{\tikzlastnode};
            \agreeline{\tikzlastnode};
          }
        }
      },
      origin/.style  = {
        rectangle, draw=black, fill=black, inner sep=0pt, minimum size=6pt,
        append after command={
          \pgfextra{
            \draw[arr] (\tikzlastnode.center) -- +(-0.8,0.8);
            \draw[arr] (\tikzlastnode.center) -- +(0.8,0.8);
            \draw[arr] (\tikzlastnode.center) -- +(0,0.8);
            \agreebox{\tikzlastnode};
            \begin{pgfonlayer}{background}
              \draw[red,ultrathick] (\tikzlastnode) -- +(4,0);
              \draw[red,ultrathick] (\tikzlastnode) -- +(-4,0);
            \end{pgfonlayer}
          }
        }
      },
      coupled/.style = {
        append after command={
          \pgfextra{
            \fill[black!10] (\tikzlastnode.center) -- +(0.5,0) -- +(0.5,1) -- +(-0.5,1) -- +(-0.5,0);
          }
        }
      }
    }

    %coupled region to the right and in the holes
    \begin{pgfonlayer}{background}
      \fill[black!10] (0.5,0) -- (7.4,0) -- (7.4,10.0) -- (4.5,10.0) -- (4.5,9)
      -- (3.5,9) -- (3.5,8) -- (2.5,8) -- (2.5,7) -- (1.5,7) -- (1.5,6)
      -- (2.5,6) -- (2.5,3) -- (1.5,3) -- (1.5,2) -- (0.5,2);

      \foreach \x in {(0,3), (1,4), (-1,5), (0,6), (-2,7), (1,7),
        (-3,8), (2,8), (-4,9), (-2,9), (3,9)}
      {
        \node[coupled] at \x {};
      }
      \draw[red, thick] (4,0)--(7.4,0);
    \end{pgfonlayer}

    % y=0
    \foreach \x/\f in {-7/1,-6/0,-5/1,-4/1,-3/0,-2/1,-1/0,0/4,1/1,2/1,3/0,4/1,5/1,6/0,7/1}
    { \ifnum\f=0 \node[edot] at (\x,0) {};
      \else\ifnum\f=1 \node[gdot] at (\x,0) {};
      \else\ifnum\f=2 \node[gsquare] at (\x,0) {};
      \else\ifnum\f=3 \node[esquare] at (\x,0) {};
      \else \node[origin] at (\x,0) {};
      \fi\fi\fi\fi }

    % y=1
    \foreach \x/\f in {-7/0,-6/1,-5/1,-4/0,-3/1,-2/0,-1/2,0/2,1/3,2/1,3/1,4/0,5/1,6/1,7/0}
    { \ifnum\f=0 \node[edot] at (\x,1) {};
      \else\ifnum\f=1 \node[gdot] at (\x,1) {};
      \else\ifnum\f=2 \node[gsquare] at (\x,1) {};
      \else\ifnum\f=3 \node[esquare] at (\x,1) {};
      \else \node[origin] at (\x,1) {};
      \fi\fi\fi\fi }

    % y=2
    \foreach \x/\f in {-7/1,-6/1,-5/0,-4/1,-3/1,-2/3,-1/3,0/2,1/2,2/0,3/1,4/1,5/1,6/0,7/1}
    { \ifnum\f=0 \node[edot] at (\x,2) {};
      \else\ifnum\f=1 \node[gdot] at (\x,2) {};
      \else\ifnum\f=2 \node[gsquare] at (\x,2) {};
      \else\ifnum\f=3 \node[esquare] at (\x,2) {};
      \else \node[origin] at (\x,2) {};
      \fi\fi\fi\fi }

    % y=3
    \foreach \x/\f in {-7/0,-6/1,-5/1,-4/1,-3/0,-2/1,-1/2,0/3,1/2,2/2,3/1,4/0,5/0,6/1,7/1}
    { \ifnum\f=0 \node[edot] at (\x,3) {};
      \else\ifnum\f=1 \node[gdot] at (\x,3) {};
      \else\ifnum\f=2 \node[gsquare] at (\x,3) {};
      \else\ifnum\f=3 \node[esquare] at (\x,3) {};
      \else \node[origin] at (\x,3) {};
      \fi\fi\fi\fi }

    % y=4
    \foreach \x/\f in {-7/1,-6/0,-5/1,-4/0,-3/1,-2/2,-1/2,0/2,1/3,2/2,3/3,4/1,5/1,6/1,7/0}
    { \ifnum\f=0 \node[edot] at (\x,4) {};
      \else\ifnum\f=1 \node[gdot] at (\x,4) {};
      \else\ifnum\f=2 \node[gsquare] at (\x,4) {};
      \else\ifnum\f=3 \node[esquare] at (\x,4) {};
      \else \node[origin] at (\x,4) {};
      \fi\fi\fi\fi }

    % y=5
    \foreach \x/\f in {-7/1,-6/1,-5/0,-4/1,-3/3,-2/2,-1/3,0/2,1/2,2/2,3/3,4/1,5/0,6/1,7/1}
    { \ifnum\f=0 \node[edot] at (\x,5) {};
      \else\ifnum\f=1 \node[gdot] at (\x,5) {};
      \else\ifnum\f=2 \node[gsquare] at (\x,5) {};
      \else\ifnum\f=3 \node[esquare] at (\x,5) {};
      \else \node[origin] at (\x,5) {};
      \fi\fi\fi\fi }

    % y=6
    \foreach \x/\f in {-7/0,-6/1,-5/1,-4/0,-3/2,-2/2,-1/2,0/3,1/2,2/3,3/3,4/1,5/1,6/0,7/1}
    { \ifnum\f=0 \node[edot] at (\x,6) {};
      \else\ifnum\f=1 \node[gdot] at (\x,6) {};
      \else\ifnum\f=2 \node[gsquare] at (\x,6) {};
      \else\ifnum\f=3 \node[esquare] at (\x,6) {};
      \else \node[origin] at (\x,6) {};
      \fi\fi\fi\fi }

    % y=7
    \foreach \x/\f in {-7/1,-6/0,-5/1,-4/2,-3/2,-2/3,-1/2,0/2,1/3,2/2,3/1,4/0,5/1,6/1,7/0}
    { \ifnum\f=0 \node[edot] at (\x,7) {};
      \else\ifnum\f=1 \node[gdot] at (\x,7) {};
      \else\ifnum\f=2 \node[gsquare] at (\x,7) {};
      \else\ifnum\f=3 \node[esquare] at (\x,7) {};
      \else \node[origin] at (\x,7) {};
      \fi\fi\fi\fi }

    % y=8
    \foreach \x/\f in {-7/1,-6/1,-5/3,-4/2,-3/3,-2/2,-1/2,0/2,1/2,2/3,3/2,4/0,5/1,6/1,7/1}
    { \ifnum\f=0 \node[edot] at (\x,8) {};
      \else\ifnum\f=1 \node[gdot] at (\x,8) {};
      \else\ifnum\f=2 \node[gsquare] at (\x,8) {};
      \else\ifnum\f=3 \node[esquare] at (\x,8) {};
      \else \node[origin] at (\x,8) {};
      \fi\fi\fi\fi }

    % y=9
    \foreach \x/\f in {-7/0,-6/1,-5/2,-4/3,-3/2,-2/3,-1/2,0/2,1/2,2/2,3/3,4/2,5/1,6/0,7/1}
    { \ifnum\f=0 \node[edot] at (\x,9) {};
      \else\ifnum\f=1 \node[gdot] at (\x,9) {};
      \else\ifnum\f=2 \node[gsquare] at (\x,9) {};
      \else\ifnum\f=3 \node[esquare] at (\x,9) {};
      \else \node[origin] at (\x,9) {};
      \fi\fi\fi\fi }

    % Redefine square styles without arrows for the top row
    \tikzset{
      gsquare/.style = {
        rectangle, draw=black, fill=gray!70, inner sep=0pt, minimum size=6pt,
        append after command={
          \pgfextra{
            \agreeline{\tikzlastnode};
          }
        }
      }
    }

    % y=10
    \foreach \x/\f in {-7/1,-6/3,-5/2,-4/2,-3/3,-2/2,-1/3,0/2,1/2,2/2,3/2,4/3,5/2,6/0,7/1}
    { \ifnum\f=0 \node[edot] at (\x,10) {};
      \else\ifnum\f=1 \node[gdot] at (\x,10) {};
      \else\ifnum\f=2 \node[gsquare] at (\x,10) {};
      \else\ifnum\f=3 \node[esquare] at (\x,10) {};
      \else \node[origin] at (\x,10) {};
      \fi\fi\fi\fi }

    \draw[->] (7.8, -0.2) -- (7.8, 10);
    \node[rotate=-90, anchor=south] at (7.8, 9) {time};
    \draw (7.7,0) -- (7.9,0) node[right] {$T_J$};
    \draw (7.7,1) -- (7.9,1) node[right] {$T_J+T_\bl$};
    \draw (7.7,2) -- (7.9,2) node[right] {$T_J+2T_\bl$};
    \foreach \x in {3, ..., 9} {\draw (7.7,\x) -- (7.9,\x);}

    \node[below] at (0,-.1) {$Z_J$};
    \node[below] at (-4,-.1) {$Z_J-KL_\bl$};
    \node[below] at (4,-.1) {$Z_J+KL_\bl$};
  \end{tikzpicture}

    \caption{Illustration for \eqref{eqn:orderedforever}: The black square is
      located at $(Z_J,T_J)$. Grey (and black) dots and squares are
      positioned on the centre of the bottom face of good blocks, white dots
      and squares correspond to bad blocks. Squares are points for which
      $\tilde Z^{(Z_J,T_J)}=1$, that is they are reachable by an open path
      from the black square. Due to the construction of $T_J$, the
      configurations $\eta^1$ and $\eta^2$ (shifted by their maxima) agree at
      the bottom of $\Block(Z_J,T_J)$ (thick red line) and to the right of it
      (thin red line). By Assumption~\ref{ass:coarse}(c), they then agree on
      blue horizontal lines, by Assumption~\ref{ass:coarse}(d) also on the blue
      rectangles, and by the coupling construction also on grey blocks.  On
      the figure $K=4$.}
    \label{fig:propagation}
  \end{figure}

  We define
  \begin{equation*}
    J  \coloneq \inf \{ j \in \N : S_j = \infty \},
  \end{equation*}
  and say that the coupling is successful when both $J$ and $T_J$ are
  finite. In this case $\tilde \tau_\ext^{(Z_J,T_J)}$ is infinite and
  $\eta^1$ and $\eta^2$ are coupled by \eqref{eqn:shiftedcoupling} for all
  times larger than $T_J$. If the coupling is unsuccessful, then
  eventually, a.s., $\eta^1$ and $\eta^2$ run independently.

  The following observation is the key ingredient of the proof:
  \begin{equation}
    \label{eqn:orderedforever}
    \parbox{11cm}{If the coupling is successful, then $M^1_t-M^2_t =
      \Delta_J$ for all $t\ge T_J$.}
  \end{equation}
  In order to follow the proof of this observation, we recommend that the reader
  look at Figure~\ref{fig:propagation} first. By definition of $T_J$,
  $\eta^1_{T_J}\in G_\loc^\eta(Z_J)$ and the oriented percolation cluster
  described by $\tilde Z^{(Z_J,T_J)}$ never dies. In particular, this means
  that $\Block(Z_J,T_J)$ is good (the black square on the figure). By
  Assumption~\ref{ass:coarse}(b,c), for every $y\in\{-L_\bl,0,L_\bl\}$, it
  holds that $\eta^1_{T_J+T_\bl}\in G_\loc^\eta(Z_J+y)$ (illustrated by the
    square nodes) and the value of $\eta_{T_J+T_\bl}$ in $B_{2L_\bl}(Z_J+y)$
  only depends on $\{U(z,t):(z,t)\in \Block(Z_J,T_J)\}$ and not on
  $\eta^1_{T_J}$ (the balls $B_{2L_\bl}(Z_J+y)$ are illustrated by the blue
    line at level $T_J+T_\bl$). Moreover, by  Assumption~\ref{ass:coarse}(d),
  any configuration agreeing with $\eta^1_{T_J}$ in $B_{2L_\bl}(Z_J)$ will
  agree with $\eta^1$ for all $x\in B_{L_\bl/2}$ and $t\in [T_J, T_J+T_\bl]$
  (the lowest blue rectangle). Applying this argument iteratively means that
  the values
  \begin{equation*}
    \{\eta^1_t(z): z\in B_{2L_\bl}(x)
      \text{ with } \tilde Z^{(Z_J,T_J)}(x,t)=1\}
  \end{equation*}
  (the union of all blue lines on the figure) are determined by
  \begin{equation*}
    \{U^1(z,s): (z,s) \in \Block(x,t) \text{ which is good and }
      \tilde Z^{(Z_J,T_J)}(x,t)=1 \},
  \end{equation*}
  and every configuration agreeing with $\eta^1_{T_J}$ in $B_{K L_\bl}(Z_J)$
  will agree with it on the union of the blue rectangles (and the blue lines)
  in the figure. Since, by construction of the coupling,
  $\eta^1_{T_J} = \Delta_J + \eta^2_{T_J}$ on $B_{K L_\bl}(Z_J)$ and
  $U^1 = \Delta_J + U^2$ for all times  larger than $T_J$, by the flow
  construction from Assumption~\ref{ass:flow}, this means that
  $\eta^1 = \Delta_J + \eta^2$ on the union of the blue rectangles.

  It remains to consider the sites that are to the right of the cluster
  $\{\tilde Z^{(Z_J,T_J)} =1\}$ (light grey region on the figure). On the
  bottom of this region $\eta^1$ and $\Delta_J + \eta^2$ agree, by
  construction of $T_J$. By Assumption~\ref{ass:flow}, $\eta^1_t(x)$ depends
  only on $\eta^1_{t-1}(y)$ and $U^1(y,t)$ for $\abs{y-x}\le R$. Since the
  width of the blue rectangles is $L_\bl\ge 2R$ (by \eqref{eqn:L}), iterating
  this property, using again that $U^1 = \Delta_J + U^2$ for all times larger
  than $T_J$, one proves easily that $\eta^1 = \Delta_J + \eta^2$ also in the
  light grey region on Figure~\ref{fig:propagation}. Since the right-most
  particle in $\eta^1$ must be in the grey or blue region,
  \eqref{eqn:orderedforever} directly follows.

  We proceed by bounding the time $T_J$. First note that if $T_j$ is finite,
  then $\tilde \tau_\ext^{(Z_j,T_j)}$ has the same law as $\tilde \tau_\ext^0$,
  and thus
  \(
    P_{\eta_0^1, \eta_0^2}(S_j=\infty\mid T_j <\infty)
    =\P(\tilde\tau_\ext^0 = \infty) \eqqcolon q>0
  \)
  by Lemma~\ref{lem:percolation}\eqref{perc:extinction}. Therefore,
  \begin{equation}
    \label{eqn:Jlarge}
    P_{\eta_0^1, \eta_0^2} (J > k )
    \le P_{\eta_0^1, \eta_0^2}(T_j<\infty, S_j-T_j<\infty
      \text{ for all } 1\le j \le k)
    \le (1-q)^k,
  \end{equation}
  in particular $J$ is a.s.~finite. Further, by
  Lemma~\ref{lem:percolation}\eqref{perc:die_late}, for every $j\in \N$,
  $k \in \L_\ttime$,
  \begin{equation}
    \label{eqn:b2}
    P_{\eta_0^1, \eta_0^2}  ( k  \le S_j-T_j < \infty \mid T_j<\infty )
    \le \tilde C e^{-\tilde \gamma  k } .
  \end{equation}
  By Assumption~\ref{ass:coupling}, in the $k$-th sub-step of Step 1, the
  probability that $\mathcal Z_{kt_5}\neq\emptyset$ is at least $\varepsilon$
  (if both processes are still alive). Therefore,
  \begin{equation}
    \label{eqn:b1}
    P_{\eta_0^1, \eta_0^2} (kt_5< T_j-S_{j-1} <\infty\mid S_{j-1}<\infty)
    \le (1-\varepsilon)^{k}.
  \end{equation}
  In particular, the event $T_j-S_{j-1}=\infty$ a.s.~occurs only if one of
  the processes dies out. Since we assume that the initial conditions
  $\eta^1_0$ and $\eta^2_0$ contain of order $\rho$ well-started blocks,
  we obtain from the second claim of Lemma~\ref{lem:spread_goodconf} that
  \begin{equation}
    \label{eqn:b1a}
    P_{\eta_0^1, \eta_0^2} (\exists j\in \N: S_{j-1}<\infty, T_j=\infty)
    \le P_{\eta_0^1, \eta_0^2} (\tau_\ext^1<\infty
      \text{ or } \tau_\ext^2<\infty)
    \le 2 Ce^{-c \rho}.
  \end{equation}
  This implies that the probability that the coupling is unsuccessful is at
  most $2Ce^{-c\rho}$. Next, we observe that
  $T_J = (T_J-S_{J-1}) + \sum_{j=1}^{J-1} (S_j-T_j)+(T_j-S_{j-1})$, and thus
  \begin{equation}
    \begin{split}
      &\{\infty > T_J > k^2, J \le k\}
      \\&\subseteq \bigcup_{j=1}^k
      \Big\{\infty> T_j - S_{j-1}> \frac{k}{2} , S_{j-1}<\infty \Big\}
      \cup \Big\{ S_j - T_j > \frac{k}{2}, T_j<\infty \Big\}.
    \end{split}
  \end{equation}
  From this and~\eqref{eqn:b2}, \eqref{eqn:b1} we deduce that
  \begin{align}\label{eqn:b11}
    & P_{\eta_0^1, \eta_0^2} (\infty> T_{J} > k^2, J \le k
      ) \le k (1-\varepsilon)^{k/(2t_5)} + k \tilde C e^{-\tilde \gamma k /2}.
  \end{align}

  By the finite speed of spreading (the last part of
    Assumption~\ref{ass:flow}), since $\eta_0^2(x) = 0$ for all $x\ge -\ell$
  by assumption, for every $n\ge 0$,
  \begin{equation}
    \label{eqn:m2pos}
    M^2_n \le -\ell + n R.
  \end{equation}
  By applying Lemma~\ref{lem:spread_goodconf} with $\ell=\rho$, $z=0$ and
  $a=p_\nu V_\rho/2$ on process $\eta^1$, for every
  $n\ge (2\rho+1)T_\bl/\tilde v$ with $n\in \L_\ttime$,
  \begin{equation}
    P_{\eta_0^1, \eta_0^2}(M^1_n\le - \rho L_\bl) \le C e^{-c \rho}.
  \end{equation}
  On the event $M^1_n > - \rho L_\bl$, $M^1_k > -\rho L_\bl - n R$ for all
  $0\le k \le n$, again by the finite speed of spreading. Therefore,
  \begin{equation}
    \label{eqn:m1pos}
    P_{\eta_0^1, \eta_0^2}\Big(
      \bigcup_{k=1}^n \{M^1_k\le - \rho L_\bl - nR\}\Big)
    \le C e^{-c \rho}.
  \end{equation}
  Inequalities \eqref{eqn:m2pos} and \eqref{eqn:m1pos} together imply that
  for $n\ge (2\rho +1)T_\bl/\tilde v$ with $n\in \L_\ttime$,
  \begin{equation}
    \label{eqn:m12diff}
    P_{\eta_0^1, \eta_0^2}\Big(
      \bigcup_{k=0}^{n}\{M^1_k-M^2_k \le -\rho L_\bl +\ell - 2nR\}\Big)
    \le C e^{-c\rho}.
  \end{equation}

  We can now prove the claim of the proposition. To this end we fix
  $n = \bbfloor{(\ell-\rho L_\bl)/(2R)}$ (so that
    $-\rho L_\bl + \ell - 2n R$  appearing in \eqref{eqn:m12diff} is
    non-negative). By the assumption of the proposition this $n$ satisfies the
  condition required for \eqref{eqn:m12diff} to hold. If $T_J\le n$ (which
    also implies that the coupling is successful) and the
  complement of the event in \eqref{eqn:m12diff} occurs, then by
  \eqref{eqn:orderedforever} we have $M^1_k \ge M^2_k$ for all $k\ge 0$.
  Hence,
  \begin{equation*}
    \begin{split}
      P_{\eta_0^1, \eta_0^2}\big(\exists k\ge 0: M^1_k < M^2_k\big)
      \le{} &
      P_{\eta_0^1, \eta_0^2} \big(J > \sqrt n \big)
      + P_{\eta_0^1, \eta_0^2} (\infty > T_J > n, J \le \sqrt n )
      \\&+ P_{\eta_0^1, \eta_0^2}
      \big(\exists j\in \mathbb N: S_{j-1}<\infty, T_j =\infty\big)
      \\&+ P_{\eta_0^1, \eta_0^2}
      \Big(\bigcup_{k=0}^{n}\{M^1_k-M^2_k \le -\rho L_\bl +\ell - 2nR\}\Big).
    \end{split}
  \end{equation*}
  The four probabilities on the right-hand side can be bounded by
  \eqref{eqn:Jlarge}, \eqref{eqn:b11}, \eqref{eqn:b1a}, \eqref{eqn:m12diff},
  respectively. These bounds easily imply the claim of the proposition.
\end{proof}

\subsection{Delay to reach good starting configurations}
\label{sec:delaybounds}

In this section, we bound the time it takes $\eta^1, \eta^2$ conditioned to
survive and started from one particle at the origin and one particle at $x$,
respectively, to reach the starting configurations needed to use
the shifted coupling constructed in Proposition~\ref{prop:couplefronts}.

We emphasise that in the remainder of this section we mostly work
with the processes conditioned on survival, that is we work under
$\bar \P_{\eta_0}$. However, by Remark~\ref{rem:survival}, the survival
probability is bounded from below uniformly over all non-trivial initial
conditions. Therefore many bounds that were proved under $\P_{\eta_0}$ also
trivially hold under $\bar \P_{\eta_0}$, up to a deterministic constant factor.
This will repeatedly be used without further mention. We also point out that the
process $(\eta_n)_{n\ge 0}$ is a (strong) Markov process also under the
conditioned measure $\bar \P_{\eta_0}$.

To state the result, for  $x, \rho, \ell \in \N$ we set
\begin{equation}
  x_{\rho,\ell} \coloneq x+ \ell + R \bbceil{( \Cr{c:nwalb}^{-1} \rho)^2},
\end{equation}
and define the sets of configurations satisfying the assumptions of
Proposition~\ref{prop:couplefronts}, shifted to the right by $x_{\rho,\ell}$
\begin{equation}
  \label{eqn:Thetas}
  \begin{split}
    \Theta_x^1 &= \Theta_x^1(\rho,\ell)
    \coloneq\big\{ \eta \in E^\Z :
      \abs{W(\eta) \cap B_\rho^\bl(\bbceil{ x_{\rho,\ell}  })}
      \ge p_\nu V_\rho/2 \big\},
    \\ \Theta_x^2 &= \Theta_x^2(\rho)
    \coloneq\big\{ \eta\in E^\Z :\abs{ W(\eta)}  \ge p_\nu V_\rho/2, \
      \eta(y)=0 \ \forall y > x_{\rho,\ell} - \ell
      \big\}.
  \end{split}
\end{equation}

\begin{lemma}
  \label{lem:delaybounds}
  There are constants $C,c \in (0,\infty)$
  such that:
  \begin{enumerate}
    \item For every $x>0$, $\rho < \varepsilon  \sqrt{x}$ (with
      $\varepsilon $ as in Proposition~\ref{prop:hitandblock}), $\ell > 0$,
    and every starting configuration $\zeta$ which is non-zero only at the
    origin
    \begin{equation*}
      \bar \P_\zeta\big(\eta_{{\tau_\hit(x)+ \bbceil{C(\rho^2+\ell)}}}
        \notin \Theta^1_x(\rho ,\ell)\big)
      \le C(1+x^2)e^{-c \rho } + Ce^{-c\sqrt x}.
    \end{equation*}

    \item
    For every $\rho \ge 0$, and every starting configuration $\zeta$ which is
    non-zero only at $x$
    \begin{equation*}
      \bar \P_\zeta\big( \eta_{\bbceil{(\Cr{c:nwalb}^{-1} \rho )^2}} \not\in
        \Theta_x^2(\rho) \big) \le C e^{-c \rho}
    \end{equation*}
    with $\Cr{c:nwalb}$ from Lemma~\ref{lem:neverwalkalone}.
  \end{enumerate}
\end{lemma}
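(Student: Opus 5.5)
For part (a) I would combine Proposition~\ref{prop:hitandblock}, in the lattice-aligned form of Remark~\ref{rem:hitandblock}, with the strong Markov property at $\tau_\hit(x)$ and the spreading Lemma~\ref{lem:spread_goodconf}. The target $\bbceil{x_{\rho,\ell}}$ lies at distance $\ell + R\bbceil{(\Cr{c:nwalb}^{-1}\rho)^2} + O(L_\bl)$ to the right of $x$, so the plan is to first locate a good ball near $x$ at the hitting time and then let the oriented percolation carry well-started blocks to the target.

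Concretely, I would start from the estimate of Remark~\ref{rem:hitandblock}. With $\bar\P_\zeta$-probability at least $1-C(e^{-c\sqrt x}+x^2e^{-c\rho})$, there is a point $z\in B^\bl_{\rho^2}(\bbracket x)$ with $\abs{W(\eta_{\bbfloor{\tau_\hit(x)}})\cap B^\bl_\rho(z)}\ge p_\nu V_\rho/2$. Passing from $\P$ to $\bar\P$ costs only a constant factor, by Remark~\ref{rem:survival}, and on $\{\tau_\ext=\infty\}$ the time $\tau_\hit(x)$ is finite. I would then apply the strong Markov property at the stopping time $\bbfloor{\tau_\hit(x)}$, which lies in $\L_\ttime$. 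The time $\bbfloor{\tau_\hit(x)}$ is a function of $\tau_\hit(x)$, and that is enough here. Next I would translate so that $z$ becomes the origin and apply Lemma~\ref{lem:spread_goodconf} with $\ell_{\text{lemma}}=\rho$, $a=p_\nu V_\rho/2$, and target point $\bbceil{x_{\rho,\ell}}-z$. The distance to the target is at most $(\rho^2+1)L_\bl+\ell+R((\Cr{c:nwalb}^{-1}\rho)^2+T_\bl)$. The required elapsed time $(\rho+\abs{\cdot}/L_\bl+\rho+1)T_\bl/\tilde v$ is therefore $\le C(\rho^2+\ell)$, after absorbing the $O(1)$ terms for $\rho\ge1$.

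The lemma gives the first claim at a fixed lattice time $n$. The statement, however, asks about time $\tau_\hit(x)+\bbceil{C(\rho^2+\ell)}$, which need not be aligned with $\L_\ttime$. The time elapsed since $\bbfloor{\tau_\hit(x)}$ is $N=\tau_\hit(x)-\bbfloor{\tau_\hit(x)}+\bbceil{C(\rho^2+\ell)}$, and $N$ takes one of only $T_\bl$ values. To handle this I would run the percolation argument up to the lattice time $\bbfloor{\cdot}$ just before, which yields $\ge p_\nu V_\rho/2$ well-started blocks in the ball. I then need goodness to persist over fewer than $T_\bl$ further steps. I expect this to be the main obstacle, because well-startedness is only asserted at lattice times. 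Two fixes are available. The first is to note that $\Theta^1_x$ is defined via $W$, whose blocks are lattice-based, and to read the time in the statement as lattice-aligned; the ceiling notation $\bbceil{\cdot}$ suggests $\tau_\hit(x)$ is meant to be replaced by $\bbfloor{\tau_\hit(x)}$, consistent with Remark~\ref{rem:hitandblock}. The second is, failing that, to use a union bound over the $T_\bl$ possible offsets together with Lemma~\ref{lem:spread_goodconf} for each. Putting the pieces together gives the error $C(1+x^2)e^{-c\rho}+Ce^{-c\sqrt x}$; the term $\tilde C'(1+V_\rho)e^{-\tilde\gamma' a}$ is absorbed because $a\asymp\rho$.

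For part (b) I would apply Lemma~\ref{lem:blocks_somewhere}, translated to start at $x$, with $n=\bbceil{(\Cr{c:nwalb}^{-1}\rho)^2}$, so that $\Cr{c:nwalb}\sqrt n\ge\rho$. With $\bar\P_\zeta$-probability at least $1-Ce^{-c\rho}$ there is a ball of radius $\Cr{c:nwalb}\sqrt n\ge\rho$ containing at least $p_\nu V_{\Cr{c:nwalb}\sqrt n}/2\ge p_\nu V_\rho/2$ well-started blocks. The only other requirement of $\Theta_x^2$ is the support condition. By the finite range of Assumption~\ref{ass:flow}, $\eta_n(y)=0$ for all $y>x+nR$. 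Since $x_{\rho,\ell}-\ell=x+R\bbceil{(\Cr{c:nwalb}^{-1}\rho)^2}=x+Rn$, this condition holds deterministically, which completes part (b).
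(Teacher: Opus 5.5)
Part (b) is exactly the paper's argument. Part (a) has a genuine gap, caused by the detour through Remark~\ref{rem:hitandblock}.

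\textbf{The Markov step fails.} You apply the strong Markov property at $\bbfloor{\tau_\hit(x)}$ and justify it by saying this time is ``a function of $\tau_\hit(x)$''. That is not enough: $\bbfloor{\tau_\hit(x)}$ is not a stopping time. For $t\in\L_\ttime$ we have $\{\bbfloor{\tau_\hit(x)}\le t\}=\{\tau_\hit(x)\le t+T_\bl-1\}$, which depends on $\eta_{t+1},\dots,\eta_{t+T_\bl-1}$. Conditioning on $\{\bbfloor{\tau_\hit(x)}=t\}$ therefore biases the evolution after $t$, since it forces a particle into $[x,\infty)$ within the next $T_\bl$ steps. So you cannot restart Lemma~\ref{lem:spread_goodconf} from $\eta_{\bbfloor{\tau_\hit(x)}}$.

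\textbf{Neither fix repairs this.} The first changes the statement, and it still needs the Markov property at the same non-stopping time. The second does not work as described. From a good ball at a time $t$, Lemma~\ref{lem:spread_goodconf} only controls $W(\eta_{t+n})$ for $n\in\L_\ttime$. It says nothing at times $t+s+n$ with $0<s<T_\bl$, and nothing in the paper gives persistence of well-startedness off the time lattice.

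\textbf{The obstacle you identify is not real, and the paper avoids it.} $W(\eta)$, and hence $\Theta^1_x$, depends only on the configuration $\eta$ (through the spatial lattice $\L_\sspace$), not on absolute time. The time lattice in Lemma~\ref{lem:spread_goodconf} is anchored at the starting time of whatever process it is applied to. So the paper proceeds as follows:
\begin{enumerate}
  \item Set $A_{x,\rho}=\{D_\rho(x,\eta_{\tau_\hit(x)})\le\rho^2\}$ and bound $\bar\P_\zeta(A_{x,\rho}^c)$ directly by Proposition~\ref{prop:hitandblock}, not the Remark.
  \item Apply the strong Markov property at the genuine stopping time $\tau_\hit(x)$.
  \item Apply Lemma~\ref{lem:spread_goodconf} to the restarted process, whose block lattice starts at time $\tau_\hit(x)$, at time $n=\bbceil{C(\rho^2+\ell)}\in\L_\ttime$.
\end{enumerate}
After that, the rest of your computation goes through unchanged: the translation, the distance estimate, and absorbing $\tilde C'(1+V_\rho)e^{-\tilde\gamma' a}$ using $a\asymp\rho$.

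\textbf{A smaller point.} You assert without proof that $\tau_\hit(x)<\infty$ holds $\bar\P_\zeta$-a.s. This is needed for the event in (a) to make sense. It is not covered by Lemma~\ref{lem:hittingtime}, which only bounds $\P_\zeta(x/v_1+a\le\tau_\hit(x)<\infty)$. The paper derives it from Lemmas~\ref{lem:blocks_somewhere}, \ref{lem:shift_blocks} and~\ref{lem:incoupled}.
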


\begin{proof}
  We first show (b). Set $n \coloneq \bbceil{(\Cr{c:nwalb}^{-1} \rho )^2}
  \in \L_\ttime$ and note that $\Cr{c:nwalb} \sqrt n \ge \rho$. By
  Lemma~\ref{lem:blocks_somewhere} (applied after translating the origin
    to $x$),
  \begin{equation*}
    \bar\P_\zeta\big( D_{\Cr{c:nwalb} \sqrt n}(x, \eta_{n} ) <
      \infty\big)
    \ge 1-C e^{-c \rho}.
  \end{equation*}
  On the event $\{D_{\Cr{c:nwalb} \sqrt n}(x,\eta_{n}) < \infty\}$, the
  configuration $\eta_n$ contains a ball (at the block scale) of radius
  $\Cr{c:nwalb} \sqrt n \ge \rho$ with at least
  $p_\nu V_{\Cr{c:nwalb}\sqrt n}/2 \ge p_\nu V_\rho/2$ well-started blocks,
  so in particular $\abs{W(\eta_{n})} \ge p_\nu V_\rho/2$.
  Moreover, since $x_{\rho,\ell}-\ell = x + Rn$, the finite speed of spreading
  gives that
  \(
    \{\eta_{n} (y)= 0 \, \forall y > x_{\rho,\ell}-\ell\}
  \)
  always holds true under $\bar \P_\zeta$, and claim (b) follows.

  To prove (a), let $\zeta$ be any configuration which is non-zero only at
  the origin. We first remark that $\tau_\hit(x)<\infty$,
  $\bar \P_\zeta$-a.s. This can easily be seen from
  Lemma~\ref{lem:blocks_somewhere} (which allows us to create arbitrarily
    many well-started blocks with an arbitrarily large
    $\bar \P_\zeta$-probability), Lemma~\ref{lem:shift_blocks} (which proves
    that with an arbitrarily large $\P_\zeta$-probability, and thus also
    $\bar\P_\zeta$-probability, when conditioned on having many well-started
    blocks at some time, $x$ will be eventually in the coupled region), and
  Lemma~\ref{lem:incoupled} (which states that $x$ is eventually hit, when in
    coupled region).

  Define the event
  $A_{x,\rho } \coloneq \{D_\rho(x,\eta_{ \tau_\hit(x)}) \le \rho^2\}$.
  By Proposition~\ref{prop:hitandblock} and the remark from the last paragraph,
  there are constants $c,C \in (0,\infty)$, and $\varepsilon\in (0,1)$ such that
  for every $\rho < \varepsilon \sqrt{x}$
  \begin{equation*}
    \bar{\P}_\zeta( A_{x,\rho }^c )
    \le C (e^{-c \sqrt x} + e^{-c \rho + 2 \log x}).
  \end{equation*}
  On the event $A_{x,\rho }$, we have
  \(
    \abs{ W(\eta_{ \tau_\hit(x)}) \cap B_{\rho+\rho^2}^\bl(\bbracket x)}
    \ge p_\nu V_\rho/2
  \).
  Therefore, by Lemma~\ref{lem:spread_goodconf} applied with
  $\ell=\rho+\rho^2$, $\rho=\rho$, $a= p_\nu V_\rho/2$,
  $z=\bbceil {x_{\rho,\ell}} - \bbracket x$, and
  $n = \bbceil{C(\rho^2 + \ell)}$ (with
    $C$ large, so that $n$ satisfies the last assumption of this lemma),
  using also the translation invariance (shift by $\bbracket x$) and the
  strong Markov property,
  \begin{equation*}
    \bar \P_\zeta( \eta_{\tau_\hit(x)+n} \notin \Theta_x^1 \mid A_{x,\rho })
    \le C e^{-c \rho} + C\rho e^{-c\rho} \le Ce^{-c\rho }.
  \end{equation*}
  Combining the last two displays proves (a).
\end{proof}

\subsection{Shifted coupling conditioned on survival}
\label{sec:shiftandsurvive}

The shifted coupling from Proposition~\ref{prop:couplefronts} works under the
unconditioned measures, but it is easy to change it so that it works for
conditioned measures. Recall the sets $\Theta^1_x, \Theta^2_x$ from
\eqref{eqn:Thetas}, and $M_n^1, M_n^2$ from \eqref{eqn:Maxdef}.

\begin{lemma}
  \label{lem:shiftedconditioned}
  There are constants $c,C\in (0,\infty)$ so that for every
  $x,\rho,\ell \in \N$, $\eta^1_0\in \Theta_x^1(\rho,\ell)$ and
  $\eta^2_0\in \Theta_x^2(\rho)$ there exists a coupling
  $\bar P_{\eta^1_0,\eta^2_0}$ of two processes $\eta^1$, $\eta^2$
  respectively distributed according to $\bar \P_{\eta^1_0}$ and
  $\bar \P_{\eta^2_0}$, such that
  \begin{equation*}
    \bar P_{\eta^1_0,\eta^2_0}(M_k^1 \ge M_k^2 \text{ for all }k\ge 0)
    \ge 1 -  C e^{-c \rho}
    - C \sqrt{\ell-\rho L_\bl}\,e^{ -c \sqrt{\ell -\rho L_\bl}}.
  \end{equation*}
\end{lemma}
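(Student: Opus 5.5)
Having $\eta^1$ and $\eta^2$ each start from the translated configurations $\eta^i_0$, I would first translate by $-\bbceil{x_{\rho,\ell}}$. Then $\eta^1_0$ and $\eta^2_0$ satisfy the hypotheses of Proposition~\ref{prop:couplefronts}, up to replacing $\ell$ by $\ell$ minus a bounded rounding term, which is harmless. The coupling $P=P_{\eta^1_0,\eta^2_0}$ from that proposition then gives $P(G^c)\le 2Ce^{-c\rho}+C\sqrt{\ell-\rho L_\bl}\,e^{-c\sqrt{\ell-\rho L_\bl}}$, where $G=\{M^1_k\ge M^2_k\ \forall k\}$. The task is to turn this unconditioned coupling into one whose marginals are $\bar\P_{\eta^1_0}$ and $\bar\P_{\eta^2_0}$.

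The natural construction is to condition $P$ on the joint survival event $S=\{\tau^1_\ext=\infty,\tau^2_\ext=\infty\}$. By the second claim of Lemma~\ref{lem:spread_goodconf}, $P(S^c)\le 2Ce^{-c\rho}$. However, $P(\cdot\mid S)$ does not in general have the right marginals, since conditioning on both processes surviving distorts each marginal. I therefore plan a maximal-coupling repair. Let $\mu^i$ be the law of $\eta^i$ under $P(\cdot\mid S)$, and let $\bar\mu^i=\bar\P_{\eta^i_0}$ be the law under $P(\cdot\mid \tau^i_\ext=\infty)$. The total variation distance between them is at most of order $P(\tau^{3-i}_\ext<\infty)/P(\tau^i_\ext=\infty)$, since both are conditionings of the same law on events differing by at most $P(S^c)$. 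Because survival probabilities are at least $1-Ce^{-c\rho}$, this gives $\|\mu^i-\bar\mu^i\|_\tv\le Ce^{-c\rho}$.

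The coupling is then built in three steps. First, sample $(\eta^1,\eta^2)$ from $P(\cdot\mid S)$. Second, for each $i$, use a maximal coupling of $\mu^i$ and $\bar\mu^i$ (via a gluing lemma, conditionally on $\eta^i$) to produce $\bar\eta^i\sim\bar\mu^i$ with $P(\bar\eta^i\ne\eta^i)\le Ce^{-c\rho}$. Third, glue the two: take $\bar\eta^1$ and $\bar\eta^2$ conditionally independent given $(\eta^1,\eta^2)$. The resulting pair has marginals $\bar\P_{\eta^1_0}$ and $\bar\P_{\eta^2_0}$. The event $\{M^1_k\ge M^2_k\ \forall k\}$ for $(\bar\eta^1,\bar\eta^2)$ fails only if $(\eta^1,\eta^2)\notin G$ or some $\bar\eta^i\neq\eta^i$. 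Its failure probability is therefore at most
\[
\frac{P(G^c)}{P(S)}+2Ce^{-c\rho}.
\]
Since $P(S)\ge 1/2$ for $\rho$ large, this yields the claimed bound. Small $\rho$ is handled by enlarging $C$, so that the right-hand side of the lemma is negative and the claim is trivial.

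The main obstacle is the total variation estimate. Write $A_i=\{\tau^i_\ext=\infty\}$. For an event $B$ in the $\eta^i$-path space,
\[
\big|P(B\mid S)-P(B\mid A_i)\big|\le \frac{2P(A_i\setminus S)}{P(A_i)},
\]
using $S\subset A_i$. The bound then follows from $P(A_i\setminus S)\le P(\tau^{3-i}_\ext<\infty)\le Ce^{-c\rho}$. Applying Lemma~\ref{lem:spread_goodconf} here requires knowing that $\eta^2_0$ has at least of order $V_\rho$ well-started blocks in some ball; $\Theta_x^2$ only gives $\abs{W(\eta^2_0)}\ge p_\nu V_\rho/2$ in total. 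For that case I would use Lemma~\ref{lem:percolation}\eqref{perc:extinction} directly, since it holds for arbitrary $A$ with bound $\tilde Ce^{-\tilde\gamma\abs A}$, together with \eqref{eqn:fromZtoeta}.
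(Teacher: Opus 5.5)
Your argument is correct and is essentially the paper's: you glue the unconditioned front coupling of Proposition~\ref{prop:couplefronts} with maximal couplings between unconditioned and survival-conditioned laws, and the mismatch probability is controlled by extinction probabilities of order $e^{-c\rho}$ via Lemma~\ref{lem:spread_goodconf}. The only difference is your intermediate conditioning on joint survival $S$, which is unnecessary: the paper glues $P_{\eta^1_0,\eta^2_0}$ directly with maximal couplings of $\P_{\eta^i_0}$ and $\bar\P_{\eta^i_0}$ (whose total variation distance is at most $\P_{\eta^i_0}(\tau_\ext<\infty)/\P_{\eta^i_0}(\tau_\ext=\infty)$), which avoids the $1/P(S)$ factor and your separate small-$\rho$ case.
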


\begin{proof}
  Every configuration in $\Theta^i_x$, $i=1,2$, contains at least
  $p_\nu V_\rho/2 \sim c \rho$ well-started blocks. Hence, by
  Lemma~\ref{lem:spread_goodconf},
  $\P_{\eta_0^i}(\tau_\ext = \infty) \ge 1-Ce^{-c\rho}$. In particular,
  for any $\eta_0 \in \Theta^i_x$, $i=1,2$,
  \begin{equation*}
    d_\tv(\bar \P_{\eta_0}, \P_{\eta_0}) \le Ce^{-c\rho},
  \end{equation*}
  where $d_\tv$ stands for the total variation distance. Therefore, there
  exists a coupling $P_{\eta_0}$ of two processes $\hat\eta$ and
  $\eta$ distributed according to  $\P_{\eta_0}$ and
  $\bar \P_{\eta_0}$, respectively, so that
  \begin{equation}
    P_{\eta_0}(\eta_n = \hat \eta_n \text { for all } n\ge 0)
    \ge 1-C e^{-c\rho }.
  \end{equation}
  The coupling $\bar P_{\eta^1_0,\eta^2_0}$ of $\eta^1, \eta^2$ required by
  the lemma is then constructed by chaining (or disintegration) of
  $P_{\eta^1_0}$, $P_{\eta^1_0,\eta^2_0}$ from
  Proposition~\ref{prop:couplefronts}, and $P_{\eta^2_0}$: For all measurable
  sets $A,B$,
  \begin{equation*}
    \bar P_{\eta^1_0,\eta^2_0}(\eta^1 \in A, \eta^2 \in B) =
    \int P_{\eta^1_0,\eta^2_0}(\D \hat \eta^1,  \D \hat \eta^2 )
    P_{\eta^1_0}(\eta \in A \mid \hat \eta^1)
    P_{\eta^2_0}(\eta \in B \mid \hat \eta^2).
  \end{equation*}
  It can be easily proved that
  $d_\tv(\bar P_{\eta^1_0,\eta^2_0}, P_{\eta^1_0,\eta^2_0}) \le 2Ce^{-c\rho }$.
  This and Proposition~\ref{prop:couplefronts} then directly imply the claim
  of the lemma.
\end{proof}

We now extend the coupling constructed in the last lemma slightly, in order to
give the second process a time to reach $\Theta_x^2$.

\begin{lemma}
  \label{lem:shiftedcondfromsinglepoint}
  The claim of Lemma~\ref{lem:shiftedconditioned} holds (with different
    constants $c,C$) if $\eta_0^1\in\Theta_x^1$ and $\eta_0^2$ is non-zero only
  at $x$.
\end{lemma}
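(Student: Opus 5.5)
The plan is to run $\eta^1$ and $\eta^2$ independently during an initial time window of length $n \coloneq \bbceil{(\Cr{c:nwalb}^{-1}\rho)^2} \in \L_\ttime$. At time $n$ we apply Lemma~\ref{lem:shiftedconditioned}, but to $\eta^1$ restarted from $\eta^1_n$ and $\eta^2$ restarted from $\eta^2_n$. Concretely, we let $\eta^1$ and $\eta^2$ evolve independently under $\bar\P_{\eta^1_0}$ and $\bar\P_{\eta^2_0}$ up to time $n$. By the Markov property of the conditioned process, it then suffices to couple their futures from time $n$. If $\eta^1_n \in \Theta^1_x(\rho,\ell')$ and $\eta^2_n \in \Theta^2_x(\rho)$ for suitable parameters, we use the coupling $\bar P_{\eta^1_n,\eta^2_n}$ of Lemma~\ref{lem:shiftedconditioned} for the remaining evolution. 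Otherwise we continue independently.

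The probability of the bad event splits into three parts: $\eta^2_n \notin \Theta^2_x$, $\eta^1_n \notin \Theta^1_x$, and failure of the coupling started at time $n$. The first part is at most $Ce^{-c\rho}$ by Lemma~\ref{lem:delaybounds}(b); this is exactly why $n$ is chosen this way. For the second part, we must check that $\Theta^1_x$ is essentially preserved over $n$ steps. Since $\eta^1_0\in\Theta^1_x(\rho,\ell)$ has at least $p_\nu V_\rho/2$ well-started blocks in $B^\bl_\rho(\bbceil{x_{\rho,\ell}})$, we apply Lemma~\ref{lem:spread_goodconf} with $z=0$ (after translating by $\bbceil{x_{\rho,\ell}}$) and $a=p_\nu V_\rho/2$. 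For this we need $n\ge (2\rho+1)T_\bl/\tilde v$, which holds since $n\asymp\rho^2$, except for small $\rho$, which is absorbed into constants. This gives $\abs{W(\eta^1_n)\cap B^\bl_\rho(\bbceil{x_{\rho,\ell}})}\ge p_\nu V_\rho/2$ with probability at least $1-C(1+V_\rho)e^{-c\rho}$ under $\P$, and hence, by Remark~\ref{rem:survival}, also under $\bar\P$. So $\eta^1_n\in\Theta^1_x(\rho,\ell)$, and the centre does not need to move.

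The remaining issue is the choice of $x$ and $\ell$ in $\Theta^2$. The set $\Theta^2_x(\rho)$ requires $\eta(y)=0$ for $y> x_{\rho,\ell}-\ell = x+Rn$, which is exactly what Lemma~\ref{lem:delaybounds}(b) provides, with the same $x_{\rho,\ell}$ as in $\Theta^1_x$. So both configurations at time $n$ lie in the sets required by Lemma~\ref{lem:shiftedconditioned}, with unchanged parameters $x,\rho,\ell$. That lemma, applied at time $n$, gives $M^1_k\ge M^2_k$ for all $k\ge n$ with probability at least $1-Ce^{-c\rho}-C\sqrt{\ell-\rho L_\bl}\,e^{-c\sqrt{\ell-\rho L_\bl}}$.

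The main obstacle is the initial window $k\in[0,n]$, where the processes run independently and we must still show $M^1_k\ge M^2_k$. For $\eta^2$ this is easy: finite speed gives $M^2_k\le x+kR\le x+nR=x_{\rho,\ell}-\ell$. For $\eta^1$, we use the event above that $\eta^1_n$ has many well-started blocks near $\bbceil{x_{\rho,\ell}}$, so that $M^1_n\ge x_{\rho,\ell}-\rho L_\bl$. Backwards finite speed (last part of Assumption~\ref{ass:flow}) then gives $M^1_k\ge x_{\rho,\ell}-\rho L_\bl-(n-k)R$, which is at least $M^2_k$ provided $\ell\ge \rho L_\bl+nR$. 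When $\ell<\rho L_\bl+ C\rho^2$, I expect the claimed bound to be trivial anyway once constants are adjusted: $\sqrt{\ell-\rho L_\bl}\,e^{-c\sqrt{\ell-\rho L_\bl}}$ is then comparable to $e^{-c'\rho}$, and shrinking $c$ absorbs the difference. In that regime we could also use the head start in the regime of Proposition~\ref{prop:couplefronts}. Combining the three error terms gives the claim with new constants.
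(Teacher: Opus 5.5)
Your construction is the paper's: run the two conditioned processes independently for $n=\bbceil{(\Cr{c:nwalb}^{-1}\rho)^2}$ steps. Then $\eta^1_n\in\Theta^1_x(\rho,\ell)$ with unchanged centre by Lemma~\ref{lem:spread_goodconf}, and $\eta^2_n\in\Theta^2_x(\rho)$ by Lemma~\ref{lem:delaybounds}(b). Finally restart with the coupling of Lemma~\ref{lem:shiftedconditioned}, using the Markov property under $\bar\P$. You are right that the window $k\le n$ needs an argument; the paper only says $M^1_k\ge M^2_k$ holds ``due to the definition of $x_{\rho,\ell}$''. However, your treatment of that window has a slip, and the fallback you give for the resulting bad regime is not valid.

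The slip is that you bound $M^2_k\le x+nR$ instead of $M^2_k\le x+kR$. On $\{\eta^1_n\in\Theta^1_x\}$ there is a particle in $B_{KL_\bl}(z)$ for some $z\in B^\bl_\rho(\bbceil{x_{\rho,\ell}})$. Hence $M^1_n\ge x_{\rho,\ell}-(\rho+K)L_\bl$; note the $KL_\bl$ you dropped. Backward finite speed and $x_{\rho,\ell}=x+\ell+Rn$ then give
\[
M^1_k\;\ge\; x_{\rho,\ell}-(\rho+K)L_\bl-(n-k)R\;=\;x+kR+\ell-(\rho+K)L_\bl\;\ge\; M^2_k+\ell-(\rho+K)L_\bl .
\]
The $nR$ terms cancel exactly, which is why $x_{\rho,\ell}$ contains $R\bbceil{(\Cr{c:nwalb}^{-1}\rho)^2}$. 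So you only need $\ell\ge(\rho+K)L_\bl$, with no $\rho^2$ term. For $\rho$ larger than a constant this follows from the hypothesis $(\ell-\rho L_\bl)/(2R)\ge T_\bl+(2\rho+1)T_\bl/\tilde v$ of Proposition~\ref{prop:couplefronts}, on which Lemma~\ref{lem:shiftedconditioned} rests; small $\rho$ are absorbed into $C$. No case distinction is then needed.

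Your fallback for $\ell<\rho L_\bl+C\rho^2$ would not work. When $\ell-\rho L_\bl\asymp\rho^2$, the right-hand side of the claimed bound is still $1-O(\rho e^{-c\rho})$, far from trivial, and shrinking $c$ does not change that. In that regime your argument gives no estimate at all for the probability that $M^1_k<M^2_k$ for some $k\le n$.
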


\begin{proof}
  Let $t_0 = \bbceil{(\Cr{c:nwalb}^{-1}\rho )^2}$ (cf.
    Lemma~\ref{lem:delaybounds}(b)). To construct the required coupling, for
  the first $t_0$ time steps we let $\eta^1$ and $\eta^2$ run independently
  according to $\bar \P_{\eta_0^1}$ and $\bar \P_{\eta_0^2}$, respectively.
  If $\eta^1_{t_0} \notin \Theta_x^1$ or $\eta^2_{t_0} \notin \Theta_x^2$, the
  coupling fails, and we let the two processes run independently forever.
  By Lemma~\ref{lem:spread_goodconf} and Lemma~\ref{lem:delaybounds}(b),
  \begin{equation}
    \bar{\P}_{\eta^1_0}(\eta^1_{t_0} \notin  \Theta^1_x) \le Ce^{-c\rho }
    \quad \text{and}\quad
     \bar{\P}_{\eta^2_0}(\eta^2_{t_0} \notin  \Theta^2_x) \le Ce^{-c\rho },
  \end{equation}
  so this occurs with probability at most $Ce^{-c\rho }$. Note also that due
  to the definition of $x_{\rho ,\ell}$, we always have $M^1_k \ge M^2_k$ for
  all $k \le t_0$.

  On the other hand, if $\eta_{t_0}^1 \in \Theta_x^1$ and
  $\eta_{t_0}^2 \in \Theta_x^2$, after the time $t_0$, we can couple $\eta^1$
  and $\eta^2$ using  $\bar P_{\eta^1_{t_0}, \eta^2_{t_0}}$ constructed in
  Lemma~\ref{lem:shiftedconditioned}. The coupling constructed in these two
  steps has all claimed properties.
\end{proof}

\subsection{Proof of Theorem~\ref{thm:perturbed_subadditivity}}
\label{sec:proof}

We now have all ingredients to prove Theorem~\ref{thm:perturbed_subadditivity}.

\begin{proof}[Proof of Theorem~\ref{thm:perturbed_subadditivity}]
  Since the roles of $\bar \sigma_\hit(y)$ and $\bar \tau_\hit(x)$ are
  symmetric in the conditions (a)--(c) of the theorem, we can assume, without
  loss of generality, that $x \ge y$.

  We now construct a probability space $(\Omega ,\mathcal A, \Q)$ and the
  random variables $\bar \tau_\hit(x)$, $\bar \tau_\hit(x+y)$,
  $\bar \sigma_\hit(y)$ and $V(x,y)$ having the properties stated in the
  theorem. Let $\eta^1$ be a process defined on $(\Omega , \mathcal A, \Q)$
  distributed according to $\bar \P_\zeta $. We set
  \begin{align*}
    \bar \tau_\hit (x) &= \inf\{n\ge 0: \exists z\ge x \, \eta_n(z) \neq 0\},
    \\\bar \tau_\hit (x+y)
    &= \inf\{n\ge 0: \exists z\ge x+y\, \eta_n(z) \neq 0\}.
  \end{align*}
  These times have the distributions required in (a).

  We then fix $\rho = \floor{x^{1/6}}$,
  $\ell = \floor{x^{1/3}}$, $C$ as in Lemma~{\ref{lem:delaybounds}}, set
  $T = \bar \tau_\hit (x) + \bbceil{C(\rho^2+\ell)}$, and distinguish two cases:
  \begin{enumerate}[label=(\arabic*)]
    \item If
    $\eta^1_T \in \Theta_x^1(\rho ,\ell)$, we construct on
    $(\Omega , \mathcal A, \Q)$ another process $\eta^2$, so that the joint
    law of $\eta^1_{T+\cdot}$ and $\eta^2_\cdot$ is the coupling
    $\bar P_{\eta^1_T,\theta_x\zeta}$ constructed in
    Lemma~\ref{lem:shiftedcondfromsinglepoint}. We set
    $V(x,y) =\bbceil{C(\rho^2+\ell)}$ if $M^1_{T+k} \ge M^2_k$ for all
    $k\ge 0$ (by Lemma~\ref{lem:shiftedcondfromsinglepoint} this has
      probability at least $1- C e^{-cx^{1/6}}$). Otherwise we set
    $V(x,y) = \bar \tau_\hit(x+y)$.

    \item If $\eta^1_T \notin \Theta_x^1(\rho ,\ell)$, we let $\eta^2$ be
    $\bar \P_{\theta_x\zeta}$-distributed, independent of $\eta^1$. We set
    $V(x,y) = \bar \tau_\hit(x+y)$.
  \end{enumerate}
  In both cases, we set
  $\bar \sigma_\hit(y) = \inf\{n\ge 0: \exists z\ge x+y\, \eta^2_n(z) \neq 0\}$.

  Let $\mathcal F_T = \sigma (\eta^1_k: k\le T)$. By construction,
  $\Q$-a.s.~under the conditional law $\Q(\cdot \mid \mathcal F_T)$, $\eta^2$
  is $\bar \P_{\theta_x \zeta }$-distributed. Hence, due to the translation
  invariance, $\bar \sigma_\hit(y)$ has the required distribution, and
  $\bar \sigma_\hit(y)$ is independent of $\mathcal F_T$. In particular,
  $\bar \sigma_\hit(y)$ is independent of $\bar \tau_\hit(x)(\le T)$ as
  required in (a).

  We now check (b). In the cases when $V(x,y) = \bar \tau_\hit(x+y)$ there is
  nothing to show. In the only remaining case we have $M^1_{T+k} \ge M^2_k$ for
  all $k\ge 0$. This implies that
  \begin{equation*}
    \bar \sigma_\hit(y)
    \ge \bar \tau_\hit(x+y) - T
    = \bar \tau_\hit(x+y)
    - \big(\bar \tau_\hit(x) + \bbceil{C(\rho^2+\ell)}\big)
    = \bar \tau_\hit(x+y) - \bar \tau_\hit(x) - V(x,y),
  \end{equation*}
  and (b) follows.

  It remains to check (c). Let $A$ be the event
  $\{V(x,y) = \bbceil{C(\rho^2+\ell)}\}$. By Lemma~\ref{lem:delaybounds}(a),
  $\Q(\eta^1_T\in \Theta_x^1(\rho ,\ell)) \ge 1 - C e^{-c x^{1/6}}$. Together
  with the observation in point (1), this implies that
  $\Q(A^c) \le  C e^{-c x^{1/6}}$. Hence,
  \begin{equation*}
    \begin{split}
      E^{\Q}\big[V(x,y)^2\big]
      &= E^{\Q}\big[V(x,y)^2 \ind_A \big] + E^{\Q}\big[V(x,y)^2 \ind_{A^c} \big]
      \\&\le C x^{2/3} + E^{\Q}\big[\bar \tau_\hit(x+y)^2 \ind_{A^c} \big]
      \\&\le C x^{2/3} + E^{\Q}\big[ \tau_\hit (x+y)^4 \big]^{1/2} \Q(A^c)^{1/2}
    \end{split}
  \end{equation*}
  By Corollary~\ref{cor:hit_moments},
  $E^{\Q}\big[\tau_\hit (x+y)^4 \big] \le C(x+y)^4$, and (c) follows.
\end{proof}

\section{Proof of Theorem~\ref{thm:shape1}}
\label{sec:proof_speed}

When the initial condition $\zeta$ is non-zero only at the origin,
Theorem~\ref{thm:shape1} is a straightforward consequence of
Theorem~\ref{thm:perturbed_subadditivity}: Using the arguments of
\cite[Lemma on p.~674]{Hammersley:AOP2} (whose assumptions directly follow
  from Theorem~\ref{thm:perturbed_subadditivity} and
  Corollary~\ref{cor:hit_moments}),
there exists $\gamma \in (0,\infty)$ such that
\begin{equation*}
  \lim_{k\to\infty} \frac{\tau_\hit(m2^k)}{m 2^k}
  = \gamma, \qquad \bar{\P}_\zeta\text{-a.s.}
\end{equation*}
for all $m\in \N$. Together with the fact that the sequence
$x\mapsto \tau_\hit(x)$ is monotone by definition, this implies that (as
  explained in \cite[Remark 2, p.~675]{Hammersley:AOP2}),
\begin{equation*}
  \lim_{x\to\infty} \frac{\tau_\hit(x)}{x}
  = \gamma, \qquad \bar{\P}_\zeta\text{-a.s.}
\end{equation*}
This proves the claim of the theorem for this initial condition, with
$v = 1/\gamma $. Note however, that $\gamma $ and thus also $v$ might depend
on the value of $\zeta(0)$.

It remains to show that $\gamma$ is independent of $\zeta(0)$ (if the initial
  condition contains only one particle at $0$) and that the theorem holds for
initial conditions $\zeta$ that are non-zero at the origin and may contain
arbitrarily many particles on the negative half-line. The argument is the
same for both claims. We only sketch it, as it resembles those used in
Section~\ref{sec:mainproof}.

Let $\zeta'$ be any fixed configuration that is non-zero
only at the origin and let $\zeta$ be any configuration satisfying the
assumptions of the theorem. We claim that for every $\varepsilon >0$ there is a
coupling of $\bar \P_\zeta $ and $\bar \P_{\zeta'}$ such that, with
probability at least $1-\varepsilon $ in the coupling, the difference of the
maxima of the two processes remains eventually constant. Since the theorem
holds under $\bar \P_{\zeta'}$ by the above arguments, the existence of such
a coupling then implies that
\begin{equation*}
  \bar \P_\zeta\Big(\lim_{x\to\infty} \frac{\tau_\hit(x)}{x} = \gamma\Big)
  \ge 1-\varepsilon.
\end{equation*}
Since $\varepsilon$ is arbitrary, this implies the claim of the theorem.

To construct the coupling, we first run the two processes independently.
Using the same arguments as in Lemmas~\ref{lem:neverwalkalone}
and~\ref{lem:blocks_somewhere}, with probability at least $1-\varepsilon/2$,
both processes produce many well-started blocks eventually. After this, we
can use (a variant of) the coupling from Proposition~\ref{prop:couplefronts}
and Lemma~\ref{lem:shiftedconditioned} to run the two processes together. Note
that here we do not require the maxima to be eventually ordered as
previously, but to eventually have a constant difference. The only assumption
of the initial condition needed to prove this is to have sufficiently many
well-started blocks, so that the probability that the coupling succeeds is at
least $1-\varepsilon /2$. \hfill\qed

\section{Proof of Theorem~\ref{thm:real_shape}}
\label{sec:proof_shape}

Theorem~\ref{thm:shape1} implies that $\{ x : \eta_n(x) \neq 0\}$ is
contained in $(-\infty, v(1+\varepsilon) n]$ eventually almost surely. Since
we assume that the dynamics is invariant under reflection around the origin,
we analogously have
$\{ x : \eta_n(x) \neq 0\} \subset [-v(1+\varepsilon) n, \infty)$ for all $n$
large enough and part~(a) follows.

The broad idea for part~(b) is that at time $n-\sqrt{n}$, say, there are many
well-started blocks in $[-(1-\varepsilon/2)vn, (1-\varepsilon/2)vn]$,
e.g.~by repeated applications of Proposition~\ref{prop:hitandblock}, and the
cluster that grows from them over the remaining time $\sqrt{n}$ has only
holes of size $\leq C \log n$. Here is a more detailed argument:

Let
\begin{equation}
  \tau_{\mathrm{dom}}(x)
  \coloneq \inf\big\{ t \in \L_\ttime :
    \ind(\bbracket{x} \in W(\eta_u)) \ge \tilde{Z}^\nu_u(\bbracket{x})
  \text{ for all } \L_\ttime \ni u \ge t \big\}
\end{equation}
be the time (on the grid $\L_\ttime$) when the property that the block
centred at $\bbracket{x}$ is well-started begins to dominate the stationary
percolation cluster defined above \eqref{eq:p_nu}. We claim that for any
$\zeta \not\equiv 0$, we have
\begin{align}
  \label{eq:hitthendominate}
  \P_\zeta(\tau_\hit(x) < \infty,
    \tau_{\mathrm{dom}}(x) > \bbfloor{ \tau_\hit(x)}  + C' \sqrt{x})
  \le C e^{-c x^{1/4}}.
\end{align}
Indeed, by Proposition~\ref{prop:hitandblock} and
Remark~\ref{rem:hitandblock} (with $\rho = x^{1/4}$), we have
\begin{align*}
  & \P_\zeta\big(\tau_\hit(x) < \infty,
    \exists\, z \in \L_\sspace, \abs{z-x} \leq L_\bl x^{1/2} :
    \abs[\big]{W(\eta_{\bbfloor{ \tau_\hit(x)} }) \cap B^\bl_{x^{1/4}}(z)}
    \geq p_\nu V_{x^{1/4}}/2 \big)
  \\ & \geq 1 - \Cr{c:liverbig}(e^{-\Cr{c:liversmall} x^{1/2}}
    + e^{-\Cr{c:liversmall} x^{1/4} + 2\log x}).
\end{align*}
Hence, with high probability, there are at time
$\bbfloor{ \tau_\hit(x)} $ at least on the order of $x^{1/4}$ many
well-started blocks within distance $x^{1/2}$ from $x$. By the percolation
comparison from Assumption~\ref{ass:domination} and
Lemma~\ref{lem:percolation}, the probability that the cluster started from
these blocks survives, grows at some linear speed and couples to the
stationary cluster is bounded below by $1 - C e^{-c x^{1/4}}$. This proves
\eqref{eq:hitthendominate}.

Since $\bar{\P}_\zeta(\tau_\hit(x) < \infty) = 1$ for all $x \in \Z$ (and
  $\P_\zeta(\tau_\ext = \infty) > 0$), we obtain that $\bar{\P}_\zeta$-a.s.
\begin{equation}
  \tau_{\mathrm{dom}}(x) \le \bbfloor{ \tau_\hit(x)}
  + \bbceil{ C' \sqrt{x}} \quad
  \text{for all large enough } x
\end{equation}
(and by the assumed symmetry also for all $x \in \Z$ which are sufficiently
  negative). Furthermore, by Theorem~\ref{thm:shape1}, we have
$\bar{\P}_\zeta$-a.s.
\begin{equation}
  \tau_\hit(x) \leq \frac{1+\varepsilon/2}{v} \abs{x} \quad
  \text{for all $\abs{x}$ large enough}.
\end{equation}
Thus, we have $\bar{\P}_\zeta$-a.s.\ for all $t \in \L_\ttime$ large enough
(using also that $\tau_{\mathrm{dom}}(x) < \infty$ $\bar{\P}_\zeta$-a.s.~for
  all $x$)
\begin{equation}
  \label{eq:taudom<=t}
  \tau_{\mathrm{dom}}(x)
  \le t \text{ for all } x \in [- (1-\varepsilon)v t,(1-\varepsilon)v t].
\end{equation}

Furthermore,
\begin{equation}
  \begin{split}
    & \P\big(\exists \, x \in [- v t, vt] \, :
      \, \tilde{Z}^\nu_t(\bbracket{x} + i L_\bl) = 0
      \text{ for }i=0,1,\dots, C\log t\big)
    \\ & \leq 2vt\, \P\big(\tilde{Z}^\nu_0(i L_\bl) = 0
      \text{ for }i=0,1,\dots, C\log t\big)
    \leq 2 \tilde{C} v t\, e^{-\tilde{\gamma} C \log t/2}
  \end{split}
\end{equation}
by Lemma~\ref{lem:percolation}\eqref{perc:large_dev}, which is summable in
$t \in \L_\ttime$ for $C > 4/\tilde{\gamma}$.

Combining this with \eqref{eq:taudom<=t} shows (b) along
$n = t \in \L_\ttime$. Furthermore, since $\bbracket x \in W(\eta_n)$ with
$n \in \L_\ttime$, $n \geq T_\bl$ implies in particular that
\begin{equation*}
  \{ y : \eta_{n-m}(y) \neq 0 \} \cap B_{m R + K L_\bl}(x)
  \neq \emptyset \quad
  \text{for } m=0,1,2,\dots,T_\bl-1,
\end{equation*}
we see that Claim~(b) holds in fact for all $n\in \N$ large enough.
\hfill\qed

\section{Application to branching annihilating random walk}
\label{sec:barw}

In this section we apply the abstract theory developed in the first part of
the paper to a specific model, the branching annihilating random walk (BARW)
that we studied previously in \cite{BCC24}. We will show that this model
satisfies Assumptions~\ref{ass:flow}--\ref{ass:coupling}, meaning that the
population, conditioned on survival, spreads with linear speed.

We recall the definition of the model from \cite{BCC24}. The state space is
$E=\{0,1\}$, where $0$ represents an empty site, and $1$ a site occupied by a
particle. The model has two parameters, $R\in \mathbb N$ (the dispersal
  range) and $\mu> 0$ (the mean number of offspring before annihilation). To
define the dynamics, let
\begin{equation}
  \label{eqn:phi}
  \varphi_\mu(w) \coloneq \mu w e^{-\mu w}, \quad w \in [0,1],
\end{equation}
and for $\eta\in E^\Z$ and $x\in \Z$, let
\begin{equation}
  \label{eqn:density}
  \delta_R(x, \eta) \coloneq \frac{1}{V_R} \sum_{y \in B_R(x)} \eta(y)
\end{equation}
be the density of $\eta$ in $B_R(x)$. Given $\eta_n$, the configuration
$\eta_{n+1}$ is defined for every $x \in \Z$ by
\begin{equation}
  \label{eqn:pca}
  \eta_{n+1}(x)
  = \begin{cases}
    1 \qquad \text{with probability } \varphi_\mu( \delta_R(x, \eta_n))
    \\ 0 \qquad \text{otherwise,}
  \end{cases}
\end{equation}
with $( \eta_{n+1}(x):x\in \Z)$ being conditionally independent given $\eta_n$.
This can be realised by setting
\begin{equation}
  \label{eqn:barwass1}
  \eta_{n+1}(x) = \ind_{U(x,n+1) \le \varphi_\mu( \delta_R(x, \eta_n))},
\end{equation}
where $\{ U(y,j): y \in \Z, j \in \N\}$ are i.i.d.~uniform, as in
Assumption~\ref{ass:flow}. From now on, we use \eqref{eqn:barwass1} as the
definition of the process.

\label{page:nameBARWexplained}
To explain the name `branching annihilating random walk', note that the model
can alternatively be built by iteratively applying the following algorithm, as
explained in \cite{BCC24}:
\begin{enumerate}[1.]
  \item (branching) At every time step, all particles independently branch, each
  of them produces a Poisson$(\mu )$-distributed number of children.

  \item (random walk) For a parent particle located at $x$, its children
  are placed independently to a uniformly chosen site in
  $B_R(x)=[x-R, x+R]\cap \Z$.

  \item (annihilation)
  Whenever at least two offspring particles (non-necessary with the same parent)
  land on the same site, all particles on that site are killed immediately.
  Surviving particles make up the population in the next time step.
\end{enumerate}

This alternative description is a model for a population evolving in demes,
where annihilation represents a very strong form of local competition among
individuals. It obviously makes the model non-monotone. We refer
to~\cite{BCC24} for a broader discussion on the applicability of the model.
On the other hand~\eqref{eqn:pca} can also be viewed as a particular
probabilistic cellular automaton.

Note also that
\begin{equation}
  \label{eqn:celauto}
  \E[\delta_R(x,\eta_{n+1}) \mid \eta_n]
  = \frac{1}{V_R} \sum_{y \in B_R(x)} \varphi_\mu( \delta_R(y,\eta_n))
\end{equation}
and thus, when $R$ is large enough for the law of large
numbers to take effect,
\begin{equation}
  \label{eqn:celautoapr}
  \delta_R(x,\eta_{n+1}) \approx \varphi_\mu (\delta_R(x,\eta_n)).
\end{equation}

It follows naturally that the solutions to the fixpoint equation
$w=\varphi_\mu(w)$ behave as fixpoints for the evolution of the (stochastic)
density. It is easy to see that (cf.~\cite[Section~1.1]{BCC24}):

\begin{enumerate}[(a)]
  \item For every $\mu>1$, the fixpoint equation $\varphi_\mu(w)=w$ has two
  solutions, $\{0, \theta_\mu\}$, where $\theta_\mu \coloneq \mu^{-1} \log \mu$.

  \item  The fixpoint $0$ is repulsive for every $\mu >1$, whereas
  $\theta_\mu$ is attractive if and only if $\mu \in(1,e^2)$.
\end{enumerate}

In \cite{BCC24} we showed that if $\mu >1$ and $R$ is large enough (depending
  on $\mu $) the population survives locally with positive probability (that
  is, every $x \in \Z$ is occupied infinitely often). In addition, if
$\mu \in (1,e^2)$ and $R$ is large enough, the BARW conditioned on survival
converges in distribution to its unique non-trivial extremal invariant
distribution. Intuitively, this can be understood from the above properties
of the function $\varphi_\mu $ and \eqref{eqn:celautoapr}, since, on survival,
the density of the population is attracted to $\theta_\mu$.

The main result of this section is that in the same range of values of $\mu$
for which there is complete convergence (and large $R$), the population
spreads linearly.

\begin{theorem}
  \label{thm:BARW_speed}
  For every $\mu \in (1,e^2)$ there is $R_0=R_0(\mu)$ such that for all
  $R \geq R_0$ there is a coarse-graining so that
  Assumptions~\ref{ass:flow}--\ref{ass:coupling} hold. As a consequence, the
  BARW with parameters $(\mu, R)$ has a linear spreading speed in the sense
  of Theorems~\ref{thm:shape1} and~\ref{thm:real_shape}.
\end{theorem}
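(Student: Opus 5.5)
The plan is to verify Assumptions~\ref{ass:flow}--\ref{ass:coupling} one by one for the BARW. The linear spreading statement then follows directly from Theorems~\ref{thm:shape1} and~\ref{thm:real_shape}; reflection invariance of $F$ is evident from \eqref{eqn:barwass1}, since $\delta_R$ is symmetric.

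Assumption~\ref{ass:flow} is immediate from \eqref{eqn:barwass1}. The range is $R$, translation invariance is clear, and $\varphi_\mu(0)=0$ makes $0$ a local fixed point. The substance lies in Assumptions~\ref{ass:coarse} and~\ref{ass:domination}. Here I would import the coarse-graining constructed in \cite{BCC24} for the complete convergence result.

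\begin{itemize}
\item \textbf{Spatial scale and $G^\eta$.} Take $L_\bl$ a large multiple of $R$ and $T_\bl$ a suitable constant. Let $G^\eta$ consist of configurations on $B_{KL_\bl}(0)$ whose local densities $\delta_R(y,\cdot)$ lie within $\epsilon$ of $\theta_\mu$ on a suitable subinterval.
\item \textbf{Part (a).} Holds with $\varepsilon_2=\min\varphi_\mu$ over densities near $\theta_\mu$.
\item \textbf{Part (b).} Follows from attractivity of $\theta_\mu$ for $\mu\in(1,e^2)$, via \eqref{eqn:celautoapr} and a law of large numbers for large $R$. This also requires control of the boundary: the good region must expand by $L_\bl$ in $T_\bl$ steps at each side. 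The expansion can be enforced by requiring in $G^U$ that the densities spread, since $0$ is repulsive.
\item \textbf{Parts (c) and (d).} These are the genuinely non-monotone ingredients. I would obtain them from the contraction mechanism of \cite{BCC24}: two configurations with densities near $\theta_\mu$, driven by the same $U$, have their density discrepancy contracted by roughly $|\varphi_\mu'(\theta_\mu)|<1$ per step. After $T_\bl$ steps the discrepancy is so small that the two thresholds in \eqref{eqn:barwass1} fall on the same side of $U(x,n)$ at every site of the relevant window. Define $G^U$ as the event that all these LLN estimates hold and that no $U(x,n)$ falls in the tiny window between competing thresholds.
\item \textbf{Assumption~\ref{ass:domination}.} Remark~\ref{rem:LSS} and \cite{liggett1997domination} reduce this to showing $\P(G^U)\to1$ as $R\to\infty$, using Gaussian or Hoeffding concentration with a union bound over the block. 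The block has size polynomial in $R$, while the failure probability is $e^{-cR}$ per site.
\end{itemize}

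Assumptions~\ref{ass:irreducibility} and~\ref{ass:coupling} use finiteness of $E$, as Remark~\ref{rmk:Ass5_Efinite} indicates.

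\begin{itemize}
\item \textbf{Assumption~\ref{ass:irreducibility}.} From a particle in $B_{L_\bl}(0)$, there is a fixed finite sequence of $t_4$ steps with each site decision having probability in $[\min(\varphi_\mu(1/V_R),\dots),1)$. This sequence builds a prescribed configuration in $G^\eta_\loc(0)$, giving $\varepsilon_4>0$ depending only on $R,\mu$, not on $\zeta$. The key point is that sites outside $B_{(t_4)R+KL_\bl}(0)$ do not matter, and the inside choices use only probabilities bounded away from $0$ and $1$ once density is positive. Sites with zero density are automatically $0$, which helps.
\item \textbf{Assumption~\ref{ass:coupling}.} Run both copies independently and demand that both produce the same prescribed configuration on $[-KL_\bl-t_5R,\infty)$ at time $t_5$ (zeros to the right). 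This is a finite set of site events, so its probability is at least $\varepsilon_4^2$-like, uniformly.
\end{itemize}

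One subtlety in both assumptions is making the target reachable uniformly from an arbitrary left part. I would choose the forcing so that at each step the sites in the relevant window are set to values each requiring a probability in $[c,1-c]$. This works because on sites where the density is positive, $\varphi_\mu\in(0,1)$ with bounds depending only on $R$.

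The main obstacle is parts (c)--(d) of Assumption~\ref{ass:coarse}: the exact agreement, rather than density closeness, needed to produce local coupling under the same $U$. I would first try to extract it from the coupling lemmas of \cite{BCC24}, which I expect handle exactly this. The requirement $\mu<e^2$ enters only there.
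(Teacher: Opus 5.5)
Your handling of Assumptions~\ref{ass:flow}, \ref{ass:irreducibility}, \ref{ass:coupling} and of reflection invariance is essentially the paper's. The central step, however, does not work as you set it up: $G^U$ must contain parts (c),(d) of Assumption~\ref{ass:coarse} and still satisfy $\P(Y(0,0)=1)\to 1$.

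You take $T_\bl$ constant, which by part (b) forces $L_\bl=O(R)$, since goodness must advance by $L_\bl$ at speed at most $R$. At that scale exact agreement is not produced. Densities are quantized in units of $1/V_R$ and obey a law of large numbers, so the discrepancy does not become ``tiny'' in a bounded number of steps. Concretely, take $\mu\neq e$, so that $\varphi_\mu'(\theta_\mu)=1-\log\mu\neq 0$. Two well-started configurations with densities near $\theta_\mu+\epsilon$ and $\theta_\mu-\epsilon$, driven by the same $U$, have disagreement density $\approx 2\epsilon\abs{\varphi_\mu'(\theta_\mu)}^n$ after $n$ steps. At time $T_\bl$ there are then order $R$ disagreeing sites in $B_{3L_\bl}(z)$, so $\P(G^U)\to 0$.

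For the same reason, your ``$e^{-cR}$ per site'' holds for the density estimates but not for the agreement events. The correct per-site bound is the first-moment estimate
\[
\E\Big[\Big(\sup_{\zeta\in G^\eta_\loc(0)}F_n(\zeta,U)(x)-\inf_{\zeta\in G^\eta_\loc(0)}F_n(\zeta,U)(x)\Big)\ind_{G_{n-1}}\Big]\le\kappa^h .
\]
It comes from iterating, on $G_{n-1}$, the bound $\E[\sup-\inf\mid\mathcal U_{n-1}]\le\kappa V_R^{-1}\sum_{y\in B_R(x)}(\sup-\inf)$ down a cone of slope $R$ and height $h$ lying where all densities are near $\theta_\mu$. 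This is only polynomially small in $R$, even when $h\asymp\log R$.

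The missing idea is the resulting scale trade-off. The number $h$ of usable contraction steps is capped by geometry. At early times, (d) can only come from finite range: configurations equal on $B_{2L_\bl}$ agree on $B_{2L_\bl-kR}$ at time $k$. Contraction must take over before this triangle shrinks below $B_{L_\bl/2}$, which gives $h\approx L_\bl/(2(\lceil sR\rceil+R))$. The union bound runs over $\abs{A_{h_1,T_\bl}}=O(L_\bl T_\bl)$ sites, so one needs $h\gtrsim\log R/\abs{\log\kappa}$. That means $L_\bl\asymp R\log R/\abs{\log\kappa}$ and $T_\bl\asymp\log R$. This is why the paper sets $L_\bl=16\lceil R\log R\rceil/\abs{\log\kappa}$, which yields $h\ge 4\log R/\abs{\log\kappa}-2$ and $\abs{A_{h_1,T_\bl}}\kappa^h\to 0$.

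Nor can this be imported from \cite{BCC24}. There the good noise depends on the specific bottom configuration, and agreement is only required at the top layer. Here $G^U$ must serve all $\zeta\in G^\eta_\loc(0)$ simultaneously and give (d) at every intermediate time. A union bound over the $2^{\Theta(KL_\bl)}$ bottom configurations is useless once $L_\bl\gg R$. The paper gets this uniformity in two ways:
\begin{itemize}
\item For agreement, it uses the $\sup/\inf$ formulation above. This works because the indicator in \eqref{eqn:barwass1} is monotone in the threshold, so the supremum over $\zeta$ of the indicators is the indicator of the supremum of the thresholds.
\item For the density bounds, it sandwiches $\delta_R(\cdot,F_1(\zeta,U))$ between $\zeta$-independent sums with thresholds $\varphi^\pm$.
\end{itemize}
See Lemmas~\ref{lem:disagreement} and~\ref{lem:propagation_xi_uniform}.
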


In order to show Theorem~\ref{thm:BARW_speed} we need to construct a suitable
coarse-graining, that is, we need to fix the constants
$L_\bl, T_\bl, K \in \mathbb N$, and the sets
$G^\eta \subset E^{B_{KL_\bl}(0)}$, $G^U \subset [0,1]^{\Block(0,0)}$ as
before Assumption~\ref{ass:coarse}, and then show that the five assumptions
hold for this choice. Let us now comment on the individual assumptions:

\begin{itemize}[leftmargin=*]
  \item Assumption~\ref{ass:flow} follows easily from \eqref{eqn:barwass1}:
  the function $F$ required by this assumption is given by
  \begin{equation}
    \label{eqn:F_BARW}
    F(\eta, U)(x)
    =\ind_{ \{ U(x) \le \varphi_\mu(\delta_R(x,\eta)) \}},
    \qquad x \in \Z, \ \eta \in E^\Z, \ U\in [0,1]^\Z.
  \end{equation}
  This function is clearly translation invariant and has all other required
  properties. In particular, $R$ in the assumption (the range of the
    interaction) agrees with the parameter $R$ of the model.

  \item The proof of Assumption~\ref{ass:coarse} is technical and requires to
  fix suitable sets $G^\eta $, $G^U$ of well-started configurations and good
  driving noise. The construction is presented in Section~\ref{sec:blocks}
  and relies on the coarse-graining introduced in~\cite{BCC24} in order to
  show complete convergence, and on its refinement
  from~\cite{Oswald2024ancestral}. Compared to those references,
  Assumption~\ref{ass:coarse} requires a stronger control on the evolution of
  the process (e.g., it requires to control $\eta $ throughout the block, not
    only at its bottom and top layer). We therefore need to revisit the
  previously introduced techniques.

  \item As explained in Remark~\ref{rem:LSS}, for
  Assumption~\ref{ass:domination} it is sufficient to show that the
  probability of $U\in G_\loc^U(0,0)$ can be made large enough so that
  \cite{liggett1997domination} applies. We prove this in Section~\ref{sec:LSS}.

  \item Assumptions~\ref{ass:irreducibility} and~\ref{ass:coupling}
  require certain events happening in a finite time with positive
  probability, uniformly over certain sets of initial conditions.
  Since the state space $E = \{ 0, 1\}$ is finite, the observation from
  Remark~\ref{rmk:Ass5_Efinite} applies. With it, the proofs
  of these assumptions are straightforward, and are given
  in Section~\ref{sec:BARWass4and5}.
\end{itemize}

\subsection{Comparison density profiles}
\label{sec:tools}

The coarse-graining construction required by Assumption~\ref{ass:coarse}
builds on a number of comparison arguments giving control over the evolution
of the density of the process. In order to develop them, we recall here
several definitions and results from~\cite{BCC24}.

We start by introducing the so-called \emph{comparison density profiles},
which are deterministic functions prescribing a certain behaviour of the
density of $\eta$. These will be central for the definition of suitable sets
$G^U$ and $G^\eta$ in Sections~\ref{sec:blocks} and~\ref{sec:LSS}.

\begin{definition}
  \label{def:density_profiles}
  For given $\varepsilon, \delta  >0$ and $k_0 \in \N$, \emph{comparison
    density profiles} are deterministic functions
  $\xi_k^{-}, \xi_k^{+} : \Z \to [0,\infty)$, $k=0,1,\dots,k_0$, satisfying:
  \begin{enumerate}[(i)]
    \item \label{profile:i}
    For every $k=0,\dots,k_0$ and all $x \in \Z$,
    $(1+\delta )\xi_k^{-}(x) \le (1-\delta ) \xi_k^{+}(x)$.

    \item \label{profile:ii}
    For every $k=0,\dots,k_0$,
    $\Supp(\xi_k^-) \coloneq \{x\in \mathbb Z: \xi_k^-(x) > 0\}$ is finite, and
    $\xi_k^-(x)\ge \varepsilon $ for
    every $x\in \Supp(\xi_k^-)$.

    \item \label{profile:iii}
    For every $k = 0,\dots,k_0-1$, and $x\in \Supp(\xi_{k+1}^-)$ it holds
    that if $\zeta : B_R(x) \to \mathbb R$ satisfies $\zeta (y) \in
    [\xi_k^{-}(y), \xi_k^{+}(y)]$ for all $y\in B_R(x)$, then
    \begin{equation}
      \label{eqn:xi.comp.bd1}
      (1+\delta) \xi_{k+1}^{-}(x)
      \le V_R^{-1} \sum_{y \in B_R(x)} \varphi_\mu ( \zeta(y))
      \le (1-\delta) \xi_{k+1}^{+}(x).
    \end{equation}
  \end{enumerate}
\end{definition}

Recalling \eqref{eqn:celauto}, note that the important
property~\ref{profile:iii} of the above definition implies that if the
density of $\eta_n$ is controlled by $\xi_{k}^{\pm}$, then the
\emph{expectation} of $\eta_{k+1}$ is controlled by
$(1\mp \delta) \xi_{k+1}^{\pm}$. The following lemma shows that, with high
probability when $R$ is large, $\xi_{k+1}^{\pm}$ also provide good bounds for
the density of the true dynamics.

\begin{lemma}[\cite{BCC24}, Lemma~2.3]
  \label{lem:propagation_xi}
  Let $\xi^{\pm}_{k}$ be \emph{comparison density profiles}. For
  $k \in \{ 0,\dots,k_0-1 \}$, $x\in \Supp(\xi_{k+1}^-)$, and
  $\zeta \in E^\Z$ define the event
  \begin{equation*}
    A_k(x,\zeta) \coloneq
    \Big\{ \delta_R(y,\zeta) \in \big[\xi_k^{-}(y), \xi_k^{+}(y)\big]
      \quad \text{for all } y \in B_R(x) \Big\}.
  \end{equation*}
  Then
  \begin{equation*}
    \P\Big( \xi_{k+1}^{-}(x) \le \delta_R(x, F(\zeta,U)) \le \xi_{k+1}^{+}(x)
      \Bigm| A_k(x,\zeta) \Big)
    \ge 1-C \exp(- c V_R),
  \end{equation*}
  where $c,C>0$ are constants dependent on $ \varepsilon, \delta$ and $\mu$
  but not $R$.
\end{lemma}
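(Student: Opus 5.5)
This is a concentration estimate for a sum of conditionally independent Bernoulli variables, and I would prove it with a Chernoff--Hoeffding bound. Fix $k$, $x\in\Supp(\xi_{k+1}^-)$ and $\zeta$ with $A_k(x,\zeta)$ in force. The event $A_k(x,\zeta)$ depends only on $\zeta$, which is deterministic here; the randomness comes only from the single row $U$. So "conditioning on $A_k$" just means we may use its constraints on $\zeta$. By \eqref{eqn:F_BARW}, the variables $F(\zeta,U)(y)$, $y\in B_R(x)$, are independent Bernoulli with parameters $\varphi_\mu(\delta_R(y,\zeta))$. Hence
\begin{equation*}
  \delta_R(x,F(\zeta,U)) = V_R^{-1}\sum_{y\in B_R(x)} \ind_{U(y)\le \varphi_\mu(\delta_R(y,\zeta))}
\end{equation*}
is an average of $V_R$ independent indicators. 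Its mean is $m\coloneq V_R^{-1}\sum_{y\in B_R(x)}\varphi_\mu(\delta_R(y,\zeta))$.

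The first step is to locate the mean. On $A_k(x,\zeta)$ the function $y\mapsto\delta_R(y,\zeta)$ on $B_R(x)$ lies between $\xi_k^-$ and $\xi_k^+$. Applying property~\ref{profile:iii} of Definition~\ref{def:density_profiles} with this function as the test function (it takes real values, as required) gives
\begin{equation*}
  (1+\delta)\xi_{k+1}^-(x)\le m\le(1-\delta)\xi_{k+1}^+(x).
\end{equation*}

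The second step is concentration. Hoeffding's inequality for averages of $V_R$ independent $[0,1]$-valued variables gives
\begin{equation*}
  \P(\abs{\delta_R(x,F(\zeta,U))-m}\ge s)\le 2e^{-2V_R s^2}.
\end{equation*}
Take $s=\delta\,\xi_{k+1}^-(x)$. Since $x\in\Supp(\xi_{k+1}^-)$, property~\ref{profile:ii} gives $\xi_{k+1}^-(x)\ge\varepsilon$, so $s\ge\delta\varepsilon$.
\begin{itemize}
  \item For the lower tail, $m-s\ge\xi_{k+1}^-(x)$ by the first step.
  \item For the upper tail, property~\ref{profile:i} gives $\xi_{k+1}^+(x)\ge\frac{1+\delta}{1-\delta}\xi_{k+1}^-(x)\ge\xi_{k+1}^-(x)$. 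Hence $\delta\xi_{k+1}^+(x)\ge s$, and so $m+s\le(1-\delta)\xi_{k+1}^+(x)+\delta\xi_{k+1}^+(x)=\xi_{k+1}^+(x)$.
\end{itemize}
Therefore the complement of the event in the statement has probability at most $2\exp(-2\delta^2\varepsilon^2V_R)$. This is the claim with $C=2$ and $c=2\delta^2\varepsilon^2$, constants independent of $R$ (and of $\mu$, which enters only through the profiles).

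There is no real obstacle. The only points needing care are using the positivity margin $\xi_{k+1}^-\ge\varepsilon$ to get a deviation threshold bounded below uniformly in $R$, and using property~\ref{profile:i} so that the same threshold works for the upper bound.
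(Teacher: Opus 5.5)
Your proof is correct and follows the same route as the argument in \cite{BCC24}: you locate the conditional mean via property~(iii) of Definition~\ref{def:density_profiles}, then apply exponential concentration to the average of $V_R$ independent Bernoulli indicators, using $\xi_{k+1}^-(x)\ge\varepsilon$ to get a deviation threshold $\delta\varepsilon$ that does not depend on $R$. The only difference is that you use Hoeffding's inequality where \cite{BCC24} uses a Bernstein inequality (as noted in Section~\ref{sec:outlook}), which makes no difference for the statement as posed.
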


We now define precisely one version of comparison density profiles
$\xi_k^\pm$ that we later use for the definition of good blocks. The exact
definitions of these profiles are not particularly important in the present
paper, and are included mostly for completeness. The crucial properties we
need here are the extension of their support and their shift properties,
see~\eqref{eqn:supp_xik} and Figure~\ref{fig:ellmaps} below.

We focus on the regime $\mu \in (1,e^2)$ where the fixpoint $\theta_\mu$ is
attractive. In this regime we can fix $\varepsilon_\bl >0$ small so that
$\varphi_\mu$ is a contraction on
$[\theta_\mu-\varepsilon_\bl, \theta_\mu+\varepsilon_\bl]$, and define
\begin{equation}
  \label{eqn:kappa}
  \kappa\coloneq\kappa(\mu,\varepsilon_\bl)
  \coloneq \sup \{ \abs{\varphi_\mu'(u)}
    : u \in [\theta_\mu-\varepsilon_\bl, \theta_\mu+\varepsilon_\bl] \} < 1
\end{equation}
to be the contraction coefficient on this interval. Further, by Lemma~4.1 of
\cite{BCC24}, we can fix two sequences $(\alpha_m)_{m \in \N}$ and
$(\beta_m)_{m \in \N}$ such that $\alpha_1$ is sufficiently small (we fix it
  later), $\beta_1 = \max \varphi_\mu = 1/e <1$, $(\alpha_m)_{m \in \N}$ is
increasing, $(\beta_m)_{m \in \N}$ is decreasing, both converge to
$\theta_\mu$, and satisfy
\begin{equation}
  \label{eqn:phi_iter}
  \varphi_\mu([\alpha_m, \beta_m]) \subseteq (\alpha_{m+1}, \beta_{m+1})
  \quad \text{for all } m \in \N.
\end{equation}
We also set
\begin{equation*}
  m_\bl \coloneq
  \min\{m \in \mathbb N:\beta_{m} - \alpha_{m} < \varepsilon_\bl\}.
\end{equation*}
As in \cite[Lemma~2.5]{BCC24}, we define function
\begin{equation}
  \label{eqn:fdef}
  f(x)\coloneq
  \min \big\{ (\varepsilon_0+x/ \ceil{wR} ) \ind_{x \ge 0}, 1\big\},
  \qquad x\in\mathbb R,
\end{equation}
where $w, \varepsilon_0>0$ are suitable parameters (see Lemma~\ref{lem:xi_cdp}).
This function will be used in the definition of comparison density profiles at
low densities.
We set
\begin{equation}
  \label{eqn:R_dens_xi}
  R_\dens \coloneq \frac{16}{\abs{ \log \kappa} }  \ceil{ R \log R}
  \qquad\text{and}\qquad
  R_\dens(k)= R_\dens+ k \ceil{ s R},\  k \in \N,
\end{equation}
where $s\in (0,1)$ is another parameter ($s$ is a proxy for the `wave speed'
  of these profiles).

The comparison density profiles that we use are then given by
\begin{equation}
  \label{eqn:xiplus}
  \xi^+_k(x) \coloneq
  \begin{cases}
    \beta_{m_\bl}  & \text{if } \abs{ x } \le R_\dens(k) \\
    \beta_{m_\bl -j+1} & \text{if } R_\dens(k) + (j-1) R
    < \abs{ x } \le R_\dens(k) + j R,
    \,1 \le j \le m_\bl \\
    1& \text{if } \abs{ x } > R_\dens(k) + m_\bl R,
  \end{cases}
\end{equation}
and
\begin{equation}
  \label{eqn:ximinus}
  \xi^-_k(x)
  \coloneq \begin{cases}
    \alpha_{m_\bl}
    & \text{if } \abs{ x } \le R_\dens(k)
    \\ \alpha_{m_\bl -j+1}
    & \text{if } R_\dens(k) + (j-1) R < \abs{ x }
    \le R_\dens(k) + j R, \,1 \le j \le m_\bl
    \\ \alpha_1 f_k(x)
    & \text{if } \abs{ x } > R_\dens(k) + m_\bl R,
  \end{cases}
\end{equation}
where
\begin{equation*}
  f_k(x) \coloneq
  f\big( R_\dens(k) + m_\bl R +  \ceil{ wR}  - \abs{x} \big).
\end{equation*}
See Figure~\ref{fig:ellmaps} for an illustration. Note that
\begin{equation}
  \label{eqn:supp_xik}
  \Supp(\xi_k^-)
  = B_{ R_\dens + k\ceil{sR} + m_\bl R + \ceil{wR} }(0)
  = B_{ R_\dens(k) +m_\bl R+ \ceil{wR} }(0)
\end{equation}
and that $\xi_{k}^\pm$ is exactly $\xi_0^\pm$ shifted outwards by $k\ceil{ sR}$.

\begin{figure}[t]
  \centering
  \begin{tikzpicture}[scale = 0.8]
  % axis
  \draw[->, thick] (-8,0)--(8,0) node[right]{\small$\Z$};
  \draw[->, thick] (-8,0)--(-8,6);

  % labels on x axis
  \node[below] at (0,0){\small $0$};

  % bars on x axis
  \draw[] (0,0.1)--(0,-0.1);
  \draw[] (2,0.1)--(2,-0.1); % R_dens
  \draw[] (-2,0.1)--(-2,-0.1);
  \draw[] (6.5,0.1)--(6.5,-0.1);  % R_dens+mR
  \draw[] (-6.5,0.1)--(-6.5,-0.1);
  \draw[] (7.5,0.1)--(7.5,-0.1);  % R_dens+mR+wR
  \draw[] (-7.5,0.1)--(-7.5,-0.1);
  \foreach \i in {0,...,9} {		% R steps right
    \pgfmathsetmacro{\x}{2+0.5*\i}
    \draw (\x,0.1) -- (\x,-0.1);
  }
  \foreach \i in {0,...,9} {		% R steps left
    \pgfmathsetmacro{\x}{-2-0.5*\i}
    \draw (\x,0.1) -- (\x,-0.1);
  }

  % vertical lines
  \draw[dotted,thick, black](2,0)--(2,5.25); % R_dens
  \draw[dotted,thick, black](-2,0)--(-2,5.25);
  \draw[dotted,thick, black](6.5,0)--(6.5,5.25); % R_dens + mR
  \draw[dotted,thick, black](-6.5,0)--(-6.5,5.25);
  \draw[dotted,thick, black](7.5,0)--(7.5,5.25); % R_dens+mR+wR
  \draw[dotted,thick, black](-7.5,0)--(-7.5,5.25);

  % draw left alpha staircase
  % draw tilde xi
  \draw[blue, thick] (-7.5,0)--(-6.5,0.5);
  %\draw[dotted, thick, blue] (-6.5,0)--(-6.5,0.5);
  \foreach \i in {0,...,7} {
    % starting point of current step
    \pgfmathsetmacro{\x}{-6.5 + 0.5*\i}
    \pgfmathsetmacro{\y}{0.5 + 0.25*\i}
    % horizontal segment
    \draw[blue, thick] (\x,\y) -- ++(0.5,0);
    % vertical dotted segment
    % \draw[dotted, thick, blue] (\x+0.5,\y) -- ++(0,0.25);
  }

  % draw right alpha staircase
  % draw tilde xi
  \draw[blue, thick] (7.5,0)--(6.5,0.5);
  %\draw[dotted, thick, blue] (6.5,0)--(6.5,0.5);
  \foreach \i in {0,...,7} {
    % starting point of current step
    \pgfmathsetmacro{\x}{6 - 0.5*\i}
    \pgfmathsetmacro{\y}{0.5 + 0.25*\i}
    % horizontal segment
    \draw[blue, thick] (\x,\y) -- ++(0.5,0);
    % vertical dotted segment
    %\draw[dotted, thick, blue] (\x,\y) -- ++(0,0.25);
  }
  \draw[blue, thick] (-2.5,2.5)--(2.5,2.5);

  % draw left beta staircase
  % draw tilde xi
  \draw[blue, thick] (-7.5,5.25)--(-6.5,5.25);
  \draw[dotted, thick, blue] (-6.5,5)--(-6,5);
  \foreach \i in {0,...,7} {
    % starting point of current step
    \pgfmathsetmacro{\x}{-6.5 +0.5*\i}
    \pgfmathsetmacro{\y}{5-0.25*\i}
    % horizontal segment
    \draw[blue, thick] (\x,\y) -- ++(0.5,0);
    % vertical dotted segment
    %\draw[dotted, thick, blue] (\x+0.5,\y) -- ++(0,-0.25);
  }

  % draw right beta staircase
  % draw tilde xi
  \draw[blue, thick] (7.5,5.25)--(6.5,5.25);
  \draw[dotted, thick, blue] (6,5)--(6.5,5);
  \foreach \i in {0,...,7} {
    % starting point of current step
    \pgfmathsetmacro{\x}{6 - 0.5*\i}
    \pgfmathsetmacro{\y}{5 -0.25*\i}
    % horizontal segment
    \draw[blue, thick] (\x,\y) -- ++(0.5,0);
    % vertical dotted segment
    %\draw[dotted, thick, blue] (\x,\y) -- ++(0,-0.25);
  }
  \draw[blue, thick] (-2.5,3)--(2.5,3);

  % arrows on positive integers
  \draw[<->]  (0.01,-0.8)--(1.99,-0.8);
  \draw[<->]  (2.01,-0.8)--(6.49, -0.8);
  \draw[<->]  (6.51,-0.8)--(7.5, -0.8);
  \node[below] at (1,-0.8){\small $R_\dens$};
  \node[below] at (4,-0.8){\small $m_\bl R$};
  \node[below] at (7,-0.8){\small $\lceil wR \rceil$};

  % arrows on negative integers
  \draw[<->]  (-0.01,-0.8)--(-1.99,-0.8);
  \draw[<->]  (-2.01,-0.8)--(-6.49, -0.8);
  \draw[<->]  (-6.51,-0.8)--(-7.5, -0.8);
  \node[below] at (-1,-0.8){\small $R_\dens$};
  \node[below] at (-4,-0.8){\small $m_\bl R$};
  \node[below] at (-7,-0.8){\small $\lceil wR \rceil$};

  % labels on y axis
  \node[left] at (-8,5.25){\small$1$};
  \node[left] at (-8,2.8){\small$\theta_\mu$};
  \node[left] at (-8,0.5){\small$\alpha_1$};

  % bars on y axis
  \draw[] (-8.1,5.25)--(-7.9,5.25);
  \draw[] (-8.1,2.8)--(-7.9,2.8);
  \draw[] (-8.1,0.5)--(-7.9,0.5);

  % horizontal lines
  \draw[dotted,thick, red](-8,2.8)--(8,2.8); % theta_mu
  \draw[dotted,thick, black](-8,5.25)--(8,5.25); % 1
  \draw[dotted,thick, black](-8,0.5)--(6.5,0.5); % alpha_1
\end{tikzpicture}
  \caption{The density profiles $\xi_0^\pm$. For $k \ge 1$
    the profiles $\xi_{k}^\pm$ can be obtained by shifting
    $\xi_{0}^\pm$ outwards by $k\ceil{sR}$.}
  \label{fig:ellmaps}
\end{figure}

The following lemma states that the free parameters can indeed be chosen so
that these functions are density profiles.

\begin{lemma}[\cite{BCC24}, Lemma~4.2]
  \label{lem:xi_cdp}
  There exist $w \ge 2$, $\varepsilon_0 \in (0,1)$, $s \in (0,1)$,
  $\alpha_1 >0$ and $R_0 \in \N$ such that for every $R \ge R_0$ the maps
  $\xi_k^+, \xi_k^-$ defined in \eqref{eqn:xiplus}, \eqref{eqn:ximinus} are
  comparison density profiles in the sense of
  Definition~\ref{def:density_profiles}.
\end{lemma}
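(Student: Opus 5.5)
The plan is to verify the three properties of Definition~\ref{def:density_profiles} directly. The parameters are fixed in this order: first $\alpha_1$ small, then $w$ large and $s,\varepsilon_0$ small, then $\delta$ small, and finally $R_0$ large.

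Properties (i) and (ii) are bookkeeping.
\begin{itemize}[leftmargin=*]
\item For (i), $\alpha_m<\theta_\mu<\beta_m$ for all $m$, and only the finitely many indices $m\le m_\bl$ occur. So $\min_{m\le m_\bl}(\beta_m-\alpha_m)>0$, and a small $\delta$ gives $(1+\delta)\alpha_m\le(1-\delta)\beta_m$.
\item In the outer region $\xi^-_k\le\alpha_1<1/2$ while $\xi^+_k=1$, which is also fine for small $\delta$.
\item For (ii), the support is the finite ball in \eqref{eqn:supp_xik}. On it, $\xi^-_k\ge \alpha_1\varepsilon_0$, because $f\ge\varepsilon_0$ on $[0,\infty)$ and the $\alpha_m$ increase. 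So we take $\varepsilon=\alpha_1\varepsilon_0$.
\end{itemize}

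The substance is (iii). Since $\xi^\pm_{k+1}$ is $\xi^\pm_k$ shifted outwards by $\ceil{sR}<R$, it suffices to treat $k=0$. Fix $x$ with $\abs x$ in the $j$-th step of $\xi^\pm_1$, and let $y\in B_R(x)$. Then $\abs{y}\le R_\dens+\ceil{sR}+jR\le R_\dens+(j+1)R$, so $\xi_0^\pm(y)$ lies at step at most $j+1$. Hence $\zeta(y)\in[\alpha_{m_\bl-j},\beta_{m_\bl-j}]$, with the conventions $\beta_0=1$ and $\alpha_0$ replaced by the ramp.

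\emph{Upper bound.} Here $\varphi_\mu\le 1/e=\beta_1$ everywhere, which handles the outermost steps. Elsewhere \eqref{eqn:phi_iter} gives
\[
\varphi_\mu(\zeta(y))<\beta_{m_\bl-j+1}=\xi^+_1(x),
\]
with a strict margin. Uniformity over the finitely many indices turns this margin into a factor $(1-\delta)$.

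\emph{Lower bound in the plateau and staircase region.} The same inclusion \eqref{eqn:phi_iter} gives $\varphi_\mu(\zeta(y))>\alpha_{m_\bl-j+1}$ as long as $B_R(x)$ does not reach the ramp. On the plateau, $\varphi_\mu([\alpha_{m_\bl},\beta_{m_\bl}])\subset(\alpha_{m_\bl+1},\beta_{m_\bl+1})\subset(\alpha_{m_\bl},\beta_{m_\bl})$ by monotonicity of the sequences. If $B_R(x)$ does dip into the ramp, then $\zeta(y)\ge\alpha_1\varepsilon_0$ there, and $\varphi_\mu$ is increasing on $[0,1/\mu]$. It is then enough to check that $\varphi_\mu$ of the smallest admissible value stays above the required $\alpha$ level. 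This is where $\alpha_1$ and the step structure from \cite[Lemma~4.1]{BCC24} enter, and it should hold with room to spare.

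\emph{Lower bound on the ramp: the main obstacle.} Let $\abs x$ lie in the region $(R_\dens(1)+m_\bl R,\;R_\dens(1)+m_\bl R+\ceil{wR}]$. I would choose $\alpha_1$ so small that $\varphi_\mu(u)\ge\mu'u$ on $[0,\alpha_1]$ for some fixed $\mu'\in(1,\mu)$. It then suffices to show
\[
\mu' \cdot V_R^{-1}\sum_{y\in B_R(x)}\alpha_1 f_0(y)\;\ge\;(1+\delta)\,\alpha_1 f_1(x).
\]
On the linear part of the ramp, the average of $f_0$ over $B_R(x)$ equals $f_0(x)$. Since $f_1(x)=f_0(\abs{x}-\ceil{sR})$, the loss from the shift is at most $(sR+1)/\ceil{wR}\le 2s/w$. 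The ratio then becomes $\mu'(\varepsilon_0+u)/(\varepsilon_0+u+s/w)$, which exceeds $1+\delta$ once $s/w\ll(\mu'-1)\varepsilon_0$.

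Two boundary effects need separate care.
\begin{itemize}[leftmargin=*]
\item At the outer foot of the ramp, where $f$ jumps from $\varepsilon_0$ to $0$, only a fraction of $B_R(x)$ is occupied. Here I would use that at least about half of the ball lies inside the support of $f_0$. I would also take $w\ge2$ so that the ramp values inside dominate, and check that $\mu'\cdot\frac12(\varepsilon_0+\text{something of order }1/w)\ge(1+\delta)\varepsilon_0$. If this fails, I would take $\varepsilon_0$ small relative to $1/w$ and use the ramp slope.
\item At the inner end, where the ramp meets the $\alpha_1$ step, the cap at $1$ in \eqref{eqn:fdef} makes the average only larger.
\end{itemize}

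Finally, $R_0$ is taken large so that all rounding errors ($\ceil{\cdot}$, $\pm1$ terms) are absorbed into $\delta$.
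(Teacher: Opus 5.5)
Your treatment of (i), (ii) and of the ramp is essentially sound, up to two slips noted at the end. The staircase part of (iii), however, rests on a miscount.

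From $|x|\le R_\dens+\lceil sR\rceil+jR$ you write the same bound for $|y|$, $y\in B_R(x)$. The correct bound is $|y|\le R_\dens+\lceil sR\rceil+(j+1)R$. So for the last $\lceil sR\rceil$ sites $x$ of the $j$-th step of $\xi_1^\pm$, up to $\lceil sR\rceil$ points of $B_R(x)$ lie in step $j+2$ of $\xi_0^\pm$ (in the ramp region if $j=m_\bl-1$). This is a fraction close to $s/2$, which does not vanish as $R\to\infty$. For these $y$, \eqref{eqn:phi_iter} only gives $\varphi_\mu(\zeta(y))\in(\alpha_{m_\bl-j},\beta_{m_\bl-j})$; in the ramp region the values can reach $1/e$. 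That is one level outside the target window $[(1+\delta)\alpha_{m_\bl-j+1},(1-\delta)\beta_{m_\bl-j+1}]$. Since $\zeta$ is arbitrary within the admissible intervals, your pointwise argument cannot give (iii) at these $x$. This is not a rounding effect that a large $R_0$ absorbs.

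What is missing is an averaging step together with an additional smallness condition on $s$. The points of $B_R(x)$ in steps $\le j+1$ of $\xi_0^\pm$ land in the target window with a slack $\gamma>0$. This slack depends only on the finitely many $\alpha_m,\beta_m$, $m\le m_\bl$, through the strictness in \eqref{eqn:phi_iter} and the monotonicity of the sequences. The bad points form a fraction of about $s/2$ and satisfy $0\le\varphi_\mu\le 1/e$, so they move the average by $O(s)$ only. Hence (iii) holds once $s$ is small compared with $\gamma$ and $\delta$ is smaller still. This fits your order of choices, since the sequences are fixed together with $\alpha_1$ and before $s$, but it is a genuine extra constraint.

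The same averaging is needed for the upper bound on the last step, where $\xi_1^+=\beta_1=1/e$. There $\varphi_\mu\le 1/e$ gives no factor $1-\delta$. The margin has to come from the part of the ball (a fraction at least about $(1-s)/2$) lying inside the staircase of $\xi_0^\pm$, where \eqref{eqn:phi_iter} gives values below $\beta_2$. For $\mu\ge e$ this forces $\beta_1$ strictly above $1/e$. Indeed, \eqref{eqn:phi_iter} with $m=1$ and the monotonicity of $(\beta_m)$ are incompatible with $\beta_1=1/e$ in that case. Moreover, with $\beta_1=1/e$, the choice $\zeta\equiv 1/\mu$ is admissible on the whole ball for $x$ at the outer end of that step and gives average exactly $\xi_1^+(x)$.

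Two smaller slips in the ramp. First, $\zeta(y)$ ranges up to $\xi_0^+(y)=1>1/\mu$, so $\varphi_\mu(\zeta(y))\ge\varphi_\mu(\alpha_1 f_0(y))$ needs the unimodality of $\varphi_\mu$ together with $\varphi_\mu(1)=\mu e^{-\mu}\ge\mu\alpha_1$, i.e.\ $\alpha_1\le e^{-\mu}$. Second, at the inner end the cap in $f$ makes the average smaller, not larger, by concavity of $\min\{\cdot,1\}$. The loss is of order $(1+s)/w$, which $\mu'>1$ absorbs only because $w$ is taken large.
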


\subsection{Assumption~\ref{ass:coarse}: definition of local goodness}
\label{sec:blocks}

As we mentioned in the introduction to Section~\ref{sec:barw}, comparison
with percolation and the associated notion of good blocks and well-started
configurations for the BARW have been introduced in~\cite{BCC24} to prove
survival and complete convergence, and then slightly modified
in~\cite{Oswald2024ancestral}, where finer controls were required on the
density of $\eta$ for the study of ancestral lineages. Here we ask even
stricter requirements in the definition of good blocks compared to the ones
introduced in~\cite{Oswald2024ancestral}. We briefly outline the differences
to these versions:

\begin{itemize}[leftmargin=*]
\item  The definition of good blocks in \cite{BCC24}
  requires that that the `spreading of goodness' and the `production of local
  agreement' (in the sense of Assumption~\ref{ass:coarse}(b,c)) hold. However,
  the notion of good driving noise is  dependent on the \emph{specific}
  configuration at the bottom layer of the block.

  We adopt here an alternative definition (that already appeared
  in~\cite{Oswald2024ancestral}) in which the set of good driving
  noise $G^U$ is such that the same properties of how configurations
  propagate hold \emph{uniformly} over well-started
  configurations. We refer to Remark 4.5 in~\cite{BCC24} and Section 4
  in~\cite{Oswald2024ancestral} for further discussion.

 \item Neither of the block constructions in~\cite{BCC24}
 nor~\cite{Oswald2024ancestral} requires coupling preservation (in the sense
   of (d) in Assumption~\ref{ass:coarse}) \emph{for all} times in a block,
 but only that equal configurations are produced at its top time layer. In
 Section~\ref{sec:LSS} we prove that even with this additional requirement
 good blocks are typical.
\end{itemize}

We now proceed with the definition of the coarse-graining construction.
Recall the comparison density $\xi_k^\pm$ from \eqref{eqn:xiplus},
\eqref{eqn:ximinus}, and
$R_\dens$ from \eqref{eqn:R_dens_xi}. Let
\begin{equation}
  \label{eqn:L_BARW}
  L_\bl \coloneq R_\dens = \frac{16}{\abs{\log \kappa}} \ceil{ R \log R},
  \qquad
  T_\bl \coloneq \ceil[\bigg]{ \frac{3L_\bl}{\ceil{sR}}} =O(\log R),
\end{equation}
where $s \in (0,1)$ is the `speed of the comparison density profiles' fixed
by Lemma~\ref{lem:xi_cdp}. We further set $K=\ceil{10/s}$ (note that
  $K \ge 4$, as required in~\eqref{eqn:K}). Recall from
\eqref{eqn:L_bl}--\eqref{eqn:block} that $\L = L_\bl \Z \times T_\bl \N_0$
and, for $(z,t) \in \mathbb{L}$,
\begin{equation*}
  \Block(z,t) \coloneq
  \big\{(x,n) \in \Z \times \N_0 :
    x\in B_{K L_\bl}(z), \, t < n \le t+T_\bl \big\}.
\end{equation*}

We define the set of well-started configurations as the set of the
configurations whose density at the bottom of the block is controlled by
$\xi_0^\pm$:
\begin{equation}
  \label{eqn:Geta_BARW}
  G^\eta \coloneq \big\{ \eta \in E^{B_{K L_\bl}(0)} :
  \delta_R(x, \eta) \in[ \xi_0^-(x), \xi_0^+(x)]
  \ \forall x \in B_{KL_\bl-R}(0) \big\}.
\end{equation}
Note that the bounds on the density in the set $G^\eta$ are required in fact
only for points $x \in \Supp(\xi_0^-)$ (because $\xi_0^-=0$ and $\xi_0^+=1$
  outside of this set), and $\Supp(\xi_0^-) \subseteq B_{K L_\bl-R}(0)$ for
large enough $R$ (cf.~\eqref{eqn:supp_xik}, \eqref{eqn:L_BARW}).

Recall from \eqref{eqn:Geta} that we write $\eta_n \in G_\loc^\eta(x)$ if
$\eta_n(x+\cdot) \vert_{B_{K L_\bl}(0)} \in G^\eta$. Recall also from
\eqref{eqn:eta_U_flow} that for every $n \in \N$ we can
represent $\eta_n$ as
\begin{equation}
  \label{eqn:eta_U_flow_simplified}
  \eta_{n}=F_{0,n}(\eta_0, U),
\end{equation}
where the function $F_{0,n}(\zeta,\cdot):  [0,1]^{\Z \times \N} \to E^{\Z}$
is measurable with respect to the $\sigma $-algebra
$\mathcal{U}_{0,n}$ defined in \eqref{eqn:calU}. To
simplify the notation we write $F_n \coloneq F_{0,n}$ and
$\mathcal U_n \coloneq \mathcal U_{0,n}$. With this notation, we implicitly
define the set $G^U$ of good driving noises.

\begin{definition}
  \label{def:GU}
  Let $G^U \subseteq [0,1]^{\Block(0,0)}$ be the set of all $U$'s which satisfy
  \begin{enumerate}[(i)]
    \item For every $\zeta \in G_\loc^\eta(0)$,
      $n=1,\dots,T_\bl$, and $y \in \Supp(\xi_n^-)$
    \begin{equation}
      \label{eqn:gooddens}
      \delta_R( y ,F_n(\zeta,U))\in[\xi_n^-(y),\xi^+_n(y)].
    \end{equation}

    \item For every $\zeta, \zeta' \in G_\loc^\eta(0)$
    \begin{equation}
      \label{eqn:couple}
      F_{T_\bl}(\zeta,U)(x)
      =F_{T_\bl}(\zeta',U)(x)
      \quad \text{for every }
      x \in B_{2L_\bl}(z'),\ z' \in \{-L_\bl, 0, L_\bl\}.
    \end{equation}

    \item  If $\zeta(x)=\zeta'(x)$ for every $x \in B_{2L_\bl}(0)$, then
    \begin{equation}
      F_{n}(\zeta,U)(x)
      = F_{n}(\zeta',U)(x) \quad\text{for every } x \in B_{L_\bl/2}(0),
      \ n=1,\dots,T_\bl.
    \end{equation}
  \end{enumerate}
\end{definition}

Parts (ii), (iii) of this definition correspond to parts (c), (d) of
Assumption~\ref{ass:coarse}, while part (i) will imply parts (a) and (b). The
fact that the properties (i)--(iii) in the definition above involve only the
$U$'s in $\Block(0,0)$ is easy to check. We do this in the next lemma. Note
also that, by the shift properties of $\xi_k^{\pm}$ (see below
  \eqref{eqn:supp_xik}), \eqref{eqn:gooddens} is equivalent to
\begin{equation}
  \label{eqn:gooddens_alt}
  F_{n}(\zeta, U) \in G_\loc^\eta(x)
  \quad \text{for every } \zeta\in G_\loc(0),
  \ x \in B_{n \ceil{sR}}(0),
  \ n=1,\dots,T_\bl.
\end{equation}

\begin{lemma}
  \label{lem:p12}
  There exists $R_0 \in \N$ such that Assumption~\ref{ass:coarse} holds for
  every $R \ge R_0$ with $G^\eta$ defined in~\eqref{eqn:Geta_BARW} and $G^U$
  from Definition~\ref{def:GU}.
\end{lemma}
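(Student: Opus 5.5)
The plan is to verify each part of Assumption~\ref{ass:coarse} directly from Definition~\ref{def:GU}, after first checking the bookkeeping. There are two preliminary checks.

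The first is the size condition \eqref{eqn:L}. We have $L_\bl = R_\dens = \frac{16}{\abs{\log\kappa}}\ceil{R\log R}$, which is $\ge 2R$ once $R$ is large. For the same reason, $\Supp(\xi_n^-) \subseteq B_{KL_\bl - R}(0)$ for all $n \le T_\bl$. To see this, use \eqref{eqn:supp_xik} and $T_\bl\ceil{sR} \le 3L_\bl + \ceil{sR}$. This gives a support radius of at most $4L_\bl + m_\bl R + \ceil{wR} + \ceil{sR}$, which is much smaller than $KL_\bl - R$ because $K = \ceil{10/s} \ge 10$ and $L_\bl \gg R$. It also follows that $G^\eta$ excludes the empty configuration, since $\xi_0^- \ge \varepsilon$ on its support by property~\ref{profile:ii}.

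The second check is measurability: (i)--(iii) should depend only on $U$ restricted to $\Block(0,0)$. By the finite range of Assumption~\ref{ass:flow}, $F_n(\zeta,U)(y)$ depends only on $U$ in the backward light cone $B_{nR}(y)$. The sites $y$ that matter lie in $B_{KL_\bl - R}(0)$, and the densities $\delta_R$ add one more $R$. The remaining point is that the light cone stays inside $B_{KL_\bl}(0)$ when the configuration is pinned down in terms of $\zeta$. Here I would argue that (i)--(iii) are quantified over all $\zeta \in G^\eta_\loc(0)$. Since $G^\eta$ only constrains $\zeta$ on $B_{KL_\bl}(0)$, the values of $\zeta$ outside are arbitrary, and the outer U's simply enter as part of the arbitrariness. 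So I would reformulate $G^U$ as a condition on $\Block(0,0)$ by taking the worst case over the values outside.

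With these in place, the four parts follow in order.

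\textbf{(b), spreading of goodness.} This is \eqref{eqn:gooddens_alt} with $n = T_\bl$. Since $T_\bl\ceil{sR} \ge 3L_\bl \ge L_\bl$, we get $F_{T_\bl}(\zeta,U) \in G^\eta_\loc(z')$ for $z' \in \{-L_\bl,0,L_\bl\}$. Translation invariance then moves this from $(0,0)$ to a general $(z,t)$.

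\textbf{(c) and (d).} These are exactly \ref{def:GU}(ii) and (iii), again transported by translation invariance. In (d) the hypothesis $\zeta,\zeta' \in G^\eta_\loc$ is simply not needed.

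\textbf{(a), uniform local survival.} Take $\eta_n \in G^\eta_\loc(z)$ and $\abs{x-z} \le L_\bl = R_\dens$. Then every $y \in B_R(x)$ has $\abs{y-z} \le R_\dens + R$, so $\delta_R(x,\eta_n) \ge \alpha_{m_\bl} \wedge \alpha_{m_\bl-1} > 0$. Also $\delta_R(x,\eta_n) \le 1$. Since $\varphi_\mu$ is continuous and positive on $(0,1]$, \eqref{eqn:barwass1} gives
\[
\P(\eta_{n+1}(x)\neq0\mid\eta_n) \ge \min_{w\in[\alpha_{m_\bl-1},1]}\varphi_\mu(w) =: \varepsilon_2 > 0 .
\]

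The main obstacle is the locality step. We must make precise that requiring (i)--(iii) for all $\zeta \in G^\eta_\loc(0)$ yields an event measurable with respect to $U$ on $\Block(0,0)$. This requires checking carefully that every site whose value is constrained by (i)--(iii) has its dependence cone inside $B_{KL_\bl}(0)$ over $T_\bl$ steps. I would handle it by showing that the relevant sites lie within $B_{KL_\bl - T_\bl R - R}(0)$, which holds because $T_\bl R = O(R\log R/(sR)) \cdot R \ll L_\bl$. Everything else in the argument is routine.
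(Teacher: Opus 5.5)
Your verification of (a)--(d) follows the paper. Parts (c) and (d) are Definition~\ref{def:GU}(ii),(iii), moved to a general $(z,t)$ by translation invariance. Part (b) is \eqref{eqn:gooddens_alt} at $n=T_\bl$ with $T_\bl\ceil{sR}\ge 3L_\bl$. Part (a) comes from a positive lower bound on $\delta_R(x,\eta_n)$ for $\abs{x-z}\le L_\bl$. You can read this bound off directly as $\delta_R(x,\eta_n)\ge\xi_0^-(x-z)=\alpha_{m_\bl}$; the detour through $y\in B_R(x)$ is unnecessary. Your checks of \eqref{eqn:L} and of $G^\eta$ excluding the empty configuration are correct, and the paper leaves them implicit.

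The gap is in the locality step, which you rightly flag as the main point but do not settle.

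First, the idea that the $U$'s outside $\Block(0,0)$ are absorbed by the arbitrariness of $\zeta$ outside $B_{KL_\bl}(0)$ is false. If $U(y',1)>1/e=\max\varphi_\mu$, then $F_1(\zeta,U)(y')=0$ for every $\zeta$. So what can be produced by varying $\zeta$ depends on the outer $U$'s. Taking a worst case over the outer values would define a set different from the $G^U$ of Definition~\ref{def:GU}, which is the set for which Proposition~\ref{prop:goodblocks} is proved.

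Second, your fallback rests on a false estimate: $T_\bl R$ is not $\ll L_\bl$. From $T_\bl=\ceil{3L_\bl/\ceil{sR}}$ one gets $T_\bl R\approx 3L_\bl/s>3L_\bl$; both quantities are of order $R\log R$. So the backward light cone over one block is wider than the region $\Supp(\xi_{T_\bl}^-)$ on which (i) imposes constraints. The containment you want is true only because $K=\ceil{10/s}$ scales like $1/s$. As written, your argument would equally ``prove'' it for $K=10$ and $s<1/2$, where it is false.

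The correct computation is the paper's. The values entering (i)--(iii) depend on $\zeta$ and $U$ only within radius
\[
  L_\bl+(m_\bl+1)R+\ceil{wR}+T_\bl(\ceil{sR}+R)
  \le 2L_\bl+2\Big(\frac 3s+1\Big)L_\bl
  =\Big(\frac 6s+4\Big)L_\bl\le \frac{10}{s}L_\bl\le KL_\bl .
\]
This uses $(m_\bl+1)R+\ceil{wR}\le L_\bl$, $\ceil{sR}\le R\le L_\bl$ and $T_\bl\le 3L_\bl/(sR)+1$ for large $R$, together with $s<1$. It shows at once that neither $U$ outside the block nor $\zeta$ outside $B_{KL_\bl}(0)$ enters, so no worst-case reformulation is needed.
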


\begin{proof}
  Parts (c) and (d) of Assumptions~\ref{ass:coarse} correspond exactly to
  points (ii) and (iii) in Definition~\ref{def:GU}, so we should only verify
  parts (a) and (b).

  To see (a), observe that the definitions of $\xi^\pm$ and $G^\eta $ imply that
  if $\eta_n \in G^\eta$, then
  $\delta_R(y, \eta_n) \in [\theta_\mu-\varepsilon_\bl,
    \theta_\mu+\varepsilon_\bl]$ for all $y \in
  B_{R_\dens}(0)=B_{L_\bl}(0)$. In view of~\eqref{eqn:pca}, this
  implies~\eqref{eqn:locgood_to1} with
  \begin{equation*}
    \varepsilon_2
    \coloneq\min \{ \varphi_\mu(u):
      u \in [\theta_\mu-\varepsilon_\bl, \theta_\mu+\varepsilon_\bl]\} >0.
  \end{equation*}

  To show (b) we only have to check that for any $\zeta \in G_\loc^\eta(0)$,
  by time $T_\bl$ the local goodness of $\zeta$ spreads enough, that is
  $F_{T_\bl}(\zeta,U) \in G_\loc^\eta(z)$ for every
  $z\in \{-L_\bl, 0, L_\bl\}$. This is immediate because (i) is equivalent to
  \eqref{eqn:gooddens_alt},  which gives local
  goodness on $B_{T_\bl \ceil{sR}}(0)$. Since
  $B_{T_\bl \ceil{ sR }}(0) \supset B_{3L_\bl}(0)$, part (b) follows.

  It only remains to check that our definition of $G^U$ is local, i.e.,~that
  the properties (i)--(iii) in Definition~\ref{def:GU} are determined by the
  $U$'s in $\Block(0,0)$.  Recall from~\eqref{eqn:supp_xik} with $k=T_\bl$ that
  \begin{equation*}
    \Supp(\xi_{T_\bl}^-)
    = B_{L_\bl+ m_\bl R +\ceil{sR}T_\bl+ \ceil{wR}}(0),
  \end{equation*}
  Since (i) involves density computations, this means that it requires a
  control on $F_n(\zeta,U)(y)$ for $n = 1, \dots, T_\bl$ and
  $y \in B_{L_\bl+ (m_\bl+1) R +\ceil{sR}T_\bl+ \ceil{wR}}(0)$ (in fact, a
    smaller, but more complicated, subset of those values would suffice). The
  sets of values of $F_n(\zeta ,U)(y)$ that need to be controlled for (ii)
  and (iii) are smaller, since
  $B_{L_\bl/2}(0) \subset B_{3L_\bl}(0)\subset \Supp(\xi_0^-)$. Recalling the
  observation below \eqref{eqn:calU}, the values we need to control thus depend
  only on $U(x,n)$ for $(x,n)$ in the set
  \begin{equation*}
    B_{T_\bl R + R}(\Supp(\xi_{T_\bl}^-))
    \times \{1,\dots,T_\bl \}
    = B_{L_\bl+ (m_\bl+ T_\bl+1) R +\ceil{sR}T_\bl+ \ceil{wR}}(0)
    \times \{1,\dots,T_\bl \}
  \end{equation*}
  (again this set is larger than necessary).
  Next observe that $(m_\bl+1) R + \ceil{ wR }  \le L_\bl$
  for all $R$ large enough (depending on $\varepsilon_\bl$, $m_\bl$, $w$), and
  (using \eqref{eqn:L_BARW} in the middle inequality)
  \begin{equation*}
    \frac{T_\bl (\ceil{sR}+R)}{L_\bl}
    \le \frac{2T_\bl R}{L_\bl}
    \le 2  \Big( \frac{3 L_\bl}{sR} +1 \Big) \frac{R}{L_\bl}
    \le 2\Big(\frac{3}{s} + 1\Big).
  \end{equation*}
  Combining these two facts we deduce that
  \begin{equation*}
    B_{T_\bl R + R}(\Supp(\xi_{T_\bl}^-))
    %\subseteq B_{T_\bl R + \ceil{ sR } T_\bl + 2L_\bl}(0)
    \subseteq B_{(6/s+4)L_\bl}(0)
    \subseteq B_{(10/s) L_\bl}(0) \subseteq B_{KL_\bl}(0)
  \end{equation*}
  by our choice of $K$, which completes the proof.
\end{proof}

\subsection{Assumption~\ref{ass:domination}: good blocks are typical}
\label{sec:LSS}

With the definition of $G^U$ at hand, recall from
\eqref{eqn:Yzt}, that
\begin{equation*}
  Y(0,0) = \ind_{ U \in G_\loc^U(0,0)} = \ind_{U|_{\Block(0,0)} \in G^U},
\end{equation*}
where in the last equality we used \eqref{eqn:GU}. We now show that good
blocks (that is blocks with $Y(\cdot,\cdot) = 1$) are typical when $R$ is
large enough. As explained in Remark~\ref{rem:LSS}, this is sufficient to
prove Assumption~\ref{ass:domination}.

\begin{proposition}
\label{prop:goodblocks}
  Let $\mu \in (1,e^2)$. Then
  \begin{equation*}
    \lim_{R \to \infty} \P( Y(0,0) = 1)=1.
  \end{equation*}
\end{proposition}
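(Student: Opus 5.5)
We show that each of the three conditions (i)--(iii) in Definition~\ref{def:GU} fails with probability tending to $0$ as $R\to\infty$, and conclude by a union bound. The difficulty is that all three conditions must hold \emph{uniformly} over the uncountable-looking set of well-started initial conditions $\zeta\in G^\eta_\loc(0)$. However, $\zeta$ enters only through its restriction to $B_{KL_\bl}(0)$, and $E=\{0,1\}$, so there are at most $2^{V_{KL_\bl}}=\exp(O(R\log R))$ relevant configurations. A union bound over them is therefore affordable only against error probabilities of order $\exp(-cR)$ or better, and the whole proof is organised to obtain such bounds.

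\textbf{Condition (i).} Fix $\zeta\in G^\eta_\loc(0)$. Apply Lemma~\ref{lem:propagation_xi} successively for $n=1,\dots,T_\bl$ and every $y\in\Supp(\xi_n^-)$. At each step the hypothesis $A_{n-1}(y,\cdot)$ is supplied by the previous step. Outside the support, $\xi^-=0$ and $\xi^+=1$, so there is nothing to check there. A union bound over the $T_\bl\cdot O(L_\bl)=O(R\log^2 R)$ space--time points gives failure probability at most $CR\log^2R\,e^{-cV_R}$. A further union bound over the $e^{O(R\log R)}$ choices of $\zeta$ gives a total of $e^{O(R\log R)-cR}$, which does \emph{not} tend to $0$. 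So the naive union bound over $\zeta$ is too crude.

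To avoid it, we use monotonicity of the update rule \eqref{eqn:barwass1} in the threshold: $\eta_{n+1}(x)=1$ iff $U(x,n+1)\le\varphi_\mu(\delta_R(x,\eta_n))$. We introduce events depending only on $U$. For each $(x,n)$ and each value $q\in\{0,1/V_R,\dots,1\}$ of the local density, the density of new particles in $B_R(y)$ is the empirical count $\frac1{V_R}\sum_{x\in B_R(y)}\ind\{U(x,n)\le \varphi_\mu(q_x)\}$. Its deviation from its mean is governed, uniformly over all threshold vectors taking values between $\varphi_\mu(\xi^-)$ and $\varphi_\mu(\xi^+)$, by the two extremal threshold vectors together with a monotone sandwich. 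Since $\varphi_\mu$ is not monotone, we cannot simply use the extremes. Instead we discretise the threshold range $[0,1/e]$ into $\delta$-dependent finitely many levels. We then require a DKW/Hoeffding-type uniform bound on the empirical distribution of $\{U(x,n):x\in B_R(y)\}$ evaluated at arbitrary per-site thresholds. This is not a single empirical-CDF statement, because the thresholds vary with $x$.

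We handle this by exploiting that, on the relevant event, the thresholds vary slowly. Split $B_R(y)$ into $M$ sub-intervals of length $R/M$, with $M$ large but fixed depending on $\delta,\mu,\varepsilon$. On each sub-interval, the local density is nearly constant up to $O(1/M)$, by the slow variation of $\delta_R$ in $x$: $|\delta_R(x)-\delta_R(x+1)|\le 1/V_R$. On each sub-interval we apply the DKW inequality, which gives failure probability $e^{-cR/M}$, uniformly in the thresholds. The resulting $U$-event has probability $1-CR\log^2R\,e^{-cR}\to1$. On this event, property~\eqref{eqn:xi.comp.bd1}, with its slack $\delta$, yields \eqref{eqn:gooddens} deterministically for every $\zeta$. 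This uniformity step is the main obstacle.

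\textbf{Condition (ii).} Take two well-started $\zeta,\zeta'$. By condition (i), both evolutions stay in $[\theta_\mu-\varepsilon_\bl,\theta_\mu+\varepsilon_\bl]$ on $B_{R_\dens(n)}$, where $\varphi_\mu$ is a $\kappa$-contraction. Run both with the same $U$ and track the discrepancy density $d_n(y)=\delta_R(y,|\eta_n-\eta'_n|)$. A site disagrees only if $U(x,n+1)$ falls between the two thresholds, and the two thresholds differ by at most $\kappa|\delta_R(x,\eta_n)-\delta_R(x,\eta'_n)|$. Again uniformly, via the same sub-interval/DKW device, we get $\sup_y d_{n+1}\le \kappa' \sup d_n + O(R^{-1/3})$ with $\kappa'<1$, on a region that shrinks by $R$ per step.

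After $O(\log R/|\log\kappa|)$ steps the discrepancy density falls below $R^{-1/3}$. At that point, the expected number of disagreeing sites in $B_{3L_\bl}$ in one further step is $O(L_\bl\cdot\kappa^{\,n})$. The choice $L_\bl=\frac{16}{|\log\kappa|}\lceil R\log R\rceil$ provides a time horizon $T_\bl=O(\log R)$ with $\kappa^{T_\bl}\le R^{-c}$ small enough. To finish the coupling exactly, we use the fact that when the thresholds differ by $\epsilon$ at every site of $B_{3L_\bl}$, the probability of any disagreement is at most $O(L_\bl\epsilon)$. Finally, we take a union bound over the discretised discrepancy levels rather than over $\zeta$.

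\textbf{Condition (iii).} This is handled by the same contraction argument, now applied to a pair that agrees initially on $B_{2L_\bl}(0)$. The discrepancy can only enter from outside at speed $R$ per step, while the target ball $B_{L_\bl/2}$ is at distance $\tfrac32L_\bl\gg T_\bl R$. Since $T_\bl R=O(R\log R/s)$ while $L_\bl$ carries the factor $16/|\log\kappa|$, some care is needed here. If this light-cone bound is insufficient, we fall back on the contraction bound inside $B_{2L_\bl}$, which keeps the incoming disagreement density geometrically small and then exactly zero with high probability. Combining the three conditions, $\P(Y(0,0)=0)\to0$.
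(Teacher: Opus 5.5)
The gap is in condition (ii), and consequently in (iii).

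Your recursion $\sup_y d_{n+1}\le\kappa'\sup_y d_n+O(R^{-1/3})$ is a high-probability bound obtained from uniform (DKW-type) concentration. Its additive error, made of the DKW tolerance plus the variation of the thresholds across sub-intervals, does not decay with $n$. Iterating it therefore drives the discrepancy density only down to a floor of order $R^{-1/3}$, never to zero.

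Exact agreement is a much finer statement. Where the two thresholds differ by $\epsilon$, the copies disagree with probability $\epsilon$. Having no disagreement anywhere in $B_{3L_\bl}(0)$ thus requires per-site gaps well below $1/(R\log R)$ on average. For (iii) the same holds on the whole space-time region $A_{h_1,T_\bl}$, which has $O(R(\log R)^2)$ points. Such gaps lie below even the resolution $1/V_R$ of a density over $V_R$ sites, let alone its $R^{-1/2}$ fluctuation scale. With gaps of order $R^{-1/3}$, the expected number of disagreements in $B_{3L_\bl}(0)$ is of order $R^{2/3}\log R\to\infty$.

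Your claim that this expected number is $O(L_\bl\kappa^n)$ silently switches to a first-moment recursion without a floor, which is not what you derived. For one fixed pair such a recursion is easy, but then you cannot union-bound over pairs, and a union bound over `discretised discrepancy levels' does nothing about the floor. For (iii) you rightly suspected that the light cone is insufficient: $T_\bl R\ge 3L_\bl>\tfrac32 L_\bl$ since $\lceil sR\rceil\le R$. So (iii) inherits the same flawed step.

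The missing idea is to contract \emph{in expectation} a single random variable that encodes all initial conditions at once, namely $S_n(x):=\sup_{\zeta\in G^\eta_\loc(0)}F_n(\zeta,U)(x)-\inf_{\zeta\in G^\eta_\loc(0)}F_n(\zeta,U)(x)$. Because $\ind\{U(x,n)\le a\}$ is monotone in $a$, $S_n(x)$ is the indicator that $U(x,n)$ lies between the infimum and the supremum over $\zeta$ of the thresholds $\varphi_\mu(\delta_R(x,F_{n-1}(\zeta,U)))$. Hence $\E[S_n(x)\mid\mathcal U_{n-1}]$ equals their difference. On $G_{n-1}$ (your condition (i)), the contraction of $\varphi_\mu$ bounds this by $\kappa V_R^{-1}\sum_{y\in B_R(x)}S_{n-1}(y)$.

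Take expectations and iterate along a backward cone of height $h=h_1-h_0\ge\frac{4}{|\log\kappa|}\log R-2$ that stays in the region where (i) forces densities near $\theta_\mu$. This gives $\E[S_n(x)\ind_{G_{n-1}}]\le\kappa^h\le CR^{-4}$. Only linearity of expectation is used, so there is no floor. Markov's inequality and a union bound over $A_{h_1,T_\bl}$ then give exact agreement for all pairs simultaneously. This yields (ii), and together with the light cone up to time $h_1$, also (iii).

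As for (i), your sub-interval/DKW device does work. With $M$ fixed the error is $O(1/M)$ plus the tolerance, which can be put below the slack $\delta\xi_n^-$ at failure probability $e^{-cR}$. But your reason for needing it is mistaken: non-monotonicity of $\varphi_\mu$ does not prevent a sandwich. Take $\varphi^+(y)$ and $\varphi^-(y)$ to be the maximum and minimum of $\varphi_\mu$ on $[\xi^-_{n-1}(y),\xi^+_{n-1}(y)]$; these bound every admissible threshold. Property (iii) of the comparison profiles, applied to the argmax/argmin profile, then gives the $(1\pm\delta)$ bounds on the two extremal means. This is the paper's much shorter route.
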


We first introduce some notation and then explain the strategy of the proof
(see Figure~\ref{fig:blockb} for an illustration). For $n=1,\dots,T_\bl$, we
define the event
\begin{equation}
  \label{eqn:G}
  G_n= \big\{ F_{k}(\zeta,U) \in G_\loc^\eta(x)
    \text{ for all } x \in B_{k \ceil{ sR }}(0),
    \ k=1,\dots,n, \text{ and all }  \zeta \in G_\loc^\eta(0)\big\}.
\end{equation}
Note that, by \eqref{eqn:gooddens_alt}, $G_{T_\bl}$ corresponds to (i) in
Definition~\ref{def:GU} of $G^U$. We will later show that $G_{T_\bl}$ occurs
with high probability (see Lemma~\ref{lem:propagation_xi_uniform} below).
From the definition of $\xi_0^\pm$ it is easy to see that on $G_n$
\begin{equation}
  \label{eqn:closetotheta}
  \delta_R(x, F_{k}(\zeta,U))
  \in [\theta_\mu-\varepsilon, \theta_\mu+\varepsilon]
  \quad \forall x \in B_{L_\bl+k \ceil{ sR }}(0),
  \ k=1,\dots,n, \ \zeta \in G_\loc^\eta(0).
\end{equation}
Let
\begin{equation}
  \Theta_n \coloneq
  \big\{ (x,k) : x \in B_{L_\bl+k \ceil{ sR }}(0), \ k=1,\dots,n\big\}
\end{equation}
be the region of space-time points $(x,k)$ in $\Block(0,0)$ involved in
\eqref{eqn:closetotheta}. The set $\Theta_{T_\bl}$ is the region in between
the two blue dashed lines in Figure~\ref{fig:blockb}, with initial width
$2L_\bl$ at the bottom of the block and growing linearly with slope
$\ceil{ sR }$.

Next, for $m<n$, we set
\begin{equation}
  A_{m,n} =
  \{(x,k) : x \in B_{L_\bl/2 + k\ceil{ sR }  }(0), \, k =m+1,\dots,n \}.
\end{equation}
The set $A_{0,T_\bl}$ is the region in between the two red dashed lines in
Figure~\ref{fig:blockb}, with initial width $L_\bl$ at the bottom of the
block and growing linearly with slope $\ceil{ sR }$.

Finally, observe that if $\zeta(x)=\zeta'(x)$ for every
$x \in B_{2L_\bl}(0)$, then
\begin{equation}\label{eqn:green}
  F_{k}(\zeta,U)(x)
  =F_{k}(\zeta',U)(x) \quad \text{for all } x \in B_{2L_\bl-kR}(0)
  \text{ and }
  k=0,\dots, \floor{ 2L_\bl/R}.
\end{equation}
The region where this holds is the triangle formed by the two green
dashed lines with slope $R$ in Figure~\ref{fig:blockb}.

We denote by $h_0$ the first time at which the blue and green lines cross
(rounded up), and by $h_1$ the first time at which the red and green lines
cross (rounded down). Let $h\coloneq h_1-h_0$. It is not hard to see that $h$
is of order $\log R$ (see the proof of Proposition~\ref{prop:goodblocks} for
  more details). We write
\begin{equation*}
  C_h(x,n)\coloneq\{ (y,k) : y \in B_{(n-k)R}(x), \, k=n-h, \dots, n \}.
\end{equation*}
for the cone with vertex at $(x,n)$, slope $R$ and height $h$. By construction,
\begin{equation}
  \label{eqn:h0_cone}
  \text{if }(x,n) \in A_{h_1, T_\bl},
  \text{ then }C_h(x,n) \subseteq \Theta_{T_\bl}.
\end{equation}

\begin{figure}[t]
  \centering \begin{tikzpicture}[xscale=0.50,yscale=0.7]
  %good blocks
  \fill [solid,gray!30] (-10,0) rectangle (10,4);

  % A(h1,Tbl)
  \fill[red!20]
  (-7,4) -- (7,4) --(2.32,0.86) -- (-2.32,0.86)-- cycle;

  % grid and labels
  \draw[step=0.1cm,black!5](-11.5,-.3) grid (11.5,5.4);
  \draw[xstep=2cm,ystep=4] (-11.5,-.3) grid (11.5,5.4);

  \node[below] at (-10,-0.27){\small $-KL_\bl$};
  \draw[] (-10,0.1)--(-10,-0.1);

    \node[below] at (-6,-0.27){\small $-3L_\bl$};
  \draw[] (-6,0.1)--(-6,-0.1);

  \node[below] at (-2,-0.27){\small $-L_\bl$};
  \draw[] (-2,0.1)--(-2,-0.1);

  \node[below] at (0,-0.27){\small $0$};
  \draw[] (0,0.1)--(0,-0.1);

  \node[below] at (2,-0.27){\small $L_\bl$};
  \draw[] (2,0.1)--(2,-0.1);

    \node[below] at (6,-0.27){\small $-3L_\bl$};
  \draw[] (6,0.1)--(6,-0.1);

  \node[below] at (10,-0.27){\small $KL_\bl$};
  \draw[] (10,0.1)--(10,-0.1);

  \node[left] at (-11.8,0){\small $0$};
  \node[left] at (-11.8,4){\small $T_\bl$};

  %\draw[stealth-stealth] (-0.5,4.25)--(2.5,4.2);
  %\node[above] at (1,4.35){\small $L_\bl$};

  %\draw[stealth-stealth] (-6.5,-1.4)--(8.5,-1.4);
  %\node[below] at (1,-1.4){\small $c_\sspace L_\bl$};

  % meeting times
  %\draw[dashed,black] (2.7,0.93)--(-11.5,0.93) node[left,yshift=2pt] {\small $h_1$};
  %\draw[dashed,black] (3.5,0.61)--(-11.5,0.61) node[left] {\small $h_0$};
  \draw[dashed,black] (11.5,0.86)--(-11.5,0.86) node[left,yshift=2pt] {\small $h_1$};
  \draw[dashed,black] (11.5,0.55)--(-11.5,0.55) node[left] {\small $h_0$};
  \draw[<->,solid,black] (11.7,0.55)--(11.7,0.86) node[midway,right] {\small $h$};

  % equilibrium region
  \draw[thick,dashed,blue] (-2,0)--(-8,4) (2,0)--(8,4);

  % coupled region
  \draw[thick,dashed,red] (-1,0)--(-7,4) (1,0)--(7,4);

  % secured coupling
  \draw[thick,dashed,Green] (-4,0)--(0,2) (4,0)--(0,2);

  %cone Cxn
  \draw[black,thin] (-4.05, 2.68) -- (-2.45, 2.68) -- (-3.25, 3) -- cycle;
  \node[above] at (-3.25, 3) {\small $C_h(x,n)$};
  \fill (-3.25, 3) circle (2pt);

  % density profiles
  % \draw[thick,dashed] (-2.5,0.2)--(2.5,0.2) (-2.5,0.4)--(2.5,0.4)
  %(-3.25,0)--(-2.5,0.2) (-3.25,0.6)--(-2.5,0.4)
  %(3.25,0)--(2.5,0.2) (3.25,0.6)--(2.5,0.4);

  %\draw[thick,dashed] (-8.75,4.2)--(8.75,4.2) (-8.75,4.4)--(8.75,4.4)
  %(-9.5,4)--(-8.75,4.2) (-9.5,4.6)--(-8.75,4.4)
  %(9.5,4)--(8.75,4.2) (9.5,4.6)--(8.75,4.4);

\end{tikzpicture}
  \caption{The dark grey region corresponds to $\Block(0,0)$. The blue dashed
    lines enclose $\Theta_{T_\bl}$, that is the region of space-time points
    where on $G_{T_\bl}$ the density is in
    $[\theta_\mu-\varepsilon_\bl, \theta_\mu+\varepsilon_\bl]$. The red
    dashed lines delimit the coupled region $A_{0,T_\bl}$. Both regions grow
    linearly with slope $\ceil{ sR }$. The green triangle (with slope $R$) is
    the region of points where agreement is guaranteed by the fact that
    $\zeta(x)=\zeta'(x)$ for every $x \in B_{2L_\bl}(0)$. For every point
    $(x,n) \in A_{h_1,T_\bl}$ (the red region), the cone $C_h(x,n)$ (black
      triangle) is contained in $\Theta_{T_\bl}$. Note that the picture is
    not drawn to scale: $L_\bl$ is of order $R\log R$ while $T_\bl$ is of
    order $\log R$.}
  \label{fig:blockb}
\end{figure}

The proof of Proposition~\ref{prop:goodblocks} is organised into the
following steps: first, in Lemma~\ref{lem:disagreement}, we show that two
processes started from any initial configurations
$\zeta,\zeta' \in G_\loc^\eta(0)$, if $G_{n-1}$ occurs, agree at time $n$ on
every $x$ such that $(x,n) \in A_{h_1, T_\bl}$ with high probability. This
gives (ii) in the definition of $G^U$. Together with~\eqref{eqn:green}, this
implies that if $\zeta=\zeta'$ on $B_{2L_\bl}(0)$, the processes are equal on
$B_{L_\bl/2}(0)$ for all times until $T_\bl$, which corresponds to (iii) in
the definition of $G^U$.
The idea of the proof that agreement is produced at
$(x,n) \in A_{h_1, T_\bl}$ when starting from
$\zeta,\zeta' \in G_\loc^\eta(0)$, relies on the fact that on $G_{n-1}$ the
densities of both processes are close to $\theta_\mu$ for all points in
$C_h(x,n) \subseteq \Theta_{T_\bl}$ (cf.~\eqref{eqn:h0_cone}). Since $h$ can
be made large by increasing $R$, this essentially forces that two processes
agree at $(x,n)$.

Then, in Lemma~\ref{lem:propagation_xi_uniform}, we show that $G_{T_\bl}$
(and thus (i) in the definition of $G^U$, as explained above) occurs with
high probability if $R$ is sufficiently large.
Proposition~\ref{prop:goodblocks} is then proved easily, by combining the
results of Lemmas~\ref{lem:disagreement} and \ref{lem:propagation_xi_uniform}.

\begin{lemma}
  \label{lem:disagreement}
  With $\kappa$ as in \eqref{eqn:kappa}, it holds that
  \begin{equation*}
    \P \Big( \exists \zeta, \zeta' \in G_\loc^\eta(0),
      \exists (x,n) \in A_{h_1, T_\bl} :
      \big\{ F_{n}(\zeta,U)(x) \neq F_n(\zeta',U)(x)
      \} \cap G_{n-1} \Big)
    \le \abs{A_{h_1,T_\bl}} \kappa^h.
  \end{equation*}
\end{lemma}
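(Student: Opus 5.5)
The plan is a first-moment argument. For each space-time point I bound the expected indicator of \emph{possible disagreement}, uniformly over the initial conditions. I then show this indicator contracts by a factor $\kappa$ at each time step inside the cone $C_h(x,n)$, and finish with a union bound over $A_{h_1,T_\bl}$. For $(y,k)$ with $k\le T_\bl$ define
\begin{equation*}
  \Delta_k(y) \coloneq \ind\big\{\exists \zeta,\zeta'\in G_\loc^\eta(0):\ F_k(\zeta,U)(y)\neq F_k(\zeta',U)(y)\big\}.
\end{equation*}
This is $\mathcal U_k$-measurable, and so is $G_k$. Moreover $G_{n-1}\subseteq G_k$ for every $k\le n-1$. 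The event in the lemma is $\bigcup_{(x,n)\in A_{h_1,T_\bl}}\{\Delta_n(x)=1\}\cap G_{n-1}$. It therefore suffices to show $\E[\Delta_n(x)\ind_{G_{n-1}}]\le \kappa^h$ for each $(x,n)\in A_{h_1,T_\bl}$.

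\textbf{One-step contraction.} Fix $(y,k+1)$ with $(y,k+1)\in C_h(x,n)$ and $k\ge n-h$. Given $\mathcal U_k$, define
\begin{equation*}
  a \coloneq \min_\zeta \varphi_\mu\big(\delta_R(y,F_k(\zeta,U))\big),
  \qquad
  b \coloneq \max_\zeta \varphi_\mu\big(\delta_R(y,F_k(\zeta,U))\big),
\end{equation*}
where $\zeta$ ranges over $G_\loc^\eta(0)$. By \eqref{eqn:F_BARW}, all copies use the same uniform $U(y,k+1)$, which is independent of $\mathcal U_k$. Hence $\Delta_{k+1}(y)=1$ exactly when $U(y,k+1)\in(a,b]$, and so
\begin{equation*}
  \P\big(\Delta_{k+1}(y)=1\mid\mathcal U_k\big)=b-a .
\end{equation*}
On $G_k$, by \eqref{eqn:closetotheta} together with \eqref{eqn:h0_cone} (which gives $C_h(x,n)\subseteq\Theta_{T_\bl}$), every density $\delta_R(y,F_k(\zeta,U))$ lies in $[\theta_\mu-\varepsilon_\bl,\theta_\mu+\varepsilon_\bl]$. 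On this interval $\varphi_\mu$ is $\kappa$-Lipschitz, so $b-a$ is at most $\kappa$ times the spread of these densities. Two densities $\delta_R(y,\cdot)$ can differ only through sites where the configurations disagree, so the spread is at most $V_R^{-1}\sum_{z\in B_R(y)}\Delta_k(z)$. Putting these together gives
\begin{equation*}
  \E\big[\Delta_{k+1}(y)\ind_{G_k}\big]\le \kappa\, V_R^{-1}\sum_{z\in B_R(y)}\E\big[\Delta_k(z)\ind_{G_k}\big]\le \kappa\, V_R^{-1}\sum_{z\in B_R(y)}\E\big[\Delta_k(z)\ind_{G_{k-1}}\big].
\end{equation*}

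\textbf{Iteration and conclusion.} Set $m_j\coloneq\max_{y\in B_{jR}(x)}\E[\Delta_{n-j}(y)\ind_{G_{n-j-1}}]$, with the convention $G_{-1}$ equal to the whole space if needed. The display gives $m_j\le\kappa\, m_{j+1}$ for $j<h$, since $B_R(y)\subseteq B_{(j+1)R}(x)$ for $y\in B_{jR}(x)$. Trivially $m_h\le 1$. Hence $\E[\Delta_n(x)\ind_{G_{n-1}}]=m_0\le\kappa^h$, and summing over $A_{h_1,T_\bl}$ gives the lemma.

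The main obstacle is the geometric bookkeeping. At every intermediate level $k\in[n-h,n-1]$, all sites of $B_R(y)$ for $y$ in the cone must lie in the region where $G_k$ controls the density, so that the $\kappa$-Lipschitz bound applies. I would first verify that \eqref{eqn:h0_cone} (with the cone's base at height $\ge h_0$, inside $\Theta$) covers the extra $R$-neighbourhood at each level. If it does not, I would shrink $h$ by one, which changes nothing essential. Measurability of the max/min over the possibly infinite family $\zeta$ is harmless, because only the finitely many values in a finite window matter.
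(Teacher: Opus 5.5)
Your proposal is correct and follows essentially the same route as the paper. Both arguments condition on $\mathcal U_k$ and identify the conditional disagreement probability as $b-a$. Both then contract by $\kappa$ on $G_k$, bound the density spread by the average of disagreement indicators over $B_R(y)$, iterate $h$ times inside $C_h(x,n)\subseteq\Theta_{T_\bl}$ using $G_k\subseteq G_{k-1}$, and finish with a union bound over $A_{h_1,T_\bl}$. The only cosmetic differences are that the paper iterates averages rather than maxima over balls, and it writes your indicator as $\sup-\inf$ together with Markov's inequality, which is the same thing here.

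The geometric bookkeeping you were unsure about already works with the given $h$, so you do not need to shrink it. Each step only requires $(y,k)\in C_h(x,n)$ for $n-h\le k\le n-1$, and this is exactly what \eqref{eqn:h0_cone} supplies.
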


\begin{proof}
  In this proof we write $G_\loc^\eta$ for $G_\loc^\eta(0)$. We start by
  showing that for every fixed $(x,n) \in A_{h_1,T_\bl}$ it holds that
  \begin{align}\label{eqn:disagreement}
    \E \Big[ \Big( \sup_{\zeta \in G_\loc^\eta} F_{n}(\zeta,U)(x)
        - \inf_{\zeta \in G_\loc^\eta} F_{n}(\zeta,U)(x) \Big)
      \ind_{G_{n-1}} \Big]
    \le \kappa^{h}.
  \end{align}
  To see this, take $(x,n) \in A_{h_1,T_\bl}$. Recall that
  $\mathcal{U}_{n}=\sigma(\{ U(y,j) : y \in \Z, 0 < j \le n\})$. Using that
  $F_{n-1}(\zeta,U)$ is $\mathcal{U}_{n-1}$--measurable, an easy computation
  gives
  \begin{align*}
    & \E \Big[ \sup_{\zeta \in G_\loc^\eta} F_n(\zeta,U)(x)
      - \inf_{\zeta \in G_\loc^\eta} F_n(\zeta,U)(x)
      \Bigm|  \mathcal{U}_{n-1}\Big]
    \\&= \E \Big[ \sup_{\zeta \in G_\loc^\eta}
      \ind_{\{U(x,n) \le \varphi_\mu(\delta_R(x, F_{n-1}(\zeta,U)))\}}
      - \inf_{\zeta \in G_\loc^\eta}\ind_{\{U(x,n)
          \le \varphi_\mu(\delta_R(x, F_{n-1}(\zeta, U))) \}}
      \Bigm| \mathcal{U}_{n-1}\Big]
    \\&= \E \Big[ \ind_{\{U(x,n) \le
          \sup_{\zeta \in G_\loc^\eta} \varphi_\mu(\delta_R(x, F_{n-1}(\zeta,U))) \}}
      - \ind_{\{U(x,n) \le
          \inf_{\zeta \in G_\loc^\eta}\varphi_\mu(\delta_R(x, F_{n-1}(\zeta,U))) \}}
      \Bigm| \mathcal{U}_{n-1}\Big]
    \\&= \P \Big( U(x,n) \le \sup_{\zeta \in G_\loc^\eta}
      \varphi_\mu(\delta_R(x, F_{n-1}(\zeta,U)))
      -\inf_{\zeta \in G_\loc^\eta}\varphi_\mu(\delta_R(x, F_{n-1}(\zeta,U)))
      \Bigm| \mathcal{U}_{n-1}\Big)
    \\&=\sup_{\zeta \in G_\loc^\eta} \varphi_\mu(\delta_R(x, F_{n-1}(\zeta,U)))
    -\inf_{\zeta \in G_\loc^\eta}\varphi_\mu(\delta_R(x, F_{n-1}(\zeta,U))).
  \end{align*}
  On the event $G_{n-1}$ (which is also measurable with respect to
    $\mathcal{U}_{n-1}$) we can use the contracting property of $\varphi_\mu$
  (see~\eqref{eqn:kappa}) since $\delta_R(x, F_{n-1}(\zeta,U)) \in
  [\theta_\mu-\varepsilon_\bl, \theta_\mu+\varepsilon_\bl]$ for every $\zeta
  \in G_\loc^\eta$. We obtain
  \begin{align*}
    &\E \Big[ \sup_{\zeta \in G_\loc^\eta} F_n(\zeta,U)(x)
      - \inf_{\zeta \in G_\loc^\eta} F_n(\zeta,U)(x)
      \Bigm| \mathcal{U}_{n-1}\Big] \ind_{G_{n-1}}
    \\& \le \kappa \Big(
      \sup_{\zeta \in G_\loc^\eta} \delta_R(x, F_{n-1}(\zeta,U))
      -\inf_{\zeta \in G_\loc^\eta} \delta_R(x, F_{n-1}(\zeta,U)) \Big)
    \ind_{G_{n-1}}
    \\&\le \kappa \cdot \frac{1}{V_R}
    \sum_{y \in B_R(x)}\Big( \sup_{\zeta \in G_\loc^\eta} F_{n-1}(\zeta,U)(y)
      - \inf_{\zeta \in G_\loc^\eta} F_{n-1}(\zeta,U)(y) \Big)  \ind_{G_{n-2}},
  \end{align*}
  where in the last inequality we have used that $G_{n-1} \subseteq G_{n-2}$.
  Taking expectations on both sides yields
  \begin{align*}
    & \E \Big[ \Big( \sup_{\zeta \in G_\loc^\eta} F_n(\zeta,U)(x)
        - \inf_{\zeta \in G_\loc^\eta} F_n(\zeta,U)(x) \Big)
      \ind_{G_{n-1}} \Big]
    \\&\le \frac{\kappa}{V_R} \sum_{y\in B_R(x)}
    \E \Big[ \Big( \sup_{\zeta \in G_\loc^\eta} F_{n-1}(\zeta,U)(y)
        - \inf_{\zeta \in G_\loc^\eta} F_{n-1}(\zeta,U)(y) \Big)
      \ind_{G_{n-2}} \Big].
  \end{align*}
  By~\eqref{eqn:h0_cone}, $C_h(x,n) \subseteq \Theta_{T_\bl}$ for every
  $(x,n) \in A_{h_1, T_\bl}$, so we can iterate the above $h$ times and
  obtain that
  \begin{align*}
    & \E \Big[ \Big( \sup_{\zeta \in G_\loc^\eta} F_n(\zeta,U)(x)
        - \inf_{\zeta \in G_\loc^\eta} F_n(\zeta,U)(x) \Big) \ind_{G_{n-1}}
      \Big]
    \\ & \le {\kappa^h} \bigg( \frac 1{V_R}\sum_{y_1 \in B_{R}(x)} \frac
      1{V_R} \sum_{y_2 \in B_{R}(y_1)}
      \cdots
      \\&\qquad \frac{1}{V_R} \sum_{y_h \in B_{R}(y_{h-1})}
      \E \Big[ \Big(\sup_{\zeta \in G_\loc^\eta} F_{n-h}(\zeta,U)(y_h)
          - \inf_{\zeta \in G_\loc^\eta} F_{n-h}(\zeta,U)(y_h) \Big)
        \ind_{G_{n-h-1}} \Big]\bigg).
  \end{align*}
  The right-hand side in the above is smaller than $\kappa^h$, because the
  expectation is smaller than 1. This completes the proof
  of~\eqref{eqn:disagreement}.

  Using the definition of conditional expectation and Markov's inequality
  (conditional to $\mathcal{U}_{n-1}$) we obtain
  \begin{equation}
    \label{eqn:onek}
    \begin{split}
      & \P \Big(  \Big\{ \sup_{\zeta \in G_\loc^\eta} F_n(\zeta,U)(x)
          - \inf_{\zeta \in G_\loc^\eta} F_n(\zeta,U)(x) =1 \Big\}
        \cap G_{n-1} \Big)
      \\&= \E \Big[ \P \Big( \sup_{\zeta \in G_\loc^\eta} F_n(\zeta,U)(x)
          - \inf_{\zeta \in G_\loc^\eta} F_n(\zeta,U)(x) =1
          \Bigm| \mathcal{U}_{n-1} \Big)
        \ind_{G_{n-1}} \Big]
      \\& \le \E \Big[ \E \Big[ \sup_{\zeta \in G_\loc^\eta} F_n(\zeta,U)(x)
          - \inf_{\zeta \in G_\loc^\eta} F_n(\zeta,U)(x)
          \Bigm| \mathcal{U}_{n-1} \Big] \ind_{G_{n-1}} \Big]
      \\&=\E \Big[ \Big( \sup_{\zeta \in G_\loc^\eta} F_n(\zeta,U)(x)
          - \inf_{\zeta \in G_\loc^\eta} F_n(\zeta,U)(x) \Big)
        \ind_{G_{n-1}} \Big]
      \le \kappa^{h},
    \end{split}
  \end{equation}
  where the last inequality follows by~\eqref{eqn:disagreement}. Since
  \begin{align*}
    &\bigcup_{\zeta,\zeta' \in G_\loc^\eta} \bigcup_{(x,n) \in A_{h_1,T_\bl}}
    \big\{ F_n(\zeta,U)(x) \neq F_n(\zeta',U)(x) \big\} \cap G_{n-1}
    \\ &\qquad  = \bigcup_{(x,n) \in A_{h_1,T_\bl}}
    \Big\{ \sup_{\zeta \in G_\loc^\eta} F_n(\zeta,U)(x)
      - \inf_{\zeta \in G_\loc^\eta} F_n(\zeta,U)(x) =1 \Big\}
    \cap G_{n-1}
  \end{align*}
  and the bound in~\eqref{eqn:onek} is uniform over
  $(x,n) \in A_{h_1,T_\bl}$, the result follows with a union bound.
\end{proof}

We next prove that the event $G_{T_\bl}$ (defined in~\eqref{eqn:G} and
  corresponding to (i) in the definition of $G^U$) holds with high
probability. This extends Lemma~\ref{lem:propagation_xi} so that it holds
uniformly over all starting configurations with density within $\xi_0^\pm$.

\begin{lemma}
  \label{lem:propagation_xi_uniform}
  Let $\xi^\pm$ be the \emph{comparison density profiles} from
  Section~\ref{sec:tools}. There exist constants $c,C>0$ such that
  \begin{equation*}
    \P \Big( \delta_R( \cdot , F_{n}(\zeta,U))  \in [\xi_n^-, \xi_n^+]
      \ \forall n=1,\dots,T_\bl, \, \forall \zeta \in G_\loc^\eta(0) \Big)
    \ge 1- C V_{T_\bl \ceil{ sR }} T_\bl e^{-c V_R}.
  \end{equation*}
\end{lemma}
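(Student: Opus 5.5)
The key difficulty is uniformity over the uncountable-looking (but finite, since $E=\{0,1\}$) family $\zeta\in G_\loc^\eta(0)$. I would remove it by monotonicity of the \emph{bounding} argument rather than of the dynamics. The plan is induction over $n=1,\dots,T_\bl$, using the same device as in the proof of Lemma~\ref{lem:disagreement}: replace the random densities of all processes simultaneously by their extremes over $\zeta$. At step $n$, assume (the event $G'_{n-1}$) that $\delta_R(y,F_{n-1}(\zeta,U))\in[\xi_{n-1}^-(y),\xi_{n-1}^+(y)]$ for all $y$ and all $\zeta\in G_\loc^\eta(0)$ (for $n-1=0$ this is the definition of $G^\eta$). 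Since $F_n(\zeta,U)(x)=\ind\{U(x,n)\le\varphi_\mu(\delta_R(x,F_{n-1}(\zeta,U)))\}$, I have, for every $\zeta$ and $x$,
\begin{equation*}
  \ind\{U(x,n)\le \underline p_n(x)\}\le F_n(\zeta,U)(x)\le \ind\{U(x,n)\le \overline p_n(x)\},
\end{equation*}
where $\underline p_n(x)=\inf_\zeta\varphi_\mu(\delta_R(x,F_{n-1}(\zeta,U)))$ and $\overline p_n(x)=\sup_\zeta\varphi_\mu(\delta_R(x,F_{n-1}(\zeta,U)))$ are $\mathcal U_{n-1}$-measurable.

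The first key step is to show that on $G'_{n-1}$, for $y\in\Supp(\xi_n^-)$, the averages satisfy $V_R^{-1}\sum_{x\in B_R(y)}\underline p_n(x)\ge(1+\delta)\xi_n^-(y)$ and $V_R^{-1}\sum_{x\in B_R(y)}\overline p_n(x)\le(1-\delta)\xi_n^+(y)$. This follows from property~\ref{profile:iii} of Definition~\ref{def:density_profiles}, provided I can pick for each $x\in B_R(y)$ a value $\zeta(x)\in[\xi_{n-1}^-(x),\xi_{n-1}^+(x)]$ realising (or approaching) the inf/sup of $\varphi_\mu$ pointwise. Since~\ref{profile:iii} is stated for \emph{arbitrary} functions $\zeta:B_R(y)\to\mathbb R$ in the band, not for densities of a single configuration, I can take $\zeta(x)$ to be the density value of a minimising (resp.\ maximising) configuration separately for each $x$. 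All these values lie in the band, so the bounds hold.

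The second key step is concentration. Conditionally on $\mathcal U_{n-1}$, the lower variables $\ind\{U(x,n)\le\underline p_n(x)\}$, $x\in B_R(y)$, are independent Bernoulli. A Hoeffding/Chernoff bound then gives that their average is at least $\xi_n^-(y)$ with probability at least $1-Ce^{-cV_R}$, and similarly the upper average is at most $\xi_n^+(y)$. This mirrors Lemma~\ref{lem:propagation_xi}, and uses $\xi_n^-\ge\varepsilon$ on the support to make the relative gap $\delta\xi_n^-$ a fixed positive amount. Sandwiching then gives $\delta_R(y,F_n(\zeta,U))\in[\xi_n^-(y),\xi_n^+(y)]$ simultaneously for all $\zeta$. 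Outside $\Supp(\xi_n^-)$ the bounds are trivial ($0$ and $1$).

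Finally I take a union bound over the points $y\in\Supp(\xi_n^-)$, whose number is at most of order $V_{T_\bl\ceil{sR}}$ up to the additive $L_\bl+m_\bl R+\ceil{wR}$ terms, absorbed into $C$. I also take a union bound over $n\le T_\bl$, which yields the stated bound.

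The main obstacle is the first step: justifying that the pointwise inf/sup over $\zeta$ still fits the hypothesis of~\ref{profile:iii}. I would verify this carefully. If needed, I would replace inf/sup by min/max, which exist because only finitely many restrictions to the relevant finite window matter.
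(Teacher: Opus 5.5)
Your proposal is correct and follows essentially the paper's argument. Both sandwich $F_n(\zeta,U)$ between two families of independent Bernoulli variables that do not depend on $\zeta$, use property (iii) of Definition~\ref{def:density_profiles} (which indeed allows arbitrary functions in the band) to control their means, and finish with a concentration bound and a union bound over sites and $n\le T_\bl$.

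The only difference is in the choice of extremes. The paper uses the deterministic $\varphi^\pm(y)$, the min/max of $\varphi_\mu$ over $[\xi^-(y),\xi^+(y)]$, so the bounding sums are unconditionally independent and the pointwise inf/sup over $\zeta$ that you flag as the main obstacle never arises. Your $\mathcal U_{n-1}$-measurable extremes with a conditional Hoeffding bound work equally well, and they make explicit the induction over $n$ that the paper leaves implicit.
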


\begin{proof}
  For every $y \in \Z$ define
  \begin{align*}
    \varphi^-(y)
    &\coloneq \min \{ \varphi_\mu(u) : u \in [\xi_0^-(y), \xi_0^+(y)]\}
    = \varphi_\mu \big(  \argmin_{[\xi_0^-(y), \xi_0^+(y)]} \varphi_\mu \big)
  \end{align*}
  and $\varphi^+$ analogously by replacing $\min$ and $\argmin$ with $\max$
  and $\argmax$. Let also
  \begin{equation*}
    \delta_R^\pm(x,U)
    = \frac{1}{V_R} \sum_{y \in B_R(x)} \ind_{U(y,1) \le \varphi^\pm(y)},
    \quad x \in \Z.
  \end{equation*}
  Take now any $\zeta \in G_\loc^\eta(0)$, such that
  $\delta_R(y,\zeta) \in [\xi_0^-(y), \xi_0^+(y)]$ for every $y \in \Z$ (note
    that in the definition of $G_\loc^\eta(0)$ this is required only for
    $y \in \Supp(\xi_0^-)$, but since $\xi_0^-=0$ and $\xi_0^+=1$ outside of
    this set, the density is automatically in $[\xi_0^-,\xi_0^+]$).
  Then
  \begin{equation*}
    \delta_R^-(\cdot,U) \le \delta_R(\cdot , F_1(\zeta,U))
    = \frac{1}{V_R} \sum_{y \in B_R (\cdot)} \ind_{U(y,1) \le
      \varphi(\delta_R(\cdot, \zeta))}
    \le \delta_R^
    +( \cdot ,U). \end{equation*}
 We deduce that
  \begin{align*}
    &\P\big( \exists \zeta \in G_\loc^\eta(0),
      \exists x \in \Supp(\xi_1^-) : \delta_R(x,F_1(\zeta,U))
      \notin [\xi_1^-(x), \xi_1^+(x)]\big)
    \\&\le \P( \exists x \in \Supp(\xi_1^-) : \delta_R^-(x,U) < \xi_1^-(x)
      \text{ or } \delta_R^+(x,U) > \xi_1^+(x)).
  \end{align*}
  Furthermore, by definition of $\argmin$, $\argmax$ and
  comparison density profiles
  (see~\ref{def:density_profiles}\ref{profile:iii}), it holds that
  \begin{equation*}\label{eqn:transfercdp}
    (1+\delta) \xi_{1}^-(x)
    \le \E[\delta_R^-(x,U)] \le \E[\delta_R^+(x,U) ]
    \le (1-\delta) \xi_{1}^+(x)
    \quad \text{for all } x \in \Supp(\xi_1^-).
  \end{equation*}
  This implies that
  \begin{equation*}
    \P\big( \delta_R^-(x,U) < \xi_1^-(x) \big)
    \le \P \big( \delta_R^-(x,U)
      -\E[\delta_R^-(x,U)]< -\delta \xi_1^-(x) \big),
  \end{equation*}
  which is smaller than $Ce^{-c V_R}$ by standard concentration inequalities
  for sums of centred, independent, and bounded random variables. A similar bound holds for
  $\P(\delta_R^+(x,U) > \xi_1^+)$, and the statement of the lemma follows
  from a union bound over $x$ and $n=1,\dots,T_\bl$.
\end{proof}

Proposition~\ref{prop:goodblocks} is an almost immediate consequence of
Lemmas~\ref{lem:disagreement} and \ref{lem:propagation_xi_uniform}. We
only need to give bounds on $h$ and check that the error term in
Lemma~\ref{lem:disagreement} is small.

\begin{proof}[Proof of Proposition~\ref{prop:goodblocks}]
  First observe that
  \begin{equation}
    \label{eqn:sufficient4Ygood}
    \{ F_{n}(\zeta,U)(x)=F_{n}(\zeta',U)(x) \ \forall (x,n) \in A_{h_1,T_\bl},
      \ \forall \zeta, \zeta' \in G_\loc^\eta(0) \}  \cap G_{T_\bl}
    \subseteq \{ Y(0,0)=1 \}.
  \end{equation}
  To see this, note that (i) in the definition of $G^U$ holds by definition
  of $G_{T_\bl}$. Furthermore,
  $L_\bl/2 + T_\bl \ceil{ sR } \ge L_\bl/2 + 3L_\bl \ge 3 L_\bl$, so
  $B_{3 L_\bl}(0) \times \{T_\bl\}\subseteq A_{0,T_\bl}$. This
  shows that the first part of (ii) holds. On the other hand, for all
  $\zeta,\zeta' \in G_\loc^\eta(0)$ such that $\zeta(x)=\zeta'(x)$ for every
  $x \in B_{2L_\bl}(0)$, by definition of $h_1$ it holds that
  \begin{equation*}
    F_{n}(\zeta,U)(x) = F_{n}(\zeta',U)(x)
    \quad \forall x \in B_{L_\bl/2}(0), \ n=1,\dots,h_1
  \end{equation*}
  (in fact, on the larger `green' triangle defined in~\eqref{eqn:green}, see
    Figure~\ref{fig:blockb}). Since
  $B_{L_\bl/2}(0) \times \{ h_1+1,\dots,T_\bl\} \subseteq A_{h_1, T_\bl}$, we
  conclude that on the event on the left-hand side
  of~\eqref{eqn:sufficient4Ygood} the second part of (ii) in the definition
  of $G^U$ holds.

  Now that we showed~\eqref{eqn:sufficient4Ygood}, we immediately deduce by
  Lemma~\ref{lem:propagation_xi_uniform} and Lemma~\ref{lem:disagreement} that
  \begin{equation*}
    \P(Y(0,0)=0)
    \le C V_{T_\bl \ceil{sR}} T_\bl e^{-c V_R} + \abs{A_{h_1,T_\bl}} \kappa^h.
  \end{equation*}
  It only remains to check that the second term tends to zero as
  $R \to \infty$. We first show that $h=h_1-h_0$ is of order $\log R$. Note that
  \begin{equation}
    \label{eqn:h0}
    h_1 \coloneq \floor[\Big]{
      \sup \{ k \in \N : L_\bl/2 + \ceil{sR} k \le 2L_\bl - Rk \}}
    = \floor[\bigg]{\frac{3L_\bl}{2(\ceil{sR}  +R)}}
  \end{equation}
  and
  \begin{equation*}
    h_0 \coloneq \ceil[\Big]{
      \sup \{ k \in \N : L_\bl + \ceil{sR}k \le 2L_\bl - Rk \}}
    = \ceil[\bigg]{\frac{L_\bl}{\ceil{sR}+R}}.
  \end{equation*}
  Recall that we define $L_\bl= 16 \ceil{R \log R} /\abs{\log \kappa}$ and
  $T_\bl = \ceil{ 3 L_\bl / \ceil{sR} }$. Furthermore, from
  Lemma~\ref{lem:xi_cdp} we have that $s \in (0,1)$ and so
  $\ceil{sR} + R \le 2R$. We easily obtain that
  \begin{align*}
    h =h_1 - h_0
    \ge \frac{3L_\bl}{2(\ceil{ sR } +R)}
    -\frac{L_\bl}{\ceil{ sR } +R} -2
    \ge \frac12 \cdot \frac{L_\bl}{2R} -2
    \ge \frac{4}{\abs{\log \kappa}} \log R-2.
  \end{align*}
  On the other hand $\abs{A_{h_1,T_\bl}}$ is essentially the area of the red trapezium in Figure~\ref{fig:blockb}, 
  so (using that $\ceil{ sR } T_\bl \le 3L_\bl$) we obtain
  \begin{align*}
    \abs{A_{h_1,T_\bl}}
    & \le 2 \cdot \frac12 ( L_\bl + \ceil{ sR } T_\bl )T_\bl
    \le 4 L_\bl T_\bl
    = O(R (\log R)^2).
  \end{align*}
  We have thus shown that $\abs{A_{h_1,T_\bl}} \kappa^h$ can be bounded with
  a term whose leading order is
  \begin{equation*}
    \exp \Big( \log R+ \frac{4}{\abs{\log \kappa}} \log R
      \cdot \log \kappa \Big).
  \end{equation*}
  which tends to zero as $R \to \infty$. This concludes the proof.
\end{proof}

\subsection{Assumptions \ref{ass:irreducibility}
  and \ref{ass:coupling}: local irreducibility}
\label{sec:BARWass4and5}

Assumptions \ref{ass:irreducibility} and \ref{ass:coupling} are easy
consequences of the finite range property of the model and the properties of
the Poisson distribution. The main observation is that there is
$\tilde\varepsilon >0$ such that for every fixed configuration
$\xi\in \{0,1\}^{B_{R}(0)}$ and for every initial condition $\zeta$
containing a particle at $0$
\begin{equation*}
  \P_\zeta \big( \eta_1(x) = \xi(x) \text{ for all } x\in B_R(0)\big)
  \ge \tilde\varepsilon.
\end{equation*}
This will be applied iteratively in the proof of these
assumptions.

Recall that Assumption~\ref{ass:irreducibility} requires that there are
$\varepsilon_{\ref{ass:irreducibility}} >0$ and
$t_{\ref{ass:irreducibility}} \in \L_\ttime$ such that for every
$\zeta \in E^\Z $  containing at least one particle in $ B_{L_\bl}(0)$,
\begin{equation}
  \mathbb P_\zeta\big(
    \eta_{t_{\ref{ass:irreducibility}}}\in G^\eta_\loc(0) \big)
  \ge \varepsilon_{\ref{ass:irreducibility}}.
\end{equation}
The event $\eta_{t_{\ref{ass:irreducibility}}} \in G^\eta_\loc(0)$ only
depends on the values of
$\{\eta_{t_{\ref{ass:irreducibility}}}(x):x\in B_{KL_\bl}\}$. Moreover, as
explained under \eqref{eqn:calU}, these values only depend on
$\{\eta_0(y):y \in B_{KL_\bl+R t_{\ref{ass:irreducibility}}}(0)\}$ (and
\(
  \{U(y,k): 1\le k \le t_{\ref{ass:irreducibility}},
    y \in B_{KL_\bl+R(t_{\ref{ass:irreducibility}}-k)} \}
\)).
Therefore, although the set of initial configurations considered in
Assumption~\ref{ass:irreducibility} is uncountable, effectively we only have
to deal with a finite set of initial configurations.

To prove Assumption~\ref{ass:irreducibility}, we fix an arbitrary
$\xi \in G^\eta_\loc(0)$ and
set $t_4 = \bbceil{2 KL_\bl/R}$, so that
the initial particle located in $B_{L_\bl}(0)$ can put an offspring anywhere
in $B_{KL_\bl}(0)$ in $t_4$ steps. For any initial condition
$\{\zeta (y): y\in B_{KL_\bl+R t_{\ref{ass:irreducibility}}}(0)\}$ we fix a
`path'
\(
  \big(\zeta_t|_{B_{KL_\bl+R t_{\ref{ass:irreducibility}}}(0)}:
    0\le t \le t_{\ref{ass:irreducibility}}\big)
\)
which is realisable for the model so that
$\zeta_0|_{B_{KL_\bl+Rt_{\ref{ass:irreducibility}}}} = \zeta$ and
$\zeta_{t_{\ref{ass:irreducibility}}}|_{B_{KL_\bl}}=\xi$ (e.g., for
  $1\le t < t_{\ref{ass:irreducibility}}$, we can choose $\zeta_t(x) =1 $ if
  $\zeta$ contains a particle in $B_{tR}(x)$). By the above observation, the
probability that the system follows exactly this path is at least
\begin{equation*}
  \tilde \varepsilon^{t_{\ref{ass:irreducibility}} \cdot \abs{B_{KL_\bl+
        t_{\ref{ass:irreducibility}}R}(0)}}\eqcolon
  \varepsilon_{\ref{ass:irreducibility}}>0
\end{equation*}
(in fact, the exponent is much larger than needed, but provides a lower bound).
This implies the claim of Assumption~\ref{ass:irreducibility}.

Assumption~\ref{ass:coupling} requires that there are
$\varepsilon_{\ref{ass:coupling}} >0$ and
$t_{\ref{ass:coupling}} \in \L_\ttime$ such that for every $\zeta^1$,
$\zeta^2$ such that $\zeta^i(z) =0$ for all $z >0$ and
$\{x:\zeta^i(x)\neq 0\}\cap B_{L_\bl}(0)\neq \emptyset$, $i=1,2$, there is a
coupling $\mathbb P_{\zeta^1,\zeta^2}$ of processes $\eta^1$,
$\eta^2$ started from $\zeta^1$ and $\zeta^2$ respectively, so that
\begin{equation*}
  \mathbb P_{\zeta^1,\zeta^2}\big(\eta^1_{t_5} \in G_\loc^\eta(0),
    \eta^1_{t_5}(z) = \eta^2_{t_5}(z) \ \forall z\ge - K
    L_\bl\big)\ge \varepsilon_5.
\end{equation*}

To prove this assumption, we can let $\eta^1$ and $\eta^2$ run independently
under $\P_{\zeta^1,\zeta^2}$. As in the proof of
Assumption~\ref{ass:irreducibility}, we fix
$t_{\ref{ass:coupling}} = t_{\ref{ass:irreducibility}}$, an arbitrary
$\xi \in G_\loc^\eta(0)$, and two paths $\zeta^1_t$, $\zeta^2_t$,
$0\le t \le t_{\ref{ass:coupling}}$, linking $\zeta^1$ or $\zeta^2$ to $\xi$,
respectively. By the same arguments as above, the claim of
Assumption~\ref{ass:coupling} then holds with
$\varepsilon_{\ref{ass:coupling}} = \varepsilon_{\ref{ass:irreducibility}}^2$.

\section{Discussion and outlook}
\label{sec:outlook}

We briefly discuss other models within the scope of our main results,
possible extensions, and future directions.

\subsection{Further examples}
\label{sec:othermodels}

For the following examples, Assumptions~\ref{ass:flow}--\ref{ass:coupling}
can be verified and thus analogues of Theorems~\ref{thm:shape1} and
\ref{thm:real_shape} hold for them.

\begin{example}[Other probabilistic cellular automata]
  \label{ex:generalphi}
  The specific form of the function $\varphi_\mu$ from \eqref{eqn:phi} in
  Section~\ref{sec:barw} is crucial for the interpretation of the process
  $\eta$ as a branching annihilating random walk (see the discussion on
    p.~\pageref{page:nameBARWexplained}).  On the other hand, inspection of
  the arguments from Section~\ref{sec:barw} shows that in the explicit
  definition of the dynamics in \eqref{eqn:barwass1}, the function
  $\varphi_\mu$ may be replaced by any continuously differentiable function
  $\widetilde{\varphi} : [0,1] \to [0,1]$ which is strictly positive on
  $(0,1]$, such that $\widetilde{\varphi}(0)=0$, $\widetilde{\varphi}'(0)>1$
  and there is a unique attracting fixpoint $\theta \in (0,1)$.
  Similarly, the requirement that the offspring particles are uniformly
  distributed over an interval of size $R$ around the parent particle is not
    essential. See also \cite[Remark~1.9]{BCC24}.
\end{example}
\smallskip

As briefly discussed in Section~\ref{sec:barw} (and in more detail in
\cite{BCC24}, see in particular Section~1.4 there), the branching
annihilating walk from Section~\ref{sec:barw} can be viewed as a
discrete stochastic approximation or `perturbation' of a
(deterministic) coupled map lattice
\begin{equation}
  \label{eq:CML}
  \Xi_{n+1}(x) = \varphi_\mu\big( \delta_R(x, \Xi_n) \big),
\end{equation}
compare this with \eqref{eqn:celauto} and \eqref{eqn:celautoapr}. One can
similarly consider variants where the deterministic iteration \eqref{eq:CML}
is subjected to a (possibly continuously distributed, but suitably small)
random perturbation and the local state space is thus $E=\R_+$. Non-spatial
versions of such systems are studied, e.g.,~in \cite{KlebanerLiptser1999},
\cite{KlebanerZeitouni1994}, \cite{KlebanerLazarZeitouni1998}.  They are of
the form
$X^\varepsilon_{n+1} = \varphi(X^\varepsilon_n) + \varepsilon
\eta^\varepsilon_n(X^\varepsilon_n,U_n)$ where the pertubation function can
be fairly general.

For definiteness, we formulate a specific example %of such a system
explicitly:

\begin{example}[A continuous stochastic perturbation of a coupled map lattice]
  \label{ex:continousnoise}
  Let $E=\R_+$,
  $\eta_n=(\eta_n(x) : x \in \Z)$ takes values in
  $E^\Z$. Its dynamics is given by
  \begin{equation}
    \label{eq:CML+normalperturb}
    \eta_{n+1}(x)
    = \max\Big\{ \varphi_\mu\big( \delta_R(x, \eta_n) \big)
      + \varepsilon g\big( \delta_R(x, \eta_n) \big) Z_{n+1}(x), 0 \Big\},
    \quad x \in \Z, n \in \N_0
  \end{equation}
  where $\varphi_\mu $, $\delta_R(x,\eta_n)$ are as in \eqref{eqn:phi},
  \eqref{eqn:density}, and the random variables
  $(Z_{n}(x): x \in \Z, n \in \N)$ are i.i.d.~standard normals. In
  \eqref{eq:CML+normalperturb}, we assume $\varepsilon>0$ and
  $g : [0,\infty) \to [0,\infty)$ is a bounded function with the properties
  \begin{equation}
    \label{eq:g.prop2}
    \begin{split}
      &g(0) = 0, \;\; g(w) > 0 \quad \text{for } w>0,\\
      & g \text{ is continuous on } (0,\infty)
      \text{ with $\lim_{w\downarrow 0} g(w)>0$}.
    \end{split}
  \end{equation}
  Thus, given $\eta_n$, $\eta_{n+1}(x)$ is a (truncated) normal random
  variable whose mean and variance depend on the local density in the
  previous generation.

  For $\mu \in (1,e^2)$, there exist $R_0 \in \N$ and $\varepsilon_0 >0$ such
  that for $R \geq R_0$ and $\varepsilon \in (0,\varepsilon_0]$, the system
  \eqref{eq:CML+normalperturb} satisfies
  Assumptions~\ref{ass:flow}--\ref{ass:coupling}. We sketch briefly below how
  the arguments from Section~\ref{sec:barw} can be adapted to show this. We
  remark that the requirement $g(0+)>0$ in \eqref{eq:g.prop2} is possibly a
  little artificial from the modelling point of view. This will be needed in
  particular in order to verify Assumption~\ref{ass:irreducibility}.
  Analogous to Example~\ref{ex:generalphi}, the function $\varphi_\mu$ in
  \eqref{eq:CML+normalperturb} can be replaced by a function
  $\widetilde{\varphi}$ with the properties stated there. \smallskip

  Note that \eqref{eq:CML+normalperturb} means that given $\eta_n$, the
  $\eta_{n+1}(x)$ are normal with mean
  $\varphi_\mu\big(\delta_R(x, \eta_n)\big)$ and (small) variance
  $\varepsilon^2 g^2\big( \delta_R(x, \eta_n) \big)$, truncated at $0$, and
  they are conditionally independent for different $x$'s. The mean of a
  truncated normal is of course not literally the same as that of its
  untruncated version. However, the difference is very small when the mean is
  much smaller than the standard deviation and we will ignore it for the
  purposes of the following sketchy arguments.

  \paragraph{Comparison density profiles.}
  The proof Lemma~\ref{lem:propagation_xi} (\cite[Lemma~2.3]{BCC24}) given in
  \cite{BCC24} relied on a Bernstein inequality, which leveraged the fact
  that in the setting of Section~\ref{sec:barw}, conditioned on $\eta_n$,
  $\delta_R(x, \eta_{n+1})$ is a scaled sum of independent Bernoullis. By
  \eqref{eq:CML+normalperturb}, in the setting of
  Example~\ref{ex:continousnoise}, $\delta_R(x, \eta_{n+1})$ given $\eta_n$
  is a scaled sum of (truncated) normals. Since truncated normals (also) have
  sub-gaussian tail bounds, the analogue of Lemma~\ref{lem:propagation_xi}
  holds when $R$ is large enough and $\varepsilon$ is small. The construction
  of comparison profiles can then essentially be copied from
  Sections~\ref{sec:tools} and~\ref{sec:blocks}.

  \paragraph{Couplings.}
  The obvious analogue of the flow construction \eqref{eqn:F_BARW} (or
    \eqref{eqn:barwass1}) for BARW is to simply use
  \eqref{eq:CML+normalperturb} (and to literally satisfy
    Assumption~\ref{ass:flow}, put
    $Z_{n+1}(x) \coloneqq \Phi^{-1}(U_{n+1}(x))$ where the $U_{n+1}(x)$ are
    i.i.d.\ uniform on $[0,1]$ and $\Phi^{-1}$ is the quantile function of
    the standard normal). This works well to leverage the contraction
  properties of $\varphi_\mu$ in combination with the comparison density
  profiles as in Section~\ref{sec:barw} and thus to bring the density of
  $\eta_n$ close to $\theta_\mu$ with high probability.  However, unlike the
  case of a discrete local state space (as in the BARW in
    Section~\ref{sec:barw}), in the continuous setting of
  Example~\ref{ex:continousnoise}, this coupling is unable to create literal
  agreement of configurations within a finite number of steps.

  In order to achieve the latter (as required in
    Assumptions~\ref{ass:coarse}(c),(d) and~\ref{ass:coupling}) we use the
  following observation: Denote by $f_{\mu,\sigma^2}$ the normal density with
  mean $\mu$ and variance $\sigma^2$. For (suitably tuned) small
  $\widetilde{\varepsilon}>0$ and $0 < \widetilde{\delta} \ll 1$, we have
  \begin{align}
    \widetilde{c} \coloneqq \int_{-\infty}^\infty
    \min_{w \in [\theta_\mu \pm \widetilde{\varepsilon}]}
    f_{\varphi_\mu(w),\varepsilon^2 g^2(w)}(x) \, dx
    \geq 1 - \widetilde{\delta}.
  \end{align}
  Note that if $Z_i$ is normal with mean $\varphi_\mu(w_i)$ and variance
  $\varepsilon^2 g^2(w_i)$, $i=1,2$, and
  $w_1, w_2 \in [\theta_\mu \pm \widetilde{\varepsilon}]$, then the total
  variation distance between laws of $Z_1$ and $Z_2$ is at most
  $1-\widetilde{c}$.

  Thus, we augment the flow construction \eqref{eq:CML+normalperturb} as
  follows:
  \begin{itemize}
    \item
    Independently for each space-time site $(x,n+1)$, let $B(x,n+1)$ be a
    $\mathrm{Bernoulli}(\widetilde{c})$ random variable. If $B(x,n+1)=1$,
    draw $\widetilde{Z}_{n+1}(x)$ with density
    \begin{equation*}
      \widetilde{c}^{-1} \min_{w \in [\theta_\mu \pm \widetilde{\varepsilon}]}
      f_{\varphi_\mu(w),\varepsilon^2 g^2(w)}(x).
    \end{equation*}
    In addition, let $Z_{n+1}(x)$ be a standard normal random variable,
    independent of everything else.

    \item
    Now for any $\eta_n$ put
    $D \coloneqq D(x, \eta_n) \coloneqq \delta_R(x, \eta_n)$.
    \begin{enumerate}[(a)]
      \item
      If $D \not\in [\theta_\mu \pm \widetilde{\varepsilon}]$,
      use \eqref{eq:CML+normalperturb} to define $\eta_{n+1}(x)$
      with $Z_{n+1}(x)$ as above.

      \item If $D \in [\theta_\mu \pm \widetilde{\varepsilon}]$ and
      $B(x,n+1)=1$, put
      $\eta_{n+1}(x) \coloneqq \max\{ \widetilde{Z}_{n+1}(x), 0 \}$.

      \item If $D \in [\theta_\mu \pm \widetilde{\varepsilon}]$ and
      $B(x,n+1)=0$, with
      \begin{equation*}
        h_D(x) \coloneqq f_{\varphi_\mu(D),\varepsilon^2 g^2(D)}(x)
        - \min_{w \in [\theta_\mu \pm \widetilde{\varepsilon}]}
        f_{\varphi_\mu(w),\varepsilon^2 g^2(w)}(x), \quad x \in \R,
      \end{equation*}
      draw $Z_D'$ from the density $h_D(x)/\int_\R h_D(y)dy$. (This can at
        least in principle be achieved by inserting a uniform $[0,1]$ random
        variable $U(x,n+1)$ into the corresponding quantile function.) Then
      put $\eta_{n+1}(x) \coloneqq \max\{ Z'_D, 0 \}$.
    \end{enumerate}
  \end{itemize}

  By construction, starting from any set of initial configurations with
  `reasonable' local densities, this coupling creates local agreement with
  high probability. It can be used to transfer Lemmas~\ref{lem:disagreement}
  and~\ref{lem:propagation_xi_uniform} to the setting of
  Example~\ref{ex:continousnoise}, which in turn yield
  Assumptions~\ref{ass:coarse} and~\ref{ass:domination}.

  \paragraph{Reaching good configurations with uniformly positive probability.}
  Assumption~\ref{ass:irreducibility} follows easily because by
  \eqref{eq:g.prop2}, when $\delta_R(x, \eta_n)$ is much smaller than
  $\varepsilon g(0+)$, the noise dominates the (possibly extremely small)
  term $\varphi_\mu\big( \delta_R(x, \eta_n) \big)$ in
  \eqref{eq:CML+normalperturb}. Thus, a minimal local density of the order
  $\varepsilon g(0+)$ can be reached within a few steps with uniformly
  positive probability from any non-zero initial condition.

  \paragraph{Creating agreement at the tip.}
  In order to verify Assumption~\ref{ass:coupling} note that we can use the
  coupling sketched above to create a large region of agreement within a
  finite number of steps with positive probability and then use the fact that
  by \eqref{eq:g.prop2} we have a uniform lower bound on the `noise strength'
  to enforce that at all the sites to the right of the agreement region, the
  density hits $0$ in the next step.

\end{example}

\subsection{Higher dimensions}
\label{sect:d>1}

Our results hold only in dimension $d=1$. It is natural to ask whether it is
possible to obtain similar results for $d\ge 2$.

Much of our method is, in fact, dimension-independent: the arguments of
Sections~\ref{sec:percol} and~\ref{sec:liverpool} carry over with essentially
no change to any dimension $d \ge 1$. The purely one-dimensional argument is
the coupling of the fronts that we construct in Section~\ref{sec:mainproof}.
It has no direct analogue in dimension $d\ge 2$, where the front of the
population has a considerably more complicated geometry, and comparing the
fronts of two copies of $\eta$ remains a challenging problem and a topic of
future research.

In the context of the BARW, Assumptions~\ref{ass:flow}--\ref{ass:irreducibility}
hold in any dimension $d \ge 1$ when $\mu \in (1,e^2)$ and $R$ is large
enough. Indeed, the definition of good blocks (and the comparison with
  percolation) was already introduced in~\cite{BCC24} in any dimension. In
fact, for Assumptions~\ref{ass:flow}, \ref{ass:coarse}(a,b),
\ref{ass:domination} and~\ref{ass:irreducibility} the restriction to
$\mu < e^2$ is not necessary: they hold for any $\mu > 1$, $d \ge 1$ and $R$
large enough. On the other hand, Assumption~\ref{ass:coupling} is specific to
dimension one and would need to be replaced by a suitable higher-dimensional
analogue for $d\ge 2$.

\subsection{Stochastic travelling waves, fluctuations}

Theorem~\ref{thm:shape1} shows that, starting from a single particle, the
right boundary 
\[
M_n = \max\{ x\in \Z : \eta_n(x) \neq 0 \}
\]
satisfies a law
of large numbers, $M_n/n \to v$ almost surely on the event of survival. This
raises several natural questions.

\begin{itemize}
  \item How does $M_n$ fluctuate around its mean? We conjecture that, for
  $d=1$, $(M_n - vn)/\sqrt{n}$ converges in distribution to a centred normal
  law with non-trivial variance.

  \item Does the interface near the tip stabilise? More precisely, does
  the law of $\eta_n(M_n + \cdot)$ converge as $n\to\infty$?
\end{itemize}

We expect that both questions can be approached by a regeneration argument
extending the construction in~\cite{Kuczek1989} for (monotone) oriented
percolation to our setting. Carrying this out is, however, non-trivial, and
we leave it to future work. Indeed, the `restart' arguments of
Section~\ref{sec:mainproof} can be used to define space-time contours that
separate the past from the future of $\eta$ seen from its right tip; but
while the times at which such contours arise have a regeneration flavour,
the resulting increments are not literally i.i.d., so that deducing a
central limit theorem would require establishing suitable mixing properties.

\bibliographystyle{jcamsalpha}
\bibliography{shapetheorem.bib}

\end{document}